\newtheorem{theorem}{Theorem}[section]
\newtheorem{lemma}[theorem]{Lemma}
\newtheorem{corollary}[theorem]{Corollary}
\newtheorem{proposition}[theorem]{Proposition}
\newtheorem{remark}[theorem]{Remark}
\theoremstyle{definition}
\newtheorem{definition}[theorem]{Definition}
\numberwithin{equation}{section}
\newcommand{\remove}[1]{}
\newcommand{\ra} {\rightarrow}
\newcommand{\RR} {\mathbb{R}}
\newcommand{\DD} {\displaystyle}
\newcommand{\la} {\lambda}
\newcommand{\Om}{\Omega}
\theoremstyle{remark}
\newcommand{\R}{\mathbb R^N\times\mathbb R^N}
\newcommand{\A}{\int_{\mathbb R^N}\int_{\mathbb R^N}\frac{1}{p(x,y)}\frac{|u(x)-u(y)|^{p(x,y)}}{|x-y|^{N+s(x,y)p(x,y)}}dxdy}
\newcommand{\B}{\DD\int_{\Omega}V(x)\frac{|u(x)|^{\overline{p}(x)}}{\overline{p}(x)}dx}
\newcommand{\al}{\alpha}
\newcommand{\ba}{\beta}
\newcommand{\p}{\overline{p}(x)}
\newcommand{\F}{\int_{\Om}\int_{\Om}\frac{F(x,u(x))F(y,u(y))}{|x-y|^{\mu(x,y)}}dxdy}
\newcommand{\f}{\int_{\Om}\int_{\Om}\left(\frac{F(y,u(y))}{|x-y|^{\mu(x,y)}}dy\right)f(x,u(x))u(x)dx}
\newcommand{\rr}{(x,t)\in\Om\times \mathbb R}
\newcommand{\br}{\in\mathbb R}
\newcommand{\gr}{\frac{\theta p^+}{2}}
\newcommand{\noi}{\noindent}
\newcommand {\n}{\nonumber\\}
\newcommand{\gt}{n\to\infty}
\newcommand{\ol}{\overline}
\newcommand{\e}{\epsilon}
\title{On a class of Kirchhoff-Choquard equations involving variable-order fractional $p(\cdot)-$ Laplacian and without Ambrosetti-Rabinowitz type condition}
\author{{Reshmi Biswas\thanks{ Email: b.reshmi@iitg.ac.in} } and {Sweta Tiwari\thanks{Email: swetatiwari@iitg.ac.in}}
	\\{\small Department of Mathematics}, {\small Indian Institute of Technology Guwahati,}\\{\small Guwahati, Assam 781039, India. }}
\date{}
\begin{document}

\maketitle

\begin{abstract}
\noi	In this article we study the existence of weak solution, existence  of ground state solution
using Nehari manifold and existence of infinitely many solutions using Fountain theorem and Dual fountain theorem for a class of doubly nonlocal Kirchhoff-Choquard type equations
involving the variable-order fractional $p(\cdot)-$ Laplacian operator. Here the nonlinearity does not satisfy the well known  Ambrosetti-Rabinowitz type condition.
\end{abstract}

\noi {\bf Keywords}:   Kirchhoff-Choquard equation; Variable order fractional $p(\cdot)$- Laplacian;\\ Fountain theorem; Dual fountain theorem; Nehari manifold; Ambrosetti-Rabinowitz type condition\\

\noi {\bf Mathematics subject classification:}	35J60; 35R11; 35A15; 46E35
	\section{Introduction }
	\noi The purpose of this article is to study the following Kirchhoff-Choquard type problem:
{\begin{equation}\label{mainprob}
	\;\;\;	\left.\begin{array}{rl}
	& m\bigg(\DD\A+\B\bigg)\n	&~~~~~~~~~~~~~~\left[(-\Delta)_{p(\cdot)}^{s(\cdot)}u+V(x)|u|^{\p-2}u\right] =\left(\DD\int_\Om\frac{F(y,u(y))}{|x-y|^{\mu(x,y)}}dy\right)f(x,u),\hspace{4mm}
	x\in \Om,\\\\
	&\hspace{6.7cm}u=0,~~~~~~~~~~~~~~~~~~~~~~~~~~~~~~~~x\in \RR^N\setminus \Om,
	\end{array}
	\right\}
	\end{equation}}
where $m:\RR_0^+\to\RR_0^+,$  $V:\Om\to\RR_0^+,$ $\mu:\R\to(0,N),$ $f:\Om\times\RR\to\RR$ $p:\R\to(1,\infty)$ and $s:\R\to(0,1)$ are  continuous functions, $\p:=p(x,x),$ $\Om\subset\RR^N$ is  a smooth bounded   domain and $F$ is the primitive of $f.$ The nonlocal operator  $(-\Delta)_{p(\cdot)}^{s(\cdot)}$ is defined  as
{\begin{equation}\label{operator}
(-\Delta)_{p(\cdot)}^{s(\cdot)} u(x):=  P.V.
\int_{\RR^N}\frac{\mid
	u(x)-u(y)\mid^{p(x,y)-2}(u(x)-u(y))}{\mid
	x-y\mid^{N+s(x,y)p(x,y)}}dy, ~~x \in \RR^N,
\end{equation}}
where P.V. denotes the Cauchy's principal value. Note that such kind of operators are non-homogeneous in nature. If $p(x,y)=p,~s(x,y)=s$ are constants then \eqref{operator} is reduced to nonlocal fractional $p-$Laplacian. 
The fractional Sobolev spaces and the corresponding nonlocal equations involving nonlocal operator have major applications
to various nonlinear problems, including phase transitions, thin obstacle problem, crystal dislocation, soft thin films, minimal surfaces, material science, etc.
 (see for e.g., \cite{caff2} and the references there in for more details).
We also refer  to the  monograph \cite{bisci} and \cite{hitchhiker} for the detailed study of the nonlocal fractional $p-$Laplacian with $1<p<\infty,$
its properties and problems involving it.\\	
 Motivated by the application of variable exponent Lebesgue and Sobolev spaces (see \cite{diening,radulescu2,radulescu1}) in the study of
the flow of non-Newtonian fluid (for example electrorheological fluids) or the image restoration ( see for e.g., \cite{ant,chen}),
Kaufmann et al. (\cite{kaufmann})   introduced 
fractional Sobolev spaces with variable exponents. Then R\u{a}dulescu, Bahrouni, Ho, Kim (\cite{bahrouni-jmaa,bahrouni-dcds,ky-ho})  studied the extensive properties of such spaces and associated problems involving the operator \eqref{operator} when $s(\cdot,\cdot)=s,$ constant.	For $p(\cdot,\cdot)=2,$ Zhang et al. (\cite{new})  discussed  existence results for  problems involving  variable-order fractional Laplacian operator $(-\Delta)^{s(\cdot)}$.
Recently,
Biswas and Tiwari (\cite{rs})  studied   problems  involving the nonlocal operator \eqref{operator} in  fractional Sobolev spaces with variable order and variable exponents, which involve not only the  variable exponents
but also  variable order.\\	
The Choquard type of nonlinearity 
in \eqref{mainprob} is motivated 
by the work of Pekar \cite{pekar}, 
studying the following nonlinear Schr\"{o}dinger-Newton equation.
{\begin{align}\label{sn}
-\Delta u + V(x)u = ({\mathcal{K}}_\mu * u^2)u +\la f(x, u),
\end{align}}
where $\mathcal{K}_\mu$ denotes is the  Riesz potential.
This type of nonlinearity describes the self gravitational
collapse of a quantum mechanical wave function (see \cite{penrose})
and also plays a key role in the Bose–Einstein
condensation (see \cite{bose}).
For $V(x)=1,\la=0$, the equations of type \eqref{sn} have extensively been studied in  (\cite {lieb,lions,moroz-main,moroz}). We cite \cite{radulescu6,chqd2} for some recent works related to critical Choquard type problems involving local Laplacian.
In the fractional Laplacian set up, Wu (\cite{wu}) investigated existence
and stability of solutions for the equations
{\begin{align}\label{sn1}
(-\Delta)^s u + \omega u = ({\mathcal{K}}_\mu *|u|^q)|u|^{q-2}u + \la f(x,u) ~~\text{in} ~\RR^N ,\end{align}}
where $q = 2, ~\la = 0$ and $\mu\in (N -2s, N).$  
In the
critical case, i.e. $q = 2_{\mu,s}^*:=(2N -{2{\mu}/2})/(N - 2s),$ Mukherjee and Sreenadh (\cite{tuhina}) obtained existence and multiplicity results for
solutions of \eqref{sn1}  in a smooth bounded domain for $w=0$ and $f(x, u) = u.$

Recently Gao et al. (\cite{gao}) investigated the existence of
ground state solution of Pohozaev-type  for the following  problem:
{\begin{align}\label{sn1.0}
(-\Delta)^s u + V(x) u = ({\mathcal{K}}_\mu *F(u))F'(u) ~~\text{in} ~\RR^N ,\end{align} }
where $V\in C^1(\RR^N,[0,\infty))$ and  $F$  satisfies  general Berestycki–Lions-type assumptions.\\
Very recently, Alves et al. (\cite{alves-choquard}) introduced a new kernel 
$A(x, y) :=
\frac{1}{|x - y|^{\mu(x,y)}}$  for $ x, y \in \RR^N$
and then using the properties of that kernel the authors established   Hardy-Littlewood-Sobolev-type
inequality (\cite[Proposition 2.4]{alves-choquard}) for variable exponents.
Analogous to this, in the  nonlocal  setup involving  variable order and variable exponents, Biswas and Tiwari (\cite{rs}) established  a Hardy-Littlewood-Sobolev-type
inequality result   with some  appropriate assumptions and studied the Choquard problem of type \eqref{mainprob} for $m(\cdot)=1, V\equiv0$ 
 and $f\in C(\Om\times\RR,\RR)$ satisfying Ambrosetti-Rabinowitz type condition.\\	
 The  study  of  Kirchhoff-type problems  arise  in  various  models  of  physical  and  biological  systems and hence have received  more attentions in recent years. Precisely, 
Kirchhoff established a model given by the following equation:
{$$\rho\frac{\partial^2 u}{\partial^2t}-\left(\frac{p_0}{h}+\frac{E}{2L}\int_0^L\left|\frac{\partial u}{\partial t}\right|^2dx\right)\frac{\partial^2 u}{\partial^2t}=0,$$ } which extends the classical D’Alembert wave equation by taking into account the effects of the changes in the length of
the strings during the vibrations,
where the constants $\rho, p_0, h, E, L$ 
represent physical parameters of the string.
Subsequently,  using the Nehari manifold and the concentration
compactness principle  in \cite{lu}, L\"{u} studied the Kirchhoff-Choquard-type equation
{\begin{align}\label{sn2}
	\Big(-a+b \int_{\RR^3}
	|\nabla u|^2 dx\Big)\Delta u + V_\la(x)u = ({\mathcal{K}}_\mu * |u|^q)|u|^{q-2}u ~~\text{in}~ \RR^3,\end{align}} where $a \in \RR^+, b\in \RR_0^+,$
$ V_\la(x) = 1 + \la g(x)$, $\la>0$ and $g$ is a continuous
potential function, $q \in (2, 6 -\mu).$ 
Fiscella and Valdinoci in \cite{valdi} first
proposed a stationary Kirchhoff model  involving fractional Laplacian   
 by considering the nonlocal aspect of the tension arising from
nonlocal measurements of the fractional length of the string,  (\cite{valdi}, Appendix A).
Recently, Pucci et al. (\cite{pucci-choquard}) extensively studied the  existence and asymptotic behavior of entire solutions of the following
superlinear Kirchhoff-Schr\"{o}dinger-Choquard equation involving fractional $p-$Laplacian: 
{\begin{align}\label{sck}
M(\|u\|^p_s)[(-\Delta)_s^p u + V(x)|u|^{p-2}u] = \la f(x, u) + ({\mathcal{K}}_\mu* |u|^{p_{\mu,s}^*} )|u|^{p_{\mu,s}^*-2}u ~~\text{ in}~ \RR^N ,\end{align}}
where $M : \RR_0^+ \to\RR_0^+$ is degenerate-type Kirchhoff function, $V : \RR^N \to\RR^+$ is a scalar potential, $p_{\mu,s}^* = (pN -{p{\mu}/2})/(N - ps)$ is the
critical exponent in the sense of Hardy–Littlewood–Sobolev inequality, $f : \RR^N \times \RR  \to \RR$ is a Carath\'{e}odory
function with superlinear growth. Further Liang and R\u{a}dulescu  (\cite{radulescu4}) studied  the existence of infinitely many solutions of the problem \eqref{sck} using  symmetric
mountain pass lemma under some appropriate assumptions on $f$. \\
Motivated by all the above works, 
in problem \eqref{mainprob}, we consider the  
the study of nonlocal Kirchhoff-Choquard type
problems with variable order and variable exponents. Now we fix some notations.
For any domain $\mathcal{D}$ and any function $\Phi:\mathcal{D}\rightarrow\mathbb R$, we set 
{\begin{align*}
	\Phi^{-}:=\inf_{\mathcal{D}} \Phi(x)\text{ ~~~and ~~} \Phi^{+}:=\sup_{ \mathcal{D}}\Phi(x).
	\end{align*}}
\noindent We  define the function space
$$C_+(\mathcal{D}):=\{\Phi\in C(\mathcal{D}, \RR):1 <\Phi^{-}\leq \Phi^{+}<\infty\}.$$
Concerning  the variable order $s$ and the variable exponents $p,\mu$  and the potential function $V$ we assume  the followings:  
\begin{itemize} 
	\item[{\bf(S1)}] $s:\mathbb R^N\times\mathbb R^N\rightarrow\mathbb R$ is a  continuous and symmetric function, i.e., $s(x,y)=s(y,x)$ 
	for all $(x,y)\in \RR^N\times \RR^N$
	with $0<s^-\leq s^+<1$.
	\item[{\bf(P1)}] $p\in C_+(\mathbb R^N\times\mathbb R^N)$ is a continuous and symmetric function, i.e., $p(x,y)=p(y,x)$ for all $(x,y)\in \RR^N\times \RR^N$
	such that $s^+p^+<N.$
	\item[{\bf($\mu$1)}]  $\mu:\R\to\RR$ is a continuous and symmetric function, i.e., $\mu(x,y)=\mu(y,x)$
	for all $(x,y)\in \RR^N\times \RR^N$ with $0<\mu^-\leq \mu^+<N$.
	\item[{\bf(V1)}] $V\in C(\Om,\RR)$ such that $V(x)\geq 0$ for all $x\in\Om.$
\end{itemize}
Through out this article, $p_s^*(x):=\frac{N\p}{N-\ol s(x)\p}$ denotes the Sobolev-type critical exponent, where $\ol s(x):=s(x,x).$
Next, we consider the following assumption on the  Kirchhoff function $m$ in \eqref{mainprob}.
\begin{itemize}\item[{\bf (M1)}]   $m:\RR_0^+\rightarrow \RR_0^+$ is continuous and  defined as $m(t)=a+bt^{\theta-1}, ~a\geq0,b>0$ such that $\theta \in[1,2\frac{p_s^{*-}}{p^+q^+}),$ where $p^+\geq 2$ and $\mu$ satisfies $(\mu1)$ and $q\in C_+(\R)$ verifies 
	{\begin{equation}\label{symetry}
		\frac{2}{q(x,y)} + \frac{\mu(x,y)}{N}=2, \quad \forall x,y \in \RR^N.
		\end{equation}}
\end{itemize}
Here $a=0$ represents the degenerate Kirchhoff equation and $a>0$ represents non-degenerate Kirchhoff equation. 
In case of the degenerate Kirchhoff problems, the transverse
oscillations of a stretched string, with nonlocal flexural rigidity, depends continuously on the Sobolev
deflection norm of $u$ via $m(\cdot)$. The fact $m(0) = 0$ means that the base
tension of the string is zero, a very realistic model from a physical point of view. 
We would like to mention that in this article we study problem \eqref{mainprob} for both the degenerate and non-degenerate cases.
\noi Let $ M(t):=\int_{0}^{t}m(s)ds$ denote the primitive of $m$. Then we have  next two remarks as consequences of $(M1)$.
\begin{remark}\label{main}
	When $a=0,$ we have the following observations:
	\begin{itemize}
		\item For any $\tau>0$, there exists  $m_0:= m_0(\tau)>0$ such that $m(t)\geq m_0$ whenever $t \geq \tau.$
		\item $\theta  M(t)-m(t)t$ is non decreasing for $t>0$ and $\theta  M(t)-m(t)t = 0\;\;\; \text {~for~all~} \; t \geq 0.$
		\item $M(t)=M(1) t^\theta, \text{~~where~} M(1)=\frac{b}{\theta}.$
	\end{itemize}
\end{remark}
\begin{remark}\label{rem-M}When $a>0,$ we have the following observations:
	\begin{itemize}
		\item $m(t)=a+b t^{\theta-1}, a>0$ and $m(t)\geq\inf_{t\geq 0}m(t)=a>0.$
		\item $\theta  M(t)-m(t)t$ is non decreasing for $t>0$ and
		$\theta  M(t)-m(t)t \geq 0\;\;\; \text {~for~all~} \; t \geq 0.$
		\item For each $t \geq 0$
		{ \begin{equation}\label{m4}
			\left\{ \begin{array}{rl}
			M(t)&\geq M(1)t^\theta\; \text{~~~~~~~~~~~for all}\; t \in[ 0,1],\\
			&\leq  M(1)t^\theta\; \text{~~~~~~~~~~~for all}\; t \geq 1,\\
			&\leq M(1)(1+t^\theta)\; \text{~~~~for all}\; t \geq 0,
			\end{array}
			\right.\end{equation}}
		where $M(1)=a+\frac{b}{\theta}.$
	\end{itemize}
\end{remark}
The assumptions that we consider for the nonlinearity $f(x,t)$ in \eqref{mainprob} are as follows:
\begin{itemize}
	\item[{\bf(f1)}] $f\in C(\Om\times\RR,\RR)$ such that  $|f(x,t)|\leq \mathcal{C}\left(1+|t|^{r(x)-1}\right),$ where $\mathcal{C}>0$ is a constant, $r\in C_+(\RR^N)$ satisfies $1< r(x)q^-\leq r(x) q^+<p_s^*(x),$ $r^->\frac{\theta p^+}{2}$ and $q\in C_+(\R)$ verifies \eqref{symetry}.
	\item [{\bf(f2)}]$f(x,t)=o \left({|t|^{\frac{\theta p^+}{2}-2}t}\right)$ as $|t|\to0,$ uniformly in $x\in\Om.$ 
	\item [{\bf(f3)}]$\DD\lim_{|t|\to\infty}\frac{F(x,t)}{|t|^{\gr}}=
	+\infty$ uniformly in $x\in\Om,$  where $F(x,t):=\int_{0}^{t}f(x,s)ds$ is the primitive of $f.$ 
	\item [{\bf(f4)}] There exists $\vartheta>1$ such that $\vartheta\mathcal{F}(x,t)\geq\mathcal{F}(x,st)$ for $(x,t)\in\Om\times\RR$ and $s\in[0,1],$ where $\mathcal{F}(x,t)=2tf(x,t)-{\theta p^+}F(x,t).$
\end{itemize}
The condition $(f4)$ is originally due to Jeanjean \cite{jean} in the case
$p(x)=2$, and then was used in \cite{liu1} for $p-$Laplacian equations in bounded domain. It is worthy to note that the assumptions $(f2)-(f4)$  allow us to consider the nonlinearities which do 
not satisfy following standard Ambrosetti–Rabinowitz {(AR)} type condition:\\
{\bf(AR)} There exists $\omega>\theta p^+$ such that
{$$0< \omega F(x,t)\leq 2tf(x,t), t\not=0, \text{ for all } x\in\Om .$$}
An example of such function is
$f(x,t)=t|t|^{\gr-2}\log(1+|t|).$ 
Ambrosetti–Rabinowitz condition
ensures that an Euler-Lagrangian functional has the mountain pass geometry
structure  and also plays a pivotal role in establishing the boundedness of the Palais-Smale sequence of the functional. Therefore,
relaxing {(AR)} condition not only includes a larger class of nonlinearities but also calls for 
delicate analysis to establish the compactness results 
and hence interests many studies (see for e.g., \cite{alves,radulescu,liu2} and references there in).
We make the following remarks about $f(x,t)$.
\begin{remark}\label{F4}
	Since $f(x,0)=0=F(x,0),$ thanks to $(f2),$  from $(f4)$ we get $\mathcal{F}(x,t)\geq0,$ that is,
	{\begin{equation}\label{f4}
	2tf(x,t)-{\theta p^+}F(x,t)\geq0 \text{~~for all ~~} \rr.
	\end{equation}}
\end{remark}
\noi The following crucial remark  was  studied in \cite{liu2} for local $p-$Laplacian set up.
\begin{remark}\label{f5}
	$F(x,t)\geq 0$        for all $\rr.$
	\begin{proof}
		For $t>0,$ using \eqref{f4}, we have 
	{	$$\frac{d}{dt}\frac{F(x,t)}{t^{\gr}}=\frac{t^{\gr} f(x,t)-\frac{\theta p^+}{2}t^{\gr-1}F(x,t)}{t^{\theta p^+}}=\frac{\mathcal{F}(x,t)}{2\;t^{\gr+1}}\geq0.$$}
		Also from $(f2),$ we can easily deduce that $\DD\lim_{t\to 0^+}\frac{F(x,t)}{t^{\gr}}=0.$
		Using the above two facts, it follows that $F(x,t)\geq0$ for all $(x,t)\in\Om\times\RR,~t\geq0.$\\
		Similarly, for $t<0,$ proceeding  as above, we get 
		$\DD\lim_{t\to 0^-}\frac{F(x,t)}{(-t)^{\gr}}=0,$ and therefore  $F(x,t)\geq0$  for all $(x,t)\in\Om\times\RR,~t\leq0.$
		Thus the result follows.
	\end{proof}
\end{remark}
\noi In view of the above remark, we have the following assertion:
\begin{remark}\label{f6}
	From \eqref{f4} and Remark \ref{f5}, we obtain
	$f(x,t)\geq0 \text{ for all } (x,t)\in\Om\times\RR, ~t\geq0 \text{ and } 
	f(x,t)\leq0 \text{ for all } (x,t)\in\Om\times\RR,~t\leq0.
	$
	Therefore  for all $x \in \Om,$ we have
	$F(x,t)$ { is non decreasing in $t\geq0$}  
	and is non increasing in $t\leq0.$
\end{remark}
\begin{definition}[Weak solution]
	A function $u\in E$ ({defined in section \ref{Pre}}) is said to be weak solution of \eqref{mainprob}, if for all $w\in E$ 
	{	\begin{align*}
	&m(\sigma(u))\Big[\int_{\RR^N}\int_{\RR^N}\frac{|
			u(x)-u(y)|^{p(x,y)-2}(u(x)-u(y))(w(x)-w(y))}{|
			x-y|^{N+s(x,y)p(x,y)}}dxdy\\&~~~~~~~~~~~~~~~~~~~~+\int_{\Om}V(x)|u(x)|^{\p-2}u(x)w(x)dx\Big]=\int_{\Om}\int_{\Om}\frac{F(y,u(y))f(x,u(x))w(x)}{|x-y|^{\mu(x,y)}}dx.\end{align*}}
\end{definition}
\noi The weak solutions are characterized as the critical point of the associated energy functional $J$ (see Definition \ref{energy}). Now
we state the main results to be proved in this article. 
\begin{theorem}\label{mainthm}
	Let $(S1),(P1),(\mu1), (V1)$ and $(M1)$ hold.
	Also let $f$ satisfy $(f1)-(f4)$. Then \ref{mainprob} admits a nontrivial weak solution. 
\end{theorem}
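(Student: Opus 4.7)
The plan is to prove Theorem \ref{mainthm} via the Mountain Pass Theorem applied to the Euler--Lagrange functional
$$ J(u) := M\bigl(\sigma(u)\bigr) - \frac{1}{2}\F, $$
where $\sigma(u) := \A + \B$ is the modular on the function space $E$ introduced in Section \ref{Pre}. First I would verify that $J \in C^1(E,\RR)$ with derivative matching the weak formulation in the Definition above; regularity of the Choquard term relies on the Hardy--Littlewood--Sobolev-type inequality for variable exponents of \cite{alves-choquard,rs} combined with $(f1)$, the compatibility \eqref{symetry}, and continuous embeddings of $E$, while regularity of the Kirchhoff part follows from $(M1)$ and Remark \ref{rem-M}.

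Next, I would establish the mountain pass geometry. From $(f1)$ and $(f2)$, for every $\e>0$ there exists $C_\e>0$ such that $|F(x,t)|\leq \e|t|^{\theta p^+/2}+C_\e|t|^{r(x)}$. Inserting this bound into HLS and using the embedding of $E$ into $L^{r(\cdot)q(\cdot)}(\Om)$ controls the Choquard term by $\e\,\|u\|^{\theta p^+}+C_\e\,\|u\|^{2r^-}$ on small balls; since $2r^->\theta p^+$ by $(f1)$ and $M(\sigma(u))\geq c\,\|u\|^{\theta p^+}$ on a small sphere (by Remark \ref{main}/Remark \ref{rem-M} together with the norm--modular relation in variable-exponent spaces), taking $\e$ small yields $J(u)\geq \alpha >0$ whenever $\|u\|=\rho$ for suitable $\alpha,\rho>0$. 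For the second geometric condition, $(f3)$ provides $F(x,t)\geq A|t|^{\theta p^+/2}$ for $|t|$ large, hence $\F \geq c A^2 \|u\|^{\theta p^+}$ along rays $u=t\phi$ with $\phi \in C_c^\infty(\Om)\setminus\{0\}$, $\phi\geq 0$ and $t$ large; since $M(\sigma(t\phi))\leq c\,t^{\theta p^+}$, taking $A$ large forces $J(t\phi)\to -\infty$.

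The main obstacle is verifying the Cerami compactness condition without $(AR)$. Let $\{u_n\}\subset E$ satisfy $J(u_n)\to c$ and $(1+\|u_n\|)\|J'(u_n)\|_{E^{*}}\to 0$. I argue by contradiction that $\{u_n\}$ is bounded. Assuming $\|u_n\|\to\infty$, set $v_n := u_n/\|u_n\|$; up to a subsequence $v_n \rightharpoonup v$ in $E$, $v_n \to v$ a.e.\ and in $L^{r(\cdot)q(\cdot)}(\Om)$. If $v \not\equiv 0$, on $\{v\neq 0\}$ we have $|u_n|\to\infty$, so $(f3)$ and Fatou's lemma give $\F/\|u_n\|^{\theta p^+}\to\infty$, contradicting the upper bound on $J(u_n)/\|u_n\|^{\theta p^+}$ coming from $(M1)$. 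If $v\equiv 0$, I choose $s_n\in[0,1]$ so that $J(s_n u_n)=\max_{s\in[0,1]}J(s u_n)$; a Jeanjean-type rescaling using $v_n\to 0$ in $L^{r(\cdot)q(\cdot)}(\Om)$ shows $J(s_n u_n)\to\infty$, whereas the inequality $\vartheta\mathcal{F}(x,t)\geq \mathcal{F}(x,st)$ from $(f4)$, combined with $\langle J'(s_n u_n),s_n u_n\rangle=o(1)$ and Remark \ref{main}/Remark \ref{rem-M}, yields $J(s_n u_n)\leq \vartheta c+o(1)$, a contradiction. This gives boundedness. Passing to a weakly convergent subsequence $u_n\rightharpoonup u$ in $E$, strong convergence follows by combining the compactness of the Choquard term (HLS plus compact embedding), the continuity of $m$, and the $(S_+)$-property of the variable-order fractional $p(\cdot)$-Laplacian established in \cite{rs}.

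Applying the Mountain Pass Theorem then produces a critical point $u\in E$ of $J$ with $J(u)\geq \alpha>0=J(0)$, so that $u\not\equiv 0$ is a nontrivial weak solution of \eqref{mainprob}. The technically delicate step is the $v\equiv 0$ branch of the Cerami boundedness argument: condition $(f4)$ must replace $(AR)$ in the rescaling, and the variable exponents $p,\mu,\overline{p}$ together with the doubly nonlocal Choquard structure require careful bookkeeping in every HLS and modular--norm estimate.
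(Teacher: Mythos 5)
Your proposal follows essentially the same route as the paper: mountain pass geometry is verified with the same $(f1)$--$(f3)$ growth estimates through the Hardy--Littlewood--Sobolev inequality, the Cerami condition is established via Jeanjean's rescaling argument (the dichotomy on the weak limit of $u_n/\|u_n\|_E$, then the $t_n\in[0,1]$ maximizer and $(f4)$), and strong convergence of the Cerami sequence is the final ingredient. The only cosmetic difference is that you invoke an $(S_+)$-type property as a black box for that last step, whereas the paper establishes the strong convergence of $u_n\to u$ directly from Simon's inequalities after splitting over the sets where $p(x,y)\gtrless 2$.
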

\noi The next theorem  deals with  the  ground state solution  of the problem \eqref{mainprob}, which 
is a solution that
minimizes the  functional $J$ among all nontrivial solutions.
To study ground state solution  for  \eqref{mainprob} without {(AR)} condition, we consider the assumption: 

\begin{itemize}
	\item [{\bf(f4)$'$}]  $\frac{f(x,t)}{|t|^{\gr-2}t}$ is increasing in $t>0$ and decreasing in $t<0$ for all $x\in\Om.$
\end{itemize}
Again we can see that the conditions $(f2),(f3),(f4)'$ are weaker than {(AR)}. 
\begin{theorem}\label{ground-state 2}
	Let  $(S1),(P1),(\mu1), (V1)$ and $(M1)$ hold.
	Also let $f$ satisfy $(f1)-(f3)$ and $(f4)'.$ 
	Then the problem \ref{mainprob} admits a nontrivial ground state solution. 
\end{theorem}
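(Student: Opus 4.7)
The plan is to apply the Nehari manifold method: define
$$\mathcal{N} := \{u \in E \setminus \{0\} : \langle J'(u), u \rangle = 0\}, \qquad c := \inf_{\mathcal{N}} J,$$
and show that $c$ is attained at some $u_0 \in \mathcal{N}$, which will then be a critical point of $J$ and the desired ground state. First I would analyse the fibering map $\gamma_u(t) := J(tu)$ for $u \in E \setminus \{0\}$ and $t > 0$. Using $(f1)$, $(f2)$ and the Hardy--Littlewood--Sobolev-type inequality for variable exponents established in \cite{rs}, $\gamma_u(t) > 0$ for small $t$; using $(f3)$, $\gamma_u(t) \to -\infty$ as $t \to \infty$; and using $(f4)'$, which says that $f(x,t)/|t|^{\gr-2}t$ is strictly monotone in $t$ on each half-line, the critical point of $\gamma_u$ is unique. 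Consequently there is a unique $t_u > 0$ with $t_u u \in \mathcal{N}$, the map $u \mapsto t_u u$ is continuous, and from $\langle J'(u), u \rangle = 0$ combined with $(f1)$--$(f2)$ and HLS one extracts a uniform lower bound $\|u\|_E \geq \rho_0 > 0$ on $\mathcal{N}$; this yields $c > 0$.

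Next, let $\{u_n\} \subset \mathcal{N}$ be a minimizing sequence, so that $J(u_n) \to c$. The combination $\theta p^+ J(u_n) - \langle J'(u_n), u_n \rangle$ can be bounded below by a coercive quantity in $\|u_n\|_E$, using Remarks \ref{main} and \ref{rem-M} on $M$ together with the inequality $2tf(x,t) - \theta p^+ F(x,t) \geq 0$, which follows from $(f4)'$ exactly as in Remark \ref{F4}. This produces boundedness of $\{u_n\}$ in $E$; up to a subsequence $u_n \rightharpoonup u_0$ in $E$, $u_n \to u_0$ a.e.\ on $\Omega$, and $u_n \to u_0$ strongly in appropriate variable-exponent Lebesgue spaces by the compactness results for the variable-order $p(\cdot)$-fractional Sobolev space. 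The variable-exponent HLS inequality together with Brezis--Lieb type splittings then permits passage to the limit in the Choquard term.

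To conclude, I would invoke Ekeland's variational principle on $\mathcal{N}$ to assume that, in addition, $J'(u_n) \to 0$ in $E^*$. If $u_0 \equiv 0$, the Choquard term vanishes in the limit, forcing $\|u_n\|_E \to 0$ and contradicting $c > 0$; hence $u_0 \not\equiv 0$. Weak lower semicontinuity of the norm terms in $J$, combined with strong convergence of the nonlocal term, gives $J(u_0) \leq c$; the fibering analysis applied to $u_0$ shows that its unique projection parameter onto $\mathcal{N}$ equals $1$, so $u_0 \in \mathcal{N}$ and hence $J(u_0) \geq c$. Therefore $J(u_0) = c$ and $J'(u_0) = 0$, producing the ground state. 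The main obstacle is showing that $\mathcal{N}$ is a \emph{natural constraint} without the Ambrosetti--Rabinowitz condition: verifying that the Lagrange multiplier vanishes and that the minimizing sequence is bounded both rely crucially on the strict monotonicity in $(f4)'$, which replaces the convex/superlinear control that (AR) would otherwise provide, and requires careful handling of the Kirchhoff factor $m(\sigma(u))$ in both the degenerate ($a=0$) and non-degenerate ($a>0$) regimes.
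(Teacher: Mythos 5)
Your proposal is sound in spirit but follows a genuinely different route than the paper. You set up a \emph{direct} constrained minimization over the Nehari manifold $\mathcal{N}$: establish uniqueness and continuity of the Nehari projection $u\mapsto t_u u$, lower-bound $\|u\|_E$ on $\mathcal{N}$, take a minimizing sequence, prove boundedness, use compactness and the HLS inequality to pass to the limit, invoke Ekeland to upgrade to a Palais--Smale/Cerami-like sequence, and finally argue that the Lagrange multiplier vanishes. The paper sidesteps nearly all of this. It first observes that $(f4)'$ implies $(f4)$ (a short monotonicity computation), so Theorem \ref{mainthm} already produces a nontrivial critical point $v_*$ at the mountain pass level $b_*$; since $v_*\in\mathcal{N}$, trivially $\alpha_*:=\inf_{\mathcal{N}}J\leq b_*$. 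The only real work is then the reverse inequality $b_*\leq\alpha_*$, obtained by fixing $u\in\mathcal{N}$ and analysing the fibering map $\mathcal{H}(t)=J(tu)$: using Remarks \ref{main}/\ref{rem-M} for the Kirchhoff term, $(f4)'$ for the nonlinearity, and $\mathcal{H}'(1)=0$ (since $u\in\mathcal{N}$), one shows $\mathcal{H}'\geq0$ on $(0,1]$ and $\mathcal{H}'\leq0$ on $[1,\infty)$, hence $J(u)=\max_{t\geq0}J(tu)$; rescaling $u$ gives an admissible path in $\Gamma$ whose maximum is $J(u)$, so $b_*\leq J(u)$ for every $u\in\mathcal{N}$. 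What this buys is substantial: the paper never needs to verify that $\mathcal{N}$ is a natural constraint, never needs the $C^1$-manifold structure of $\mathcal{N}$, and never needs Ekeland---precisely the steps you correctly flag as the \emph{main obstacle} in your approach without (AR). Your route is feasible (it is the classical Szulkin--Weth style argument), but it would require carrying out in detail the uniqueness/continuity of the Nehari projection, the lower semicontinuity of $m(\sigma(\cdot))$, and the vanishing of the Lagrange multiplier in the presence of the Kirchhoff factor (especially in the degenerate case $a=0$), none of which you fill in; by contrast, the paper's characterization of $b_*$ as the Nehari infimum via the fibering map is essentially self-contained once Theorem \ref{mainthm} is available.
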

\noi Next, for the odd nonlinearity $f(x,t),$ we state the existence results  of infinitely many solutions using the Fountain theorem and the Dual fountain theorem.
\begin{theorem}\label{infinite-sol}
	Let $(S1), (P1),(\mu1), (V1)$ and $(M1)$ hold. Also let $f$ satisfy $(f1)-(f4)$ with $f(x,-t)=-f(x,t).$ 
	Then the problem \ref{mainprob} has a sequence of nontrivial weak solutions with unbounded energy.
\end{theorem}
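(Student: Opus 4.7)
The plan is to apply the Fountain theorem to the energy functional
$$J(u) \;=\; M\bigl(\sigma(u)\bigr) \;-\; \frac{1}{2}\F, \qquad \sigma(u):=\A+\B,$$
whose critical points in $E$ are precisely the weak solutions of \eqref{mainprob}. Once $J\in C^1(E,\RR)$ is established in the preliminary section, the oddness of $f$ makes $F(x,\cdot)$ even, so both the Kirchhoff term $M(\sigma(u))$ and the double integral are even in $u$; hence $J$ is even with $J(0)=0$. Since $E$ is separable and reflexive, I fix a Schauder basis producing a decomposition $E=\overline{\bigoplus_{j\ge 1}X_j}$ with $\dim X_j<\infty$ and set $Y_k=\bigoplus_{j=1}^{k}X_j$, $Z_k=\overline{\bigoplus_{j\ge k}X_j}$. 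I then need (i) the Cerami condition $(C)_c$ at every level $c\in\RR$; (ii) $\rho_k>0$ with $\max_{u\in Y_k,\ \|u\|=\rho_k}J(u)\le 0$; (iii) $r_k>0$, $r_k<\rho_k$, with $b_k:=\inf_{u\in Z_k,\ \|u\|=r_k}J(u)\to+\infty$.

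Step (i) is the heart of the proof because the hypothesis (AR) is unavailable. For boundedness of a Cerami sequence $\{u_n\}$ I plan to use Jeanjean's contradiction scheme: assume $\|u_n\|\to\infty$, set $v_n=u_n/\|u_n\|$, pass to a weakly convergent subsequence $v_n\rightharpoonup v$ in $E$ and strongly in $L^{r(\cdot)q(\cdot)}(\Om)$. If $v\not\equiv 0$, the assumption $r^->\theta p^+/2$ together with $(f3)$ forces $J(u_n)/\|u_n\|^{\theta p^+}\to-\infty$, contradicting the Cerami control $J(u_n)\to c$. If $v\equiv 0$, I choose $t_n\in[0,1]$ realising $J(t_nu_n)=\max_{t\in[0,1]}J(tu_n)$ and exploit $(f4)$ together with Remark \ref{F4} to derive $J(t_nu_n)\to+\infty$, in contradiction with the bound $2\langle J'(t_nu_n),t_nu_n\rangle-\theta p^+J(t_nu_n)=o(1)$ that follows from the Cerami hypothesis and Remark \ref{F4}. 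Once $\{u_n\}$ is bounded, strong convergence is extracted from the compact embeddings of $E$, the variable-exponent Hardy-Littlewood-Sobolev inequality of \cite{rs} (applied to the nonlocal term via \eqref{symetry}), and the $(S_+)$-property of $(-\Delta)^{s(\cdot)}_{p(\cdot)}$ combined with the continuity of $m$ away from $0$ (Remark \ref{main}/\ref{rem-M}).

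For (ii), on the finite-dimensional space $Y_k$ all norms are equivalent, so Remark \ref{f5} plus $(f3)$ give
$$J(u)\;\le\; M(1)\bigl(1+\sigma(u)^{\theta}\bigr)\;-\;\eta_k(\|u\|)\,\|u\|^{\theta p^+},$$
where $\eta_k(t)\to+\infty$ as $t\to\infty$ because $F(x,u)^2/|u|^{\theta p^+}\to\infty$ by $(f3)$ and the Choquard integral controls $\|F(\cdot,u)\|^2$ from below on the finite-dimensional slice; hence taking $\rho_k$ large yields $\max_{Y_k\cap\partial B_{\rho_k}}J\le 0$. For (iii) I use the standard fact that $\beta_k:=\sup_{u\in Z_k,\|u\|=1}\|u\|_{L^{r(\cdot)q(\cdot)}(\Om)}$ satisfies $\beta_k\to 0$ as $k\to\infty$. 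Combining the growth in $(f1)$, the Hardy-Littlewood-Sobolev inequality, and the modular-norm estimates for $\sigma$, the Choquard term on $Z_k$ is bounded by $C\beta_k^{2r^{\pm}}\|u\|^{2r^{\pm}}$ while $M(\sigma(u))$ dominates $C'\|u\|^{\theta p^+}$ (or $\|u\|^{\theta p^-}$) in the relevant range; since $2r^->\theta p^+$ by $(f1)$, a tailored choice $r_k\sim\beta_k^{-\alpha}$ for an appropriate $\alpha>0$ gives $b_k\to+\infty$.

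With (i)-(iii) in hand, the Fountain theorem produces an unbounded sequence $\{c_k\}$ of critical values of $J$, yielding a sequence of weak solutions of \eqref{mainprob} with unbounded energy. The principal obstacle is clearly step (i): ensuring boundedness of Cerami sequences without (AR) (which demands the subtle Jeanjean argument and is the raison d'être of $(f4)$), and recovering strong convergence through the interplay between the $(S_+)$-property of the variable-order fractional $p(\cdot)$-Laplacian and the variable-exponent Hardy-Littlewood-Sobolev inequality for the Choquard nonlinearity.
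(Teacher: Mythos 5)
Your proposal follows essentially the same route as the paper: Fountain theorem on the Bartsch decomposition $Y_k,Z_k$, the Cerami condition verified via the Jeanjean-type scheme (normalise $v_n=u_n/\|u_n\|_E$, show the weak limit vanishes, choose a maximiser $t_n\in[0,1]$, and invoke $(f4)$/Remark \ref{F4} to reach a contradiction), strong convergence recovered through the compact embedding, the Hardy--Littlewood--Sobolev inequality and the Simon inequalities for the $p(\cdot)$-structure, then $\alpha_k\to0$ and the HLS estimate for {\rm(B1)}, and $(f3)$ on finite-dimensional $Y_k$ for {\rm(B2)}. The only notable packaging difference is in {\rm(B2)}: you state directly that $J(u)\le M(1)(1+\sigma(u)^\theta)-\eta_k(\|u\|)\|u\|_E^{\theta p^+}$ with $\eta_k(t)\to\infty$, but because of the Choquard double integral this lower bound on $\Psi$ needs uniformity over directions in the unit sphere of $Y_k$; the paper deliberately avoids having to establish that uniformity by arguing indirectly -- assuming {\rm(B2)} fails, normalising $w_n=u_n/\|u_n\|_E\to w\neq0$ (strong convergence in the finite-dimensional slice), and applying Fatou's lemma to conclude $\Psi(u_n)/\|u_n\|_E^{\theta p^+}\to+\infty$, hence $J(u_n)\to-\infty$, a contradiction. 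Your sketch of {\rm(B2)} would therefore need this additional compactness-over-the-sphere step to be complete, but otherwise the argument is the same.
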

\begin{theorem}\label{dual-fount-sol}
	Let $(S1),(P1),(\mu1), (V1)($ and $M1)$ hold. Also let $f$ satisfy $(f1)-(f4)$ with $f(x,-t)=-f(x,t)$.
	Then the problem \ref{mainprob} has a sequence of nontrivial weak solutions with negative critical values converging to zero.
\end{theorem}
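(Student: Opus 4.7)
The plan is to verify the four hypotheses of the Bartsch--Willem Dual Fountain Theorem for the energy functional $J$ associated with problem \eqref{mainprob}. Since the work space $E$ constructed in Section \ref{Pre} is separable and reflexive, I fix a Schauder basis $\{e_j\}_{j\in\mathbb N}$ and set $Y_k := \mathrm{span}\{e_1,\dots,e_k\}$ and $Z_k := \overline{\mathrm{span}\{e_j : j \geq k+1\}}$. The hypothesis $f(x,-t)=-f(x,t)$ makes $F(x,\cdot)$ even, so both the Kirchhoff part $M(\sigma(u))$ and the double-integral Choquard term are even in $u$, whence $J$ is even with $J(0)=0$. The central technical ingredient is that
\[
\beta_k(\alpha) := \sup\bigl\{\|u\|_{L^{\alpha(\cdot)}(\Om)} : u\in Z_k,\ \|u\|_E = 1\bigr\} \longrightarrow 0 \quad\text{as }k\to\infty
\]
for every continuous subcritical exponent $\alpha(\cdot)$; this follows from the compactness of the embedding $E\hookrightarrow L^{\alpha(\cdot)}(\Om)$ established in \cite{rs}.

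Applying $\beta_k$ with $\alpha_1(x)=\tfrac{\theta p^+}{2}q^+$ and $\alpha_2(x)=r(x)q^+$ (both subcritical by $(f1)$ and $(M1)$), together with $(f1)$, $(f2)$, and the variable-exponent Hardy--Littlewood--Sobolev inequality from \cite{rs}, I will estimate
\[
\tfrac{1}{2}\F \leq \varepsilon\, C_1\, \beta_k(\alpha_1)^{\theta p^+}\|u\|_E^{\theta p^+} + C_\varepsilon\, C_2\, \beta_k(\alpha_2)^{2r^-}\|u\|_E^{2r^-}
\]
for $u\in Z_k$ with $\|u\|_E\leq 1$. Combining this with the Kirchhoff coercivity $M(\sigma(u))\geq M(1)\|u\|_E^{\theta p^+}$ from Remarks \ref{main} and \ref{rem-M}, and the inequality $2r^- > \theta p^+$ ensured by $(f1)$, I will pick $\rho_k$ as the balance point of the two Choquard terms: absorbing the first into the principal part for $k$ large (so that $\varepsilon\,C_1\,\beta_k(\alpha_1)^{\theta p^+} \leq M(1)/2$) forces $\rho_k\to 0$ and $\inf_{u\in Z_k,\,\|u\|_E=\rho_k}J(u)\geq 0$, which is $(B_1)$. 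The same computation produces $d_k \geq -C\rho_k^{2r^-}\to 0$, giving $(B_3)$.

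The main obstacle will be condition $(B_2)$: exhibiting $0<r_k<\rho_k$ with $\sup_{u\in Y_k,\,\|u\|_E=r_k}J(u)<0$, because the Choquard-Kirchhoff structure is superlinear everywhere and an unguarded estimate would yield $J>0$ near the origin. I will exploit the finite-dimensionality of $Y_k$: since all norms are equivalent on $Y_k$, there exists $\gamma_k>0$ with $\|u\|_{L^{\theta p^+/2}(\Om)}\geq\gamma_k\|u\|_E$ uniformly on $Y_k$. The superquadratic lower bound $(f3)$ provides, for each $A>0$, a threshold $t_A$ with $F(x,t)\geq A|t|^{\theta p^+/2}$ whenever $|t|\geq t_A$; combined with the double-integral structure of the Choquard term, this will let me bound $\tfrac{1}{2}\F \geq c\,A^2\,\gamma_k^{\theta p^+}\|u\|_E^{\theta p^+}$ on a suitable sphere $\|u\|_E=r_k$, dominating the Kirchhoff upper bound $M(1)(1+\|u\|_E^{\theta p^-})$ from \eqref{m4} once $A$ is taken large. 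Choosing $r_k$ small enough that $r_k<\rho_k$ but above the $k$-dependent threshold needed to activate $(f3)$ on a set of positive measure will secure $(B_2)$. Finally, the $(PS)_c^\ast$ condition $(B_4)$ on $[d_{k_0},0)$ follows along the lines of the Palais--Smale analysis already developed for Theorems \ref{mainthm} and \ref{infinite-sol}: $(f3)$--$(f4)$ yield boundedness of any $(PS)_c^\ast$ sequence in $E$, compact embedding gives strong convergence in $L^{\alpha(\cdot)}(\Om)$, and the $(S_+)$-property of the variable-order fractional $p(\cdot)$-Laplacian upgrades weak convergence to norm convergence in $E$. With $(B_1)$--$(B_4)$ in hand, the Dual Fountain Theorem produces the claimed sequence of negative critical values converging to zero.
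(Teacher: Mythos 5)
Your treatment of $(A_1)/(B_1)$, $(A_3)/(B_3)$, and $(A_4)/(B_4)$ matches the paper's strategy: use the vanishing of the quantity you call $\beta_k$ (the paper's $\alpha_k$) to control the sign of $J$ on small spheres in $Z_k$, and re-run the compactness argument of Lemma \ref{bounded} for the $*$-condition. Two small discrepancies there: the paper verifies the Cerami condition $(C)_c^*$ rather than $(PS)_c^*$, which matters because $(f4)$ is weaker than (AR) and only a Cerami sequence can be shown bounded; and the paper does not invoke an $(S_+)$-property, instead deriving norm convergence directly from Simon's inequalities inside Lemma \ref{bounded}.

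The genuine gap lies in your argument for $(B_2)$. You propose radii $r_k<\rho_k$ with $\rho_k\to 0$ and then wish to invoke $(f3)$ on a set of positive measure. But $(f3)$ only gives the lower bound $F(x,t)\geq A\,|t|^{\theta p^+/2}$ for $|t|\geq t_A$, a statement at infinity. On the sphere $\{u\in Y_k:\|u\|_E=r_k\}$ of the finite-dimensional space $Y_k$, norm equivalence yields $\|u\|_{L^\infty(\Omega)}\leq C_k\,r_k\to 0$, so for $r_k$ small there is \emph{no} set of positive measure where $|u(x)|\geq t_A$: the set you want to ``activate $(f3)$'' on is empty, not merely small. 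Worse, by $(f2)$ the Choquard term is $o\bigl(\|u\|_E^{\theta p^+}\bigr)$ near the origin while $M(\sigma(u))\gtrsim\|u\|_E^{\theta p^+}$, so on shrinking spheres $J>0$ --- this is precisely the mountain pass geometry already established in Lemma \ref{mp1}, and it blocks a direct proof of $J<0$ for arbitrarily small $\|u\|_E$. The paper handles $(A_2)$ by a different, indirect route: it supposes $(A_2)$ fails, extracts a sequence $\{u_n\}\subset Y_k$ with $\|u_n\|_E\to\infty$ and $J(u_n)\geq 0$, then invokes the Fatou-type estimate \eqref{ft7} (already used for step $(B2)$ of the Fountain theorem) together with Remark \ref{main} (resp.\ Remark \ref{rem-M}) and Proposition \ref{norm-modular} to conclude $J(u_n)\to-\infty$, a contradiction. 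To repair your argument you would need either to adopt that contradiction scheme, or to decouple $\delta_k$ from the $\varrho_k\to 0$ that your $(B_1)$ normalization imposes (noting that the paper's $\varrho_k$ from $(A_1)$ is only a lower bound and may be replaced by any radius below $1$), and in either case the asymptotic condition $(f3)$ must be exploited through growth of $\|u_n\|_E$, not on small spheres.
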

\noi In this article we first prove the existence of nontrivial solution using Mountain pass theorem with Cerami condition, then using Nehari manifold we study existence of nontrivial ground state solution and finally existence of infinitely many solutions are achieved by applying Fountain theorem and Dual fountain theorem for the problem \ref{mainprob} which covers both degenerate and non-degenerate cases.
The main feature of the problem \ref{mainprob} is  the presence of both the
nonlocal Kirchhoff and Choquard terms together for which  \eqref{mainprob} remains no longer a point wise
identity and hence it is categorized as a doubly nonlocal problem.  Moreover, due to the involvement of the variable order and variable exponents,
the problem possess  non-homogeneous nature.  These facts induce some further mathematical difficulties in the use of classical
methods of nonlinear analysis. According to best of our knowledge the equation of type \eqref{mainprob}  involving the non-homogeneous operator $(-\Delta)_{p(\cdot)}^{s(\cdot)}$
is studied for the first time in this article. Also we want to mention that the same results will hold in case of local $p(x)-$Lapacian,
which are also new, as per best of our knowledge, in the literature.  The main novelty of this work is that  unlike as in \cite{alves-choquard,rs}(also as in most of the studies regarding Choquard-type problems existed in the literature) we relax the well
known Ambrosetti-Rabinowitz type condition on our nonlinearity $f$ 
and hence we need to carry out some extra careful and delicate analysis to overcome the difficulties and establish the Cerami 
condition so that we can prove the desired results in this article.
\section{Preliminary results and functional settings}\label{Pre}
\noi In this section first we briefly discuss  
some basic properties of
the  variable exponent Lebesgue spaces, which will be used as tools to prove
our main results.\\
For $\Theta\in C_+(\Om)$ define the variable exponent Lebesgue space $L^{\Theta(\cdot)}(\Om)$ as
{$$
L^{\Theta(\cdot)}(\Om) :={ \Big \{ u : \Om\to\mathbb{R}\  \text{is~measurable}: \int_{\Om} |u(x)|^{\Theta(x)} \;dx< \infty \Big \}}
$$}
which is a separable, reflexive, uniformly convex Banach space {(see \cite{diening, fan,radulescu1})} with respect to the Luxemburg norm
{$$
\|{u}\|_{ L^{\Theta(\cdot)}(\Om)}:=\inf\Big\{\eta >0:
\int_{\Om}
\Big|\frac{u(x)}{\eta}\Big|^{\Theta(x)}\;dx\le1\Big\}.
$$ }Define the modular $\rho:\ L^{\Theta(\cdot)}(\Om)\to\mathbb{R}$ as
$
\rho(u):=\int_{\Om }|u|^{\Theta(x)}\; dx, \  { for~all~} u\in
L^{\Theta(\cdot)}(\Om).
$
{\begin{proposition} \label{norm-mod}
	{\rm (\cite{fan})}Let $u_n,u\in L^{\Theta(\cdot)}(\Om)\setminus\{0\},$ then the following properties hold:
	\begin{itemize}
			\item[\rm{(i)}]   $\eta=\|u\|_{ L^{\Theta(\cdot)}(\Om)}$ if and only if  $\rho(\frac{u}{\eta})=1.$
			\item[\rm{(ii)}] $\rho(u)>1$ $(=1;\ <1)$ if and only if  $\|u\|_{ L^{\Theta(\cdot)}(\Om)}>1$ $(=1;\ <1)$,
			respectively.
			\item[\rm{(iii)}] If $\|u\|_{ L^{\Theta(\cdot)}(\Om)}>1$, then $
			\|u\|_{ L^{\Theta(\cdot)}(\Om)}^{\Theta^-}\le \rho(u)\le
			\|u\|_{L^{\Theta(\cdot)}(\Om)}^{\Theta^+}$.
			\item[\rm{(iv)}] If $\|u\|_{ L^{\Theta(\cdot)}(\Om)}<1$, then $
			\|u\|_{L^{\Theta(\cdot)}(\Om)}^{\Theta^+}\le \rho(u)\le
			\|u\|_{L^{\Theta(\cdot)}(\Om)}^{\Theta^-}$.
			\item[\rm(v)] ${\DD\lim_{n\ra \infty} }\| u_{n} - u \|_{ L^{\Theta(\cdot)}(\Om)} =0\iff{\DD\lim_{n\ra \infty}} \rho(u_{n} -u)=0.$
	\end{itemize}
\end{proposition}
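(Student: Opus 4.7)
The plan is to prove the five assertions in the listed order, since (i) supplies the key identity $\rho(u/\|u\|_{L^{\Theta(\cdot)}(\Om)}) = 1$ that drives everything else. To establish (i), I would study the function $\varphi(\eta) := \rho(u/\eta) = \int_\Om |u(x)/\eta|^{\Theta(x)}\,dx$ on $(0,\infty)$. Since $\Theta^- > 1$, the integrand is strictly decreasing in $\eta$, and by the monotone convergence theorem $\varphi(\eta) \to \infty$ as $\eta \to 0^+$ (because $u \not\equiv 0$) and $\varphi(\eta) \to 0$ as $\eta \to \infty$. The function $\varphi$ is continuous by the dominated convergence theorem: for any compact subinterval of $(0,\infty)$, the integrand is dominated by a fixed integrable function using $\Theta^+ < \infty$. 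Hence $\varphi$ is a continuous, strictly decreasing bijection from $(0,\infty)$ onto $(0,\infty)$, so there is a unique $\eta_0$ with $\varphi(\eta_0) = 1$, and the infimum defining the Luxemburg norm is exactly $\eta_0$.

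For (ii), I would just evaluate: if $\|u\|_{L^{\Theta(\cdot)}(\Om)} = 1$, then by (i) $\rho(u) = \rho(u/1) = 1$. If $\|u\|_{L^{\Theta(\cdot)}(\Om)} > 1$, then $\varphi(1) > \varphi(\|u\|_{L^{\Theta(\cdot)}(\Om)}) = 1$, so $\rho(u) > 1$; and similarly for $< 1$. The three equivalences in (ii) follow from the strict monotonicity of $\varphi$.

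For (iii) and (iv), I would exploit the pointwise factorization $|u(x)|^{\Theta(x)} = \|u\|_{L^{\Theta(\cdot)}(\Om)}^{\Theta(x)} \, \bigl|u(x)/\|u\|_{L^{\Theta(\cdot)}(\Om)}\bigr|^{\Theta(x)}$. Setting $\eta := \|u\|_{L^{\Theta(\cdot)}(\Om)}$, if $\eta > 1$ then $\eta^{\Theta^-} \le \eta^{\Theta(x)} \le \eta^{\Theta^+}$ pointwise, so integrating against the probability-like integrand $|u/\eta|^{\Theta(x)}$ (whose integral equals $1$ by (i)) yields $\eta^{\Theta^-} \le \rho(u) \le \eta^{\Theta^+}$. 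The case $\eta < 1$ reverses the pointwise inequalities on $\eta^{\Theta(x)}$ and gives (iv).

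Finally, (v) follows by applying (iii)-(iv) to $u_n - u$: if $\|u_n - u\|_{L^{\Theta(\cdot)}(\Om)} \to 0$, then for large $n$ it is less than $1$ and (iv) yields $\rho(u_n - u) \le \|u_n - u\|_{L^{\Theta(\cdot)}(\Om)}^{\Theta^-} \to 0$; conversely, if $\rho(u_n - u) \to 0$, then for large $n$ it is less than $1$, forcing $\|u_n - u\|_{L^{\Theta(\cdot)}(\Om)} < 1$ by (ii), and then (iv) gives $\|u_n - u\|_{L^{\Theta(\cdot)}(\Om)}^{\Theta^+} \le \rho(u_n - u) \to 0$. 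The only genuinely delicate step is the attainment claim in (i); everything else is bookkeeping with $\Theta^-, \Theta^+$. I do not expect substantive obstacles, as this is a standard fact from the theory developed in \cite{fan, diening, radulescu1}.
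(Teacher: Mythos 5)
Your proposal is correct and is exactly the standard argument: part (i) reduces to showing that $\eta\mapsto\rho(u/\eta)$ is a continuous, strictly decreasing bijection of $(0,\infty)$ onto itself (using $1<\Theta^-\le\Theta^+<\infty$ and $\rho(u)<\infty$), and parts (ii)--(v) then follow from the factorization $|u|^{\Theta(x)}=\|u\|^{\Theta(x)}|u/\|u\||^{\Theta(x)}$ together with $\rho(u/\|u\|)=1$. The paper itself offers no proof but simply cites Fan--Zhao \cite{fan}, and your argument is the one given there.
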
}
\noi Let $\Theta'$ be conjugate function of $\Theta,  $ that is,  $1/\Theta(x)+1/\Theta'(x)=1$.
\begin{proposition}{\rm (H\"{o}lder inequality)} \label{Holder}{\rm (\cite{fan})}	
	For any $u\in
	L^{\Theta(\cdot)}(\Om)$ and $v\in L^{\Theta'(\cdot)}(\Om)$, we have
	{$$
	\Big|\int_{\Om} uv\,d x\Big|
	\leq
	2\|u\|_{ L^{\Theta(\cdot)}(\Om)}\|{v}\|_{ L^{\Theta'(\cdot)}(\Om)}.
	$$}
\end{proposition}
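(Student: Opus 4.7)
The plan is the standard normalization argument combined with pointwise Young's inequality, leveraging the modular/norm correspondence already recorded in Proposition \ref{norm-mod}(i). Since this is a classical result going back to the foundational work on variable exponent Lebesgue spaces, the proof is essentially mechanical once that correspondence is in hand.

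First I would dispose of the trivial cases where $u \equiv 0$ or $v \equiv 0$ a.e.\ (both sides vanish), and otherwise set $\alpha := \|u\|_{L^{\Theta(\cdot)}(\Om)} > 0$ and $\beta := \|v\|_{L^{\Theta'(\cdot)}(\Om)} > 0$. I would work throughout with the normalized functions $\tilde u(x) := u(x)/\alpha$ and $\tilde v(x) := v(x)/\beta$, which by Proposition \ref{norm-mod}(i) satisfy $\rho(\tilde u) = 1$ and $\rho(\tilde v) = 1$.

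The key step is the pointwise Young inequality applied with the conjugate exponents $\Theta(x)$ and $\Theta'(x)$, yielding
\begin{equation*}
|\tilde u(x)\, \tilde v(x)| \ \leq\ \frac{1}{\Theta(x)} |\tilde u(x)|^{\Theta(x)} + \frac{1}{\Theta'(x)} |\tilde v(x)|^{\Theta'(x)} \qquad \text{for a.e.\ } x \in \Om.
\end{equation*}
Integrating over $\Om$ and using $\Theta(x), \Theta'(x) \geq 1$ (so that both reciprocals are bounded above by $1$ pointwise), the right-hand side is dominated by $\rho(\tilde u) + \rho(\tilde v) = 2$. Rearranging produces $\int_{\Om} |uv|\, dx \leq 2\alpha\beta$, and the triangle inequality $|\int_{\Om} uv\, dx| \leq \int_{\Om} |uv|\, dx$ delivers the claim.

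There is no substantive obstacle in this argument; the only point worth flagging is the origin of the constant $2$, which arises from crudely bounding the reciprocals $1/\Theta(x)$ and $1/\Theta'(x)$ by $1$ under the integral sign rather than exploiting the conjugacy identity $1/\Theta(x) + 1/\Theta'(x) = 1$ directly. The latter cannot be pulled out because the coefficients are $x$-dependent; consequently the factor $2$ is intrinsic to this Luxemburg-norm formulation of the variable-exponent Hölder inequality, and it is this form that the rest of the paper will use.
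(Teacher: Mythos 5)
The paper does not prove this statement itself; it cites Fan--Zhao \cite{fan}, where exactly this Young's-inequality normalization argument is given (Fan--Zhao actually obtain the sharper constant $\frac{1}{\Theta^{-}}+\frac{1}{(\Theta')^{-}}\leq 2$, of which the paper's statement is the weakened form). Your proof is correct, the appeal to Proposition~\ref{norm-mod}(i) for $\rho(\tilde u)=\rho(\tilde v)=1$ is legitimate after excluding the zero functions, and your remark about the origin of the factor $2$ is exactly right, so this is essentially the same approach as the cited source.
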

{\noi The above result is also valid  for  $\Om=\RR^N,$ which is called generalized H{\"o}lder inequality.
{\begin{lemma}{\rm(\cite{sweta})}\label{lemA1}
		Let  $\vartheta_1(x)\in L^\infty(\Om)$ such that $\vartheta_1\geq0,\; \vartheta_1\not\equiv 0.$ Let $\vartheta_2:\Om\ra\RR$ be a measurable function 
		such that $\vartheta_1(x)\vartheta_2(x)\geq 1$ a.e. in $\Om.$ Then for every $u\in L^{\vartheta_1(x)\vartheta_2(x)}(\Om),$ 
		$$\parallel |u|^{\vartheta_1(\cdot)}\parallel_{L^{\vartheta_2(x)}(\Om)}\leq 
		\parallel u\parallel_{L^{\vartheta_1(x)\vartheta_2(x)}(\Om)}^{\vartheta_1^-}+\parallel u\parallel_{L^{\vartheta_1(x)\vartheta_2(x)}(\Om)}^{\vartheta_1^+}.$$
\end{lemma}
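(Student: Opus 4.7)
The plan is to reduce the statement to a single modular identity via a normalization of $u$. Set $\mu:=\|u\|_{L^{\vartheta_1(x)\vartheta_2(x)}(\Om)}$. If $\mu=0$ then $u\equiv 0$ a.e.\ and both sides vanish, so I assume $\mu>0$ and write $u=\mu\tilde u$, so that $\|\tilde u\|_{L^{\vartheta_1(x)\vartheta_2(x)}(\Om)}=1$. By Proposition \ref{norm-mod}(i) this is equivalent to $\int_{\Om}|\tilde u|^{\vartheta_1(x)\vartheta_2(x)}\,dx=1$.

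The pivotal step is an exchange of exponents at the modular level, for which the hypothesis $\vartheta_1(x)\vartheta_2(x)\geq 1$ a.e.\ is used (it guarantees in particular that $\vartheta_1,\vartheta_2>0$ a.e., so the exponents make sense and the integrands are nonnegative):
\[
\int_{\Om}\bigl(|\tilde u(x)|^{\vartheta_1(x)}\bigr)^{\vartheta_2(x)}\,dx=\int_{\Om}|\tilde u(x)|^{\vartheta_1(x)\vartheta_2(x)}\,dx=1.
\]
A second application of Proposition \ref{norm-mod}(i), now in the space $L^{\vartheta_2(x)}(\Om)$, then yields $\||\tilde u|^{\vartheta_1(\cdot)}\|_{L^{\vartheta_2(x)}(\Om)}=1$.

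To undo the normalization I factor $|u(x)|^{\vartheta_1(x)}=\mu^{\vartheta_1(x)}|\tilde u(x)|^{\vartheta_1(x)}$ and bound $\mu^{\vartheta_1(x)}$ pointwise by splitting cases. When $\mu\geq 1$ the map $t\mapsto\mu^t$ is non-decreasing, so $\mu^{\vartheta_1(x)}\leq\mu^{\vartheta_1^+}$; when $0<\mu<1$ it is non-increasing, so $\mu^{\vartheta_1(x)}\leq\mu^{\vartheta_1^-}$. Combining with pointwise monotonicity of the Luxemburg norm in the absolute value of its argument and positive homogeneity, I obtain
\[
\||u|^{\vartheta_1(\cdot)}\|_{L^{\vartheta_2(x)}(\Om)}\leq\max\{\mu^{\vartheta_1^-},\mu^{\vartheta_1^+}\}\cdot\||\tilde u|^{\vartheta_1(\cdot)}\|_{L^{\vartheta_2(x)}(\Om)}=\max\{\mu^{\vartheta_1^-},\mu^{\vartheta_1^+}\}\leq\mu^{\vartheta_1^-}+\mu^{\vartheta_1^+},
\]
which is the claimed inequality.

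There is no deep obstacle; the one delicate feature is precisely the case split according to whether $\mu$ is above or below $1$. Because $\vartheta_1(x)$ varies, neither $\mu^{\vartheta_1^-}$ nor $\mu^{\vartheta_1^+}$ alone dominates $\mu^{\vartheta_1(x)}$ uniformly in $\mu$, which is exactly why the bound on the right-hand side must be a sum of the two extreme powers rather than a single power as in the constant-exponent case.
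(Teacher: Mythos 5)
The paper does not actually prove Lemma~\ref{lemA1}; it is quoted from \cite{sweta} without a proof, so there is no in-text argument to compare against. On its merits, your proof is correct and is essentially the standard modular argument that any proof of this fact must reduce to: normalize so that $\int_\Om |\tilde u|^{\vartheta_1\vartheta_2}\,dx=1$, read that identity as $\int_\Om\bigl(|\tilde u|^{\vartheta_1}\bigr)^{\vartheta_2}\,dx=1$ so that $\||\tilde u|^{\vartheta_1(\cdot)}\|_{L^{\vartheta_2}}=1$, and then absorb the scaling factor $\mu^{\vartheta_1(x)}$ by the pointwise bound $\mu^{\vartheta_1(x)}\le\max\{\mu^{\vartheta_1^-},\mu^{\vartheta_1^+}\}$, whose direction depends on whether $\mu\ge 1$ or $\mu<1$; that case split is precisely why the right-hand side is the sum of the two extreme powers. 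A remark worth making explicit: the hypothesis $\vartheta_1\vartheta_2\ge 1$ together with $\vartheta_1\in L^\infty$ only forces $\vartheta_2\ge 1/\|\vartheta_1\|_\infty>0$ a.e., not $\vartheta_2\ge 1$, but the modular--norm equivalence $\rho(v)=1\iff\|v\|_{L^{\vartheta_2}}=1$ and the monotonicity and homogeneity of the Luxemburg functional used in your final estimate require only $0<\vartheta_2^-\le\vartheta_2^+<\infty$, so the argument goes through. (A purely cosmetic point: reusing the symbol $\mu$, already reserved in this paper for the Choquard kernel exponent, is best avoided.)
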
}
\noi Note that the above lemma also holds if we replace $\Om$ by $\RR^N$.
\par Next we recall the fractional Sobolev spaces with variable order and variable exponents (see {\cite{rs}}). 
First denote $	W^{s(\cdot,\cdot),\ol p(\cdot),p(\cdot,\cdot)}(\Om)=W$ and then define
{\begin{align*}
W:=\Big \{ u\in L^{\overline{p}(\cdot)}(\Om):
	\int_{\Om}\int_{\Om}\frac{| u(x)-u(y)|^{p(x,y)}}{\eta^{p(x,y)}| x-y |^{N+s(x,y)p(x,y)}}dxdy<\infty,
	\text{ for some }\eta>0\Big\}
	\end{align*}}
endowed with the norm
{$$
\|u\|_{W}:=\inf \Big \{\eta>0: \rho_W\left(\frac{u}{\eta}\right) <1 \Big \},
$$}
where $$\rho_{W}(u):=\int_{\Om}\left|u\right|^{\ol{p}(x)}d x+
\int_{\Om}\int_{\Om}
\frac{|u(x)-u(y)|^{p(x,y)}}{|x-y|^{N+s(x,y)p(x,y)}}
\,d xd y$$ is a modular on $W.$
Then, $(W, \|\cdot\|_W)$ is a separable reflexive Banach space  {(see \cite{rs})}. 
On $W$ we also make use of the following norm
$$
|u|_{W}:=\|u\|_{L^{\ol{p}(\cdot)}(\Om)}+[u]_{W},
$$
where the seminorm $[\cdot]_W$ is defined as follows:
{$$[u]_{W}:=\inf \Big\{\eta>0:
\int_{\Om}\int_{\Om}
\frac{|u(x)-u(y)|^{p(x,y)}}{\eta^{p(x,y)}|x-y|^{N+s(x,y)p(x,y)}}
\,d x d y <1 \Big \}.
$$ }Note that $\|\cdot\|_{W}$ and $|\cdot|_{W}$ are equivalent norms on $W$  with the relation
{\begin{equation}\label{equivalency}
\frac{1}{2}\|u\|_{W}\leq |u|_{W}\leq 2\|u\|_{W}, \ \  \text{~for~all~} u\in W.
\end{equation}}
\begin{remark}
	If we substitute $\Om,$ a smooth bounded  domain, by  $\RR^N$, all the above results regarding the   fractional Sobolev spaces with variable order and variable exponents hold.
\end{remark}
\noi We define the subspace ${X}_0$ of $W^{s(\cdot,\cdot),\ol{p}(\cdot),p(\cdot,\cdot)}(\RR^N)$ as 
{$${ X_0}={ X}_0^{s(\cdot,\cdot),\overline{p}(\cdot),p(\cdot,\cdot)}(\Omega):=\{u\in W^{s(\cdot,\cdot),\ol{p}(\cdot),p(\cdot,\cdot)}(\RR^N)\;:\; u=0\;a.e.\; in\;\Omega^c\}$$} which is endowed with the following norm :
{\begin{align*}
\| u\|_{{ X}_0}:=\inf\Big\{\eta>0:\rho_{X_0}\left(\frac{u}{\eta}\right)<1\Big\},
\end{align*}}
where $\rho_{X_0}(u):=
\DD\int_{\RR^N}\int_{\RR^N}
\frac{|u(x)-u(y)|^{p(x,y)}}{|x-y|^{N+s(x,y)p(x,y)}}
\,d xd y$ is a convex modular on $ X_0.$
\noi We have the following embedding result which is studied in \cite{rs}. 
\begin{theorem}\label{prp 3.3}
	Let $\Om$ be a smooth bounded domain in $\RR^N, N\geq2$
	and $s(\cdot,\cdot)$ and $p(\cdot,\cdot)$ satisfy $(S1)$ and $(P1),$ respectively. 
	Then for any  $\gamma\in C_+(\overline{\Om})$  with $1<\gamma(x)< p_s^*(x)$ for all $x\in \overline{\Om}$,
	there exits a constant $C=C(N,s,p,\gamma,\Omega)>0$
	such that for every $u\in{ X}_0$, 
	\begin{equation*}
	\| u \|_{L^{\gamma(x)}(\Omega)}\leq C \| u \|_{{ X}_0}.
	\end{equation*}
	Moreover, this embedding is
	compact.
\end{theorem}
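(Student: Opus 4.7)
The plan is to reduce the variable-order, variable-exponent setting to the classical constant-exponent fractional Sobolev embedding by localization: first embed $X_0$ continuously into a constant-exponent Slobodeckij space $W^{s_0,p_0}(\Om)$ for suitably chosen constants $s_0<s^{-}$ and $p_0<p^{-}$, then invoke the classical compact embedding $W^{s_0,p_0}(\Om)\hookrightarrow\hookrightarrow L^{r}(\Om)$, and finally upgrade the constant target $L^{r}$ to the variable target $L^{\gamma(\cdot)}$ via a finite cover of $\overline{\Om}$ chosen so that the pointwise gap $\gamma(x)<p_s^{*}(x)$ becomes a uniform gap on each patch.

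For the continuous embedding $X_0\hookrightarrow W^{s_0,p_0}(\Om)$, fix $s_0\in(0,s^{-})$ and $p_0\in(1,p^{-})$, and take $u\in X_0$ with $\rho_{X_0}(u)\leq 1$ (by Proposition \ref{norm-mod} it is enough to prove the estimate in this case and then rescale). Split the Gagliardo integral over $\Om\times\Om$ according to whether $|u(x)-u(y)|\leq 1$ or $>1$: on the small-difference set, $p(x,y)\geq p_0$ gives $|u(x)-u(y)|^{p_0}\geq|u(x)-u(y)|^{p(x,y)}$, and the boundedness $|x-y|\leq\textup{diam}\,\Om$ lets one compare the $(s_0,p_0)$-kernel to the $(s,p)$-kernel up to a universal constant; on the large-difference set a Young-type bound together with the finite measure $|\Om|^{2}$ controls the contribution. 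The lower-order $\|u\|_{L^{p_0}(\Om)}$ piece requires a Poincar\'e-type inequality for $X_0$: the crossed term $\int_{\Om}\int_{\Om^{c}}|u(x)|^{p(x,y)}/|x-y|^{N+s(x,y)p(x,y)}\,dy\,dx$ present in $\rho_{X_0}(u)$ (because $u\equiv 0$ on $\Om^{c}$) controls $\|u\|_{L^{\overline{p}(\cdot)}(\Om)}$, after which the bounded-domain inclusion $L^{\overline{p}(\cdot)}(\Om)\hookrightarrow L^{p_0}(\Om)$ completes the estimate.

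The classical result (see \cite{hitchhiker}) then yields $W^{s_0,p_0}(\Om)\hookrightarrow\hookrightarrow L^{r}(\Om)$ for every $r\in\bigl[1,\tfrac{Np_0}{N-s_0p_0}\bigr)$. To pass to $L^{\gamma(\cdot)}(\Om)$ under the pointwise hypothesis $\gamma(x)<p_s^{*}(x)$, for each $x_0\in\overline{\Om}$ the joint continuity of $\gamma$, $\overline{s}$, $\overline{p}$ furnishes a ball $B(x_0,r_{x_0})$ on which $\sup\gamma<\inf p_s^{*}$, and compactness of $\overline{\Om}$ gives a finite subcover $B_1,\dots,B_k$. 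On each $B_i\cap\Om$ choose constants $s_0^{(i)},p_0^{(i)}$ with $\tfrac{Np_0^{(i)}}{N-s_0^{(i)}p_0^{(i)}}>\gamma^{+}|_{B_i}$, apply the local version of Step 1 and the classical compact embedding with some $r_i>\gamma^{+}|_{B_i}$, and use the bounded-domain inclusion $L^{r_i}(B_i\cap\Om)\hookrightarrow L^{\gamma(\cdot)}(B_i\cap\Om)$ to obtain compact embedding on each patch; summing over the finite cover delivers compact embedding into $L^{\gamma(\cdot)}(\Om)$. The principal obstacle is the modular comparison underlying Step 1, since $|u(x)-u(y)|^{p(x,y)}$ and $|u(x)-u(y)|^{p_0}$ compare in opposite directions depending on whether $|u(x)-u(y)|$ is small or large, forcing one to treat the two regimes separately and to exploit the boundedness of $|x-y|$ together with Young's inequality to combine them; a secondary subtlety is the Poincar\'e-type inequality for $X_0$, which relies crucially on the crossed $\Om\times\Om^{c}$ contribution in $\rho_{X_0}$ and on the geometry of $\Om$ near its boundary.
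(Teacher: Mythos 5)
Your overall architecture (embed $X_0$ into a constant-exponent Slobodeckij space, invoke the classical compact embedding from \cite{hitchhiker}, then localize by a finite cover to absorb the pointwise strict inequality $\gamma(x)<p_s^*(x)$) is a reasonable route, and the Poincar\'e step and the finite-cover step are in principle sound. Note, though, that this paper does not prove Theorem~\ref{prp 3.3}; it is quoted from the authors' earlier work \cite{rs}, so there is no in-text proof to compare against.

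The genuine gap is in the key continuous embedding $X_0\hookrightarrow W^{s_0,p_0}(\Om)$. You have the two regimes reversed. On $\{|u(x)-u(y)|>1\}$ the inequality $|u(x)-u(y)|^{p_0}\leq|u(x)-u(y)|^{p(x,y)}$ (using $p_0<p^-\leq p(x,y)$), combined with the bounded-domain kernel comparison $|x-y|^{-(N+s_0p_0)}\leq(1+\mathrm{diam}\,\Om)^{s^+p^+}\,|x-y|^{-(N+s(x,y)p(x,y))}$, closes that regime pointwise -- no Young inequality or finite-measure argument is needed there, and in fact $\int_{\Om}\int_{\Om}|x-y|^{-(N+s_0p_0)}\,dxdy=+\infty$, so a bound of the form ``Young plus $|\Om|^2$'' would not even be finite. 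The \emph{small}-difference set $\{|u(x)-u(y)|\leq 1\}$ is the hard regime: there $|u(x)-u(y)|^{p_0}\geq|u(x)-u(y)|^{p(x,y)}$, which is the wrong direction, and the kernel comparison alone cannot rescue a pointwise bound. The standard (and, as far as I can tell, necessary) device is H\"older's inequality in variable-exponent Lebesgue spaces on $\Om\times\Om$: write
\begin{equation*}
\frac{|u(x)-u(y)|^{p_0}}{|x-y|^{N+s_0p_0}}
=\frac{|u(x)-u(y)|^{p_0}}{|x-y|^{\frac{(N+s(x,y)p(x,y))p_0}{p(x,y)}}}\cdot
|x-y|^{\frac{(N+s(x,y)p(x,y))p_0}{p(x,y)}-(N+s_0p_0)},
\end{equation*}
apply Proposition~\ref{Holder} with the conjugate pair $\alpha(x,y)=p(x,y)/p_0$ and $\alpha'(x,y)=p(x,y)/(p(x,y)-p_0)$, and check that the second factor lies in $L^{\alpha'(\cdot,\cdot)}(\Om\times\Om)$ precisely when $s_0<s^-$ (the exponent on $|x-y|$ after raising to the $\alpha'$ power is $\frac{p_0p(x,y)(s(x,y)-s_0)}{p(x,y)-p_0}-N>-N$). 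That single H\"older step handles both regimes at once, replacing your case split entirely. As written, your proposal identifies the obstacle in its closing sentence but does not actually overcome it.
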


\begin{remark} For $u\in X_0,$ from the proof of the  Theorem \ref{prp 3.3},  we get {$$\|u\|_{X_0}\leq\|u\|_{W^{s(\cdot,\cdot),\ol{p}(\cdot),p(\cdot,\cdot)(\RR^N)}}\leq C\|u\|_{X_0},$$} that is, these two norms are equivalent on $X_0.$ Since  $X_0$ is closed subspace of
	the separable reflexive Banach space $W^{s,\ol{p}(\cdot),p(\cdot,\cdot)}(\RR^N)$ with respect to  $\|\cdot\|_{W^{s,\ol{p}(\cdot),p(\cdot,\cdot)}(\RR^N)},$ we have that $(X_0,\|\cdot\|_{X_0})$ is separable reflexive Banach space.\end{remark}
\noi If $(V1)$ holds true, we define the following space 
{$$E=\Big\{u\in X_0:\int_{\Om}\frac{V(x)|u|^{\p}}{\eta^\p}dx<+\infty, \text {~for some~} \eta>0\Big\}$$  }
equipped with the norm {$$
\|u\|_{E}:=\inf \Big \{\eta>0: \rho_E\left(\frac{u}{\eta}\right) <1 \Big \},
$$}
where {$$\rho_{E}(u):=
\int_{\RR^N}\int_{\RR^N}
\frac{|u(x)-u(y)|^{p(x,y)}}{|x-y|^{N+s(x,y)p(x,y)}}
\,d xd y+\int_{\Om}V(x)\left|u\right|^{\ol{p}(x)}d x$$} defines a convex modular in $E.$
{On $E$ we also can make use of the  the following norm
	{$$
	|u|_{E}:=[u]_V+\|u\|_{X_0},
	$$} where {$$[u]_V=\DD\inf\Big\{\eta>0:\int_\Om V(x)\frac{|u(x)|^{\p}}{\eta^{\p}}dx<1\Big\}.$$ }One can easily verify that $\|\cdot\|_{E}$ and $|\cdot|_{E}$ are equivalent norms on $E$  with the relation
	\begin{equation}\label{norm-equiv}
	\frac{1}{2}\|u\|_{E}\leq |u|_{E}\leq 2\|u\|_{E}, \ \  \text{~for~all~} u\in E.
	\end{equation}
\noi	Next we can prove the following results similarly as in {\cite{fan}.}
{\begin{proposition} \label{norm-modular} For $u\in E\setminus\{0\},$ we have 
		\begin{enumerate}
			\item[\rm{(i)}]   $\eta=\|u\|_{E}$ if and only if \ $\rho_E(\frac{u}{\eta})=1;$
			\item[\rm{(ii)}] $\rho_E(u)>1$ $(=1;\ <1)$ if and only if \ $\|u\|_{E}>1$ $(=1;\ <1)$,
			respectively;	
			\item[\rm{(iii)}] if $\|u\|_{E}\geq 1$, then $
			\|u\|_{E}^{p^-}\le\rho_E(u)\le
			\|u\|_{E}^{p^+}$;
			\item[\rm{(iv)}] if $\|u\|_{E}<1$, then $
			\|u\|_{E}^{p^+}\le \rho_E(u)\le
			\|u\|_{E}^{p^-}$.
		\end{enumerate}
\end{proposition}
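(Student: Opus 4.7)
The plan is to mirror the standard derivation of the modular--norm relation for variable exponent Lebesgue / Musielak--Orlicz spaces (Proposition \ref{norm-mod} and \cite{fan}), adapted to the sum-type convex modular
\[
\rho_E(u)=\int_{\RR^N}\int_{\RR^N}\frac{|u(x)-u(y)|^{p(x,y)}}{|x-y|^{N+s(x,y)p(x,y)}}\,dx\,dy+\int_\Om V(x)|u(x)|^{\overline{p}(x)}\,dx.
\]
The structural facts I would rely on are that $\rho_E$ is convex with $\rho_E(0)=0$, that $\rho_E(\lambda u)\to 0$ as $\lambda\to 0^+$ for every $u\in E$ (so $\rho_E$ is finite along some initial segment of every ray from the origin in $E$), and that, by $(S1)$--$(P1)$, both exponents $p(x,y)$ and $\overline{p}(x)=p(x,x)$ take values in $[p^-,p^+]$.

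For (i), fix $u\in E\setminus\{0\}$ and study $\varphi_u(\eta):=\rho_E(u/\eta)$ for $\eta>0$. Using dominated convergence on each term (the $V$-weighted integrand dominated on a small neighborhood of a reference $\eta_0>0$ by the corresponding integrand at $\eta_0/2$, and similarly for the double integral), $\varphi_u$ is continuous on its finite range; it is strictly decreasing where positive, and satisfies $\varphi_u(\eta)\to 0$ as $\eta\to\infty$ and $\varphi_u(\eta)\to\infty$ as $\eta\to 0^+$. By the intermediate value theorem there is a unique $\eta^\ast>0$ with $\varphi_u(\eta^\ast)=1$, and then $\{\eta>0:\varphi_u(\eta)<1\}=(\eta^\ast,\infty)$, whence the Luxemburg infimum equals $\eta^\ast$; this gives (i) in both directions.

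For (ii)--(iv) I would invoke the scaling estimates, valid termwise because $p^-\leq p(x,y),\overline{p}(x)\leq p^+$: for $\lambda\geq 1$,
\[
\lambda^{p^-}\rho_E(u)\leq\rho_E(\lambda u)\leq\lambda^{p^+}\rho_E(u),
\]
with the inequalities reversed for $0<\lambda\leq 1$. Assertion (ii) then follows from (i) by evaluating $\varphi_u$ at $\eta=1$ and using monotonicity. For (iii), when $\|u\|_E\geq 1$, put $v:=u/\|u\|_E$, so $\rho_E(v)=1$ by (i); then $\rho_E(u)=\rho_E(\|u\|_E\,v)$ lies between $\|u\|_E^{p^-}$ and $\|u\|_E^{p^+}$ by the first scaling pair applied with $\lambda=\|u\|_E$. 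For (iv), when $\|u\|_E<1$, the same substitution combined with the reversed scaling yields $\|u\|_E^{p^+}\leq\rho_E(u)\leq\|u\|_E^{p^-}$. The only point requiring a moment's attention is that the two pieces of $\rho_E$ are a priori controlled by different exponents $p(x,y)$ and $\overline{p}(x)$, but since both are pinned in $[p^-,p^+]$ the scaling estimates apply uniformly to both terms; hence the argument reduces to careful bookkeeping over the classical $L^{\Theta(\cdot)}$ case, and no essentially new obstacle arises.
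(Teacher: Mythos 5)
Your proposal is correct and is precisely the standard Fan--Zhao argument that the paper itself invokes (the paper gives no proof, merely noting that the result follows ``similarly as in \cite{fan}''); you have simply executed that argument in detail, correctly observing that both exponent functions $p(\cdot,\cdot)$ and $\overline{p}(\cdot)$ lie in $[p^-,p^+]$ so the termwise scaling bounds apply uniformly to both pieces of the sum modular $\rho_E$.
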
}

\noi As a consequence of the above proposition, we can derive the following result.
\begin{proposition}\label{norm-convergence}
Let $u,u_{n}  \in E,n\in\mathbb N.$
Then	the following statements are equivalent:
	{	\begin{enumerate}
						\item[\rm(i)] 
			$\DD{\lim_{n\ra \infty} }\| u_{n} - u \|_{ E} =0.$ 
			\item[\rm(ii)] 
			$\DD{\lim_{n\ra \infty}} \rho_E(u_{n} -u)=0.$
	\end{enumerate}}
\end{proposition}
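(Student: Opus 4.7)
The plan is to derive Proposition \ref{norm-convergence} directly from Proposition \ref{norm-modular} applied to the sequence $v_n := u_n - u$, exploiting the two-sided comparison between the norm $\|\cdot\|_E$ and the modular $\rho_E$ that switches regime according to whether the norm is above or below $1$. The argument is completely parallel to the proof of Proposition \ref{norm-mod}(v) in the Lebesgue setting, since Proposition \ref{norm-modular} gives exactly the same modular-norm inequalities that drive the Lebesgue-space proof.

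For the implication (i) $\Rightarrow$ (ii), I would argue as follows. Assume $\|v_n\|_E \to 0$. Then there exists $n_0 \in \mathbb N$ such that $\|v_n\|_E < 1$ for all $n \geq n_0$. Applying Proposition \ref{norm-modular}(iv) to each such $v_n \in E \setminus \{0\}$ (the case $v_n = 0$ being trivial since $\rho_E(0)=0$), we obtain
\[
\rho_E(v_n) \leq \|v_n\|_E^{p^-} \quad \text{for all } n \geq n_0,
\]
and since $p^- > 1$, the right-hand side tends to $0$, giving $\rho_E(v_n) \to 0$.

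For the converse (ii) $\Rightarrow$ (i), assume $\rho_E(v_n) \to 0$. First, since $\rho_E(v_n) \to 0$, there is $n_1$ such that $\rho_E(v_n) < 1$ for all $n \geq n_1$; by Proposition \ref{norm-modular}(ii) this forces $\|v_n\|_E < 1$ for all such $n$ (otherwise $\|v_n\|_E \geq 1$ would imply $\rho_E(v_n) \geq 1$ via part (iii)). Then, applying Proposition \ref{norm-modular}(iv) again,
\[
\|v_n\|_E^{p^+} \leq \rho_E(v_n) \quad \text{for all } n \geq n_1,
\]
so $\|v_n\|_E \leq \rho_E(v_n)^{1/p^+} \to 0$, which is the desired conclusion.

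There is no real obstacle here; the only points requiring a little care are excluding the degenerate case $v_n = 0$ (handled trivially) and making sure one is in the correct branch of Proposition \ref{norm-modular} before invoking an inequality, which is why I separate the argument into the ``eventually below $1$'' regime in both directions. The inequalities in Proposition \ref{norm-modular} fold the behaviour of both the Gagliardo-type double integral and the weighted $L^{\overline p(\cdot)}$ term into a single modular-vs-norm comparison, so no separate analysis of the two pieces of $\rho_E$ is needed.
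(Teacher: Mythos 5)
Your argument is correct and is essentially the intended derivation: the paper states Proposition \ref{norm-convergence} as an immediate ``consequence'' of Proposition \ref{norm-modular} without supplying details, and what you wrote is exactly the standard two-sided modular/norm comparison that fills that gap. Both directions, the use of part (ii) to locate the regime and part (iv) to convert convergence, and the explicit handling of the zero case, are all correct; the only cosmetic quibble is that noting $p^->1$ is unnecessary for (i) $\Rightarrow$ (ii), since $\|v_n\|_E^{p^-}\to 0$ follows as soon as $\|v_n\|_E\to 0$ with $p^->0$.
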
	
	\begin{lemma}$(E,\|\;\cdot\;\|_E)$ is a separable reflexive Banach space.
	\end{lemma}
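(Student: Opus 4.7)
The strategy is to realize $(E,\|\cdot\|_E)$ as a closed subspace of a product of two known separable reflexive Banach spaces via an isometric embedding, from which both properties (as well as completeness) are inherited. Throughout, I will use the equivalent norm $|u|_E=\|u\|_{X_0}+[u]_V$ given by \eqref{norm-equiv}, which keeps the two ingredients of the modular cleanly separated.

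First, I would introduce the weighted variable-exponent Lebesgue space
$$
L^{\bar p(\cdot)}(\Om,V\,dx):=\Big\{u:\Om\to\RR \text{ measurable}:\int_{\Om}V(x)\Big|\tfrac{u(x)}{\eta}\Big|^{\bar p(x)}dx<\infty \text{ for some }\eta>0\Big\},
$$
modulo the equivalence $u\sim v\iff V(u-v)=0$ a.e., and endow it with the Luxemburg norm $[\cdot]_V$ defined in the excerpt. Since $1<p^-\le\bar p(x)\le p^+<\infty$ and $V\,dx$ is a finite positive Borel measure on the bounded domain $\Om$, standard variable-exponent theory (see \cite{diening,fan,radulescu1}) guarantees that this space is a separable, uniformly convex, hence reflexive Banach space, with Proposition \ref{norm-mod}-type properties relating the norm and the modular $u\mapsto\int_\Om V|u|^{\bar p(x)}dx$. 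The space $(X_0,\|\cdot\|_{X_0})$ is already known to be separable and reflexive by the remark following Theorem \ref{prp 3.3}.

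Next, I would consider the linear map
$$
T:E\longrightarrow X_0\times L^{\bar p(\cdot)}(\Om,V\,dx),\qquad T(u):=(u,\,u|_{\Om}),
$$
with the product norm $\|(v,w)\|:=\|v\|_{X_0}+[w]_V$. By construction, $\|T(u)\|=\|u\|_{X_0}+[u]_V=|u|_E$, so $T$ is a linear isometry from $(E,|\cdot|_E)$ onto its image. The heart of the argument is to check that $T(E)$ is \emph{closed} in the product: if $T(u_n)=(u_n,u_n|_{\Om})\to(u,w)$ in the product norm, then $u_n\to u$ in $X_0$, and by Theorem \ref{prp 3.3} (compact embedding into some $L^{\gamma(\cdot)}(\Om)$) we may pass to a subsequence converging a.e.\ on $\Om$, while $u_n|_{\Om}\to w$ with respect to the modular $\int_\Om V|\cdot|^{\bar p(x)}dx$ also admits an a.e.\ subsequence. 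Comparing the two a.e.\ limits on the set $\{V>0\}$ forces $w=u|_{\Om}$ in $L^{\bar p(\cdot)}(\Om,V\,dx)$, so $(u,w)=T(u)\in T(E)$. Thus $T(E)$ is closed.

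Since finite products and closed subspaces of separable reflexive Banach spaces are again separable and reflexive, $T(E)$ has both properties; pulling them back through the isometry $T$, so does $(E,|\cdot|_E)$, and hence $(E,\|\cdot\|_E)$ by \eqref{norm-equiv}. Completeness of $E$ is obtained for free: $T(E)$ is complete as a closed subspace of a Banach space, and isometries preserve Cauchy sequences and their limits. The step I expect to be most delicate is the closedness of $T(E)$: one must carefully reconcile a.e.\ limits coming from the two different topologies (the Gagliardo-type modular on $\RR^N\times\RR^N$ and the weighted modular on $\Om$) on the possibly proper subset $\{V>0\}\subset\Om$, and check that the remainder of $\Om$ contributes nothing to $[\cdot]_V$, so that $w$ is genuinely identified with $u|_\Om$ as an element of $L^{\bar p(\cdot)}(\Om,V\,dx)$.
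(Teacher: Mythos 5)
Your overall strategy is the same one the paper uses: view $E$ via an isometric embedding into a product of two known separable reflexive Banach spaces and then inherit the three properties by showing the image is closed. The two proofs differ, though, in two respects worth noting. First, the codomain: the paper sends $u\mapsto\big(V^{1/\bar p(\cdot)}u,\ |u(x)-u(y)|/|x-y|^{s(x,y)+N/p(x,y)}\big)$ into the \emph{unweighted} product $L^{\bar p(\cdot)}(\Omega)\times L^{p(\cdot,\cdot)}(\mathbb R^N\times\mathbb R^N)$, whereas you send $u\mapsto(u,u|_\Omega)$ into $X_0\times L^{\bar p(\cdot)}(\Omega,V\,dx)$; these are equivalent because $[u]_V=\|V^{1/\bar p(\cdot)}u\|_{L^{\bar p(\cdot)}(\Omega)}$, but the paper's choice has the advantage that one does not need to invoke (or develop) the theory of weighted variable-exponent Lebesgue spaces — it lands directly in the standard ones cited in Proposition \ref{norm-mod}. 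Second, the logical order: the paper first proves that $E$ is complete by a Fatou-lemma argument on Cauchy sequences (using the completeness of $X_0$ and an a.e.\ subsequence), and \emph{then} reads off that $T(E)$ is closed because it is the isometric image of a complete space; you instead prove closedness of $T(E)$ directly by comparing a.e.\ subsequential limits on $\{V>0\}$, and obtain completeness of $E$ as a by-product. Both orderings are legitimate and of comparable length. One small inaccuracy in your write-up: under $(V1)$ the function $V$ is only assumed continuous and nonnegative on the open bounded set $\Omega$, so $V$ may be unbounded near $\partial\Omega$ and $V\,dx$ need not be a finite measure; luckily your argument only needs $\sigma$-finiteness and the standard separability/reflexivity of the Luxemburg space, so the conclusion stands, but I would drop the finiteness claim. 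Also, when reconciling the two a.e.\ limits, make sure to pass to a single common subsequence (first extract along the $X_0$-convergence, then refine along the modular convergence), which is routine but worth stating.
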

	
	\begin{proof}
		First we show that $(E,\|\;\cdot\;\|_E)$
		is a Banach space. For that,	let $\{u_n\}$ be a Cauchy sequence in $E.$ Therefore for any $\e>0$ there exists $N_\e\in\mathbb N$ such that if $n,k\geq N_\e$ \begin{align}\label{0.1}
		\|u_n-u_k\|_E\leq \e.
		\end{align}
		Since $\|u\|_E\geq\|u\|_{X_0}$ and $(X_0,\|\;\cdot\;\|_{X_0})$ is a Banach space, there exists $u\in X_0$ such that $u_n\to u$ in $X_0$ strongly as $\gt.$ So, there exists a subsequence $\{u_{n_j}\}$ such that $u_{n_j}(x)\to u(x)$ a.e. $x\in\RR^N.$ Now using Fatou's lemma and \eqref{0.1} with $\e=1,$ we have
		{\begin{align}
		\int_{\Om}V(x)|u(x)|^\p dx&\leq \DD\liminf_{n\to\infty}\int_{\Om}V(x)|u_n(x)|^\p dx\n
		&\leq \DD\liminf_{n\to\infty}\int_{\Om}V(x)|u_n(x)-u_{N_1}(x)-u_{N_1}(x)|^\p dx\n
		&\leq 2^{p^+}\DD\liminf_{n\to\infty}\Big[\int_{\Om}V(x)|u_n(x)-u_{N_1}(x)|^\p+\int_{\Om}V(x)|u_{N_1}(x)|^\p dx\Big]\n
		&\leq 2^{p^+}\Big[1+\int_{\Om}V(x)|u_{N_1}(x)|^\p dx\Big]<\infty.
		\end{align}}
		Therefore $u\in E.$ 
		Now again by Fatou's lemma and \eqref{0.1}, we get for all $n,n_j\geq N_\e$ \begin{align}
		\rho_E (u_n-u)\leq \DD\liminf_{j\to\infty} \rho_E (u_n-u_{n_j})\leq\e
		\end{align} and then Proposition \ref{norm-convergence} infers $u_n\to u.$ Hence  $(E,\|\;\cdot\;\|_E)$ is a Banach space. For proving reflexivity of $E$ we define the  map  
$T:E\to L^{\ol p(\cdot)}(\Om)\times L^{ p(\cdot,\cdot)}(\R)$ as {$${ T(u)=\bigg(V^{1/\p}u,~ \frac{|u(x-u(y)|}{|x-y|^{s(x,y)+\frac{N}{p(x,y)}}}\bigg).}$$}
	The norm on $L^{\ol p(\cdot)}(\Om)\times L^{ p(\cdot,\cdot)}(\R)$ is given as $$\|u\|=\|u\|_{L^{\ol p(\cdot)}(\Om)}+\|u\|_{L^{ p(\cdot,\cdot)}(\R)}.$$ Clearly $T$ is an isometry. Hence $T(E)$ is reflexive being a closed subspace of the reflexive Banach space $L^{\ol p(\cdot)}(\Om)\times L^{ p(\cdot,\cdot)}(\R)$ 
	 (see { \cite[Proposition 3.20]{brezis}}) and consequently $E$ is reflexive. 
	Arguing similarly, we get $E$ is separable (see \cite[Proposition 3.25]{brezis}). 
	\end{proof}

\noi Using Theorem \ref{prp 3.3} and the fact $\|u\|_E\geq\|u\|_{X_0},$ we  have the following embedding theorem.
\begin{theorem}\label{cpt}Let $\Om\subset\RR^N, N\geq2$ be a smooth bounded domain and let
	$(S1),(P1)$ and $(V1)$ hold. 	Then for any  $\gamma\in C_+(\overline{\Om})$  with $1<\gamma(x)< p_s^*(x)$ for all $x\in \overline{\Om}$,
	there exits a constant $C=C(N,s,p,\gamma,\Omega)>0$
	such that for every $u\in E$, 
	\begin{equation*}
	\| u \|_{L^{\gamma(x)}(\Omega)}\leq C \| u \|_{E}.
	\end{equation*}
	Moreover, this embedding is
	compact.
\end{theorem}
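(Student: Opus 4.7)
The plan is to reduce Theorem \ref{cpt} to the already proved embedding Theorem \ref{prp 3.3} via a norm comparison, rather than redoing the fractional Sobolev analysis. The key observation is that the modular defining $\|\cdot\|_E$ differs from the modular defining $\|\cdot\|_{X_0}$ only by the nonnegative term $\int_\Omega V(x)|u|^{\ol p(x)}dx$, so $E$ is continuously embedded in $X_0$ with constant $1$.

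First I would verify that $\|u\|_{X_0}\le\|u\|_E$ for all $u\in E$. By $(V1)$, $V(x)\ge 0$ on $\Omega$, hence $\rho_{X_0}(u)\le \rho_E(u)$ pointwise in $u$. Consequently, for any $\eta>0$, $\rho_E(u/\eta)<1$ implies $\rho_{X_0}(u/\eta)<1$, and passing to infima over admissible $\eta$ in the Luxemburg definitions gives the inequality. Combining this with Theorem \ref{prp 3.3}, which supplies a constant $C=C(N,s,p,\gamma,\Omega)$ such that $\|u\|_{L^{\gamma(x)}(\Omega)}\le C\|u\|_{X_0}$ whenever $1<\gamma(x)<p_s^*(x)$, yields
\[
\|u\|_{L^{\gamma(x)}(\Omega)}\le C\|u\|_{X_0}\le C\|u\|_{E},
\]
establishing the continuous embedding.

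For the compactness assertion, I would take an arbitrary sequence $\{u_n\}\subset E$ bounded in the $\|\cdot\|_E$-norm. By the previous inequality, $\{u_n\}$ is also bounded in $(X_0,\|\cdot\|_{X_0})$. The compactness statement in Theorem \ref{prp 3.3} then furnishes a subsequence, still denoted $\{u_n\}$, and some $u\in L^{\gamma(x)}(\Omega)$ with $u_n\to u$ strongly in $L^{\gamma(x)}(\Omega)$. Since this holds for every bounded sequence in $E$, the embedding $E\hookrightarrow L^{\gamma(x)}(\Omega)$ is compact.

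There is essentially no substantive obstacle here: the theorem is a direct corollary, and the only delicate point is the (elementary) monotonicity of Luxemburg norms with respect to their modulars, which I would spell out once in the course of showing $\|u\|_{X_0}\le \|u\|_E$. One should simply take care that the definition of $E$ forces $u\equiv 0$ on $\Omega^c$ (inherited from $X_0$), so that the $L^{\gamma(x)}$-norm on $\Omega$ really does control the full object.
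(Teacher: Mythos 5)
Your proposal is exactly the paper's argument: the authors state that Theorem~\ref{cpt} follows from Theorem~\ref{prp 3.3} together with the elementary inequality $\|u\|_{X_0}\leq\|u\|_E$ (which holds because $V\geq 0$ makes $\rho_{X_0}(u)\leq\rho_E(u)$ and hence shrinks the admissible set in the Luxemburg infimum). Both the continuity and compactness statements are inherited in precisely the way you describe, so there is nothing to add.
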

\noi The function space $E$ is the solution space for the problem \eqref{mainprob}.
 The energy
functional $J$ associated to \eqref{mainprob} is defined as follows. For $u\in E$, we set
{\begin{itemize}
		\item $\DD\sigma(u):={ \A+\B.}$
		\item $\DD\Psi(u):={ \frac{1}{2}\F.}$
		\item $\DD I(u):={ \f.}$
\end{itemize}}
\begin{definition}\label{energy}
	The energy functional  $J:E\ra\RR$ associated with problem \ref{mainprob}
	is defined as 
	$$J(u)=M(\sigma(u))-\Psi(u).$$
\end{definition}
\noi	To establish the smoothness of the energy functional $J$  we first recall the following  Hardy-Littlewood-Sobolev type result 
( \cite[{Proposition 4.1}]{rs}).
\begin{proposition}\label{HLS} Let $(\mu1)$ hold and let $q \in C_{+}(\R)$ be a 
	continuous function satisfying 
	\begin{equation*}
	\frac{2}{q(x,y)} + \frac{\mu(x,y)}{N}=2, \quad \text { for all } x,y \in \mathbb{R}^N.
	\end{equation*}
	If $h,g\in L^{q^-}(\mathbb{R}^N) \cap L^{q^+}(\mathbb{R}^N)$
	then
{	\begin{align*}
	\displaystyle \bigg| \int_{\mathbb{R}^N}\int_{\mathbb{R}^N} \frac{h(x) g(y)}{|x-y|^{\mu(x,y)}} dx dy \bigg| \leq  C(N,q,\mu)\bigg(\| h\|_{L^{q^{+}}(\mathbb{R}^N)}\| g\|_{L^{q^{-}}(\mathbb{R}^N)} + \| h\|_{L^{q^{-}}(\mathbb{R}^N)})\| g\|_{L^{q^{+}}(\mathbb{R}^N)}\bigg).
	\end{align*}}
\end{proposition}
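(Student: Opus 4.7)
The plan is to reduce this variable-exponent Hardy--Littlewood--Sobolev inequality to the classical constant-exponent HLS inequality via a splitting of the kernel across the diagonal. Since $\mu$ is continuous, symmetric, and satisfies $0<\mu^-\leq \mu(x,y)\leq \mu^+<N$, one has the pointwise bound
\[\frac{1}{|x-y|^{\mu(x,y)}}\leq \frac{1}{|x-y|^{\mu^+}}\,\chi_{\{|x-y|\leq 1\}}+\frac{1}{|x-y|^{\mu^-}}\,\chi_{\{|x-y|>1\}},\]
the idea being that the largest exponent $\mu^+$ controls the singularity near the diagonal while the smallest exponent $\mu^-$ controls the decay at infinity. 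With this, the original double integral splits into a near-diagonal and a far-diagonal contribution, each involving a kernel with a constant exponent.

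I would then apply the classical Hardy--Littlewood--Sobolev inequality separately to each of the two pieces. Recall that classical HLS states $\iint \tfrac{h(x)g(y)}{|x-y|^\lambda}\,dx\,dy\leq C\|h\|_r\|g\|_t$ whenever $\tfrac{1}{r}+\tfrac{1}{t}+\tfrac{\lambda}{N}=2$ with $r,t\in(1,\infty)$. Evaluating the hypothesis $\tfrac{2}{q(x,y)}+\tfrac{\mu(x,y)}{N}=2$ at the extremal values $\mu^\pm$ gives the matched exponents $q^\pm=\tfrac{2N}{2N-\mu^\pm}$. Choosing $r=t=q^+$ for the $\mu^+$-kernel and $r=t=q^-$ for the $\mu^-$-kernel satisfies HLS admissibility and yields an estimate of the form
\[\iint\frac{|h(x)||g(y)|}{|x-y|^{\mu(x,y)}}\,dx\,dy\leq C_1\|h\|_{L^{q^+}}\|g\|_{L^{q^+}}+C_2\|h\|_{L^{q^-}}\|g\|_{L^{q^-}}.\]
To recast this as the stated cross-product form $\|h\|_{L^{q^+}}\|g\|_{L^{q^-}}+\|h\|_{L^{q^-}}\|g\|_{L^{q^+}}$, I would either use an elementary rearrangement of the four norms (valid since $h,g\in L^{q^-}\cap L^{q^+}$), or redo the HLS step on each region with an asymmetric exponent pair $(r,t)$ lying on the admissibility curve $\tfrac{1}{r}+\tfrac{1}{t}=\tfrac{2}{q^\pm}$, so that the cross form appears directly.

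The main obstacle is keeping the exponent arithmetic consistent so that every application of classical HLS uses an admissible pair in $(1,\infty)\times(1,\infty)$ matching the constraint exactly; the strict positivity $\mu^->0$ from $(\mu 1)$ is essential for the far-diagonal estimate, since otherwise the kernel would fail to decay at infinity and no HLS-type bound could be derived. The continuity and symmetry of $\mu$ enter only in defining the extremal exponents $\mu^\pm$ and in guaranteeing that the final bound is symmetric in the roles of $h$ and $g$; beyond this, the argument is essentially a two-regime comparison with constant-exponent classical theory.
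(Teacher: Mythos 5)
Your near/far-diagonal decomposition of the kernel is exactly the standard device used in the cited reference, and the first three steps are carried out correctly: the pointwise bound $|x-y|^{-\mu(x,y)} \leq |x-y|^{-\mu^+}\chi_{\{|x-y|\le 1\}} + |x-y|^{-\mu^-}\chi_{\{|x-y|>1\}}$ follows from the monotonicity of $t\mapsto |x-y|^{-t}$ on each regime, and classical Hardy--Littlewood--Sobolev applied with the symmetric admissible pairs $(q^+,q^+)$ on the near-diagonal piece and $(q^-,q^-)$ on the far-diagonal piece (both legitimate since $1<q^-\le q^+<2$) yields the ``diagonal'' product form
\begin{align*}
\Big|\int_{\RR^N}\int_{\RR^N}\frac{h(x)g(y)}{|x-y|^{\mu(x,y)}}\,dx\,dy\Big|\leq C\Big(\|h\|_{L^{q^+}(\RR^N)}\|g\|_{L^{q^+}(\RR^N)}+\|h\|_{L^{q^-}(\RR^N)}\|g\|_{L^{q^-}(\RR^N)}\Big).
\end{align*}
This is in fact the form invoked everywhere downstream in the paper: Corollary~\ref{cor1} takes $h=g$, where the two forms coincide, and the estimates \eqref{c24}--\eqref{c25} explicitly pair $\|F(\cdot,u_n)\|_{q^\pm}$ with $\|f(\cdot,u_n)v_n\|_{q^\pm}$ diagonally. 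So your argument proves exactly what is needed.

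The gap is the final step, where you try to recast this as the ``cross'' product $\|h\|_{q^+}\|g\|_{q^-}+\|h\|_{q^-}\|g\|_{q^+}$ appearing in the printed statement (note the stray closing parenthesis in that display). Both of your proposed routes fail. There is no elementary inequality $ab+cd\le K(ad+bc)$ for arbitrary positives: $ab+cd-ad-bc=(a-c)(b-d)$, so the inequality flips sign according to whether $\|h\|_{q^+}\gtrless\|h\|_{q^-}$ and $\|g\|_{q^+}\gtrless\|g\|_{q^-}$ agree, and on $\RR^N$ neither monotonicity is guaranteed. Using an asymmetric admissible pair $(r,t)$ on the near-diagonal piece requires $\tfrac1r+\tfrac1t=\tfrac{2}{q^+}$; but since $q^+=\sup q$, any $r,t\in[q^-,q^+]$ already satisfy $\tfrac1r,\tfrac1t\ge\tfrac1{q^+}$, forcing $r=t=q^+$ as the \emph{only} admissible pair whose norms are controlled from the hypothesis $h,g\in L^{q^-}\cap L^{q^+}$; any genuinely asymmetric admissible pair pushes one exponent outside $[q^-,q^+]$. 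Indeed the cross form is not merely out of reach by this method: it appears to fail outright (take $\mu$ attaining its maximum at a diagonal point $x_0$ and $h=g=\chi_{B_\delta(x_0)}$; the double integral scales like $\delta^{2N-\mu^+}$, which dominates the cross-form bound $\sim\delta^{2N-(\mu^++\mu^-)/2}$ as $\delta\to0^+$ whenever $\mu^+>\mu^-$). The correct and intended conclusion is the diagonal estimate your decomposition naturally produces; you should stop there rather than force the misprinted cross form.
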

\begin{corollary}\label{cor1}
	In particular for 
	$h(x)=g(x)=|u(x)|^{\ba(x)} \in L^{q^-}(\Om) \cap L^{q^+}(\Om),$  {$u\in E,$} we have 
{	$$
	\begin{array}{l}
	\displaystyle \left| \int_{\Om}\int_{\Om} \frac{|u(x)|^{\ba(x)}|u(y)|^{\ba(y)}}{|x-y|^{\mu(x,y)}} dx dy \right| \leq  C(N,q,\mu,\beta)\left(\| |u|^{\ba(\cdot)}\|^{2}_{L^{q^{+}}(\Om)} + \| |u|^{\ba(\cdot)}\|^{2}_{L^{q^{-}}(\Om)}\right),
	\end{array}
	$$}
	where $\beta\in   C_+(\ol\Om)$ such that $1< \ba^- q^-\leq\ba(x) q^{-} \leq \ba(x)q^{+} < p_s^{*}(x), $ for all $x \in \ol \Om.$
\end{corollary}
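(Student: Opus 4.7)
My plan is to derive Corollary~\ref{cor1} as a direct specialization of Proposition~\ref{HLS} to the symmetric choice $h=g=|u|^{\beta(\cdot)}$, after verifying the integrability hypothesis using the compact embedding of Theorem~\ref{cpt} together with Lemma~\ref{lemA1}. The three logical steps I would carry out are: (i)~check that $|u|^{\beta(\cdot)}$ lies in both $L^{q^-}(\Omega)$ and $L^{q^+}(\Omega)$; (ii)~extend by zero outside $\Omega$ and invoke Proposition~\ref{HLS}; (iii)~symmetrize the right-hand side with an elementary AM--GM step.

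For step~(i), since $\beta\in C_+(\ol\Omega)$ and $1<\beta^{-}q^{-}\le\beta(x)q^{\pm}<p_s^{*}(x)$, Theorem~\ref{cpt} gives $E\hookrightarrow L^{\beta(x)q^{\pm}}(\Omega)$, so that $u\in L^{\beta(x)q^{-}}(\Omega)\cap L^{\beta(x)q^{+}}(\Omega)$. Then Lemma~\ref{lemA1}, applied with $\vartheta_1=\beta(\cdot)$ and $\vartheta_2\in\{q^{-},q^{+}\}$ (both constants, so the condition $\vartheta_1\vartheta_2\ge1$ holds by hypothesis), yields
\[
\bigl\| |u|^{\beta(\cdot)}\bigr\|_{L^{q^{\pm}}(\Omega)}\le \|u\|_{L^{\beta(x)q^{\pm}}(\Omega)}^{\beta^{-}}+\|u\|_{L^{\beta(x)q^{\pm}}(\Omega)}^{\beta^{+}}<\infty,
\]
which confirms the integrability hypothesis of Proposition~\ref{HLS}.

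For step~(ii), because $u\equiv0$ on $\Omega^{c}$ (as $u\in E\subset X_{0}$), the integrals over $\Omega\times\Omega$ agree with those over $\mathbb{R}^{N}\times\mathbb{R}^{N}$, and Proposition~\ref{HLS} with $h=g=|u|^{\beta(\cdot)}$ gives
\[
\left|\int_{\Omega}\!\int_{\Omega}\frac{|u(x)|^{\beta(x)}|u(y)|^{\beta(y)}}{|x-y|^{\mu(x,y)}}\,dx\,dy\right|\le 2C(N,q,\mu)\,\bigl\| |u|^{\beta(\cdot)}\bigr\|_{L^{q^{+}}(\Omega)}\bigl\| |u|^{\beta(\cdot)}\bigr\|_{L^{q^{-}}(\Omega)}.
\]
For step~(iii), applying the elementary inequality $2ab\le a^{2}+b^{2}$ to the right-hand side absorbs the cross term into the two squared norms, producing exactly the bound asserted in Corollary~\ref{cor1} with a new constant $C(N,q,\mu,\beta)$.

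I do not foresee any serious obstacle: once the integrability of $|u|^{\beta(\cdot)}$ in both $L^{q^{\pm}}(\Omega)$ is in hand (the one place where the conditions on $\beta$ and $q$ are used), everything reduces to the already-established Hardy--Littlewood--Sobolev-type inequality and a one-line AM--GM estimate. The only mildly delicate point is bookkeeping: ensuring that the symmetric roles of the two norms in Proposition~\ref{HLS} collapse correctly when $h=g$, and that the compact embedding is applied at the exponent $\beta(x)q^{\pm}$ rather than $\beta^{\pm}q^{\pm}$, so that the pointwise growth condition $\beta(x)q^{+}<p_s^{*}(x)$ is actually the one being used.
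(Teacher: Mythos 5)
Your proposal is correct and follows essentially the same route the paper intends: the corollary is a direct specialization of Proposition~\ref{HLS} to $h=g=|u|^{\beta(\cdot)}$, with the membership $|u|^{\beta(\cdot)}\in L^{q^{\pm}}(\Omega)$ guaranteed by the embedding of Theorem~\ref{cpt} at the exponents $\beta(\cdot)q^{\pm}$ together with Lemma~\ref{lemA1}, and the symmetric right-hand side obtained from $2ab\le a^{2}+b^{2}$. Your added bookkeeping (extending by zero since $u\in E\subset X_0$, and noting the embedding is used at $\beta(x)q^{\pm}$ pointwise) is accurate and fills in exactly the details the paper leaves implicit.
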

{\begin{remark}From Proposition \ref{HLS} we can define the variable order and variable exponent Hardy-Littlewood-Sobolev critical exponent as
		{$$p_{s,\mu}^*(x):=\frac{p_s^*(x)}{\ol q(x)}=\frac{\p}{2}\left(\frac{2N-\ol\mu(x)}{N-\ol s(x)\p}\right),$$} where $\ol q(x)=q(x,x)$ and $\ol\mu(x)=\mu(x,x).$\end{remark}}

\begin{lemma} 
	The functional $J$ as defined in the Definition \ref{energy} is of class $C^1$ and for all $u,w\in E.$
	{\begin{align*}
	\langle J'(u),w\rangle&=m(\sigma(u))\bigg[\int_{\RR^N}\int_{\RR^N}\frac{|
		u(x)-u(y)|^{p(x,y)-2}(u(x)-u(y))(w(x)-w(y))}{|
		x-y|^{N+s(x,y)p(x,y)}}dxdy\n&~~~~+\int_{\Om}V(x)|u(x)|^{\p-2}u(x)w(x)dx\bigg]-\int_{\Om}\int_{\Om}\frac{F(y,u(y))f(x,u(x))w(x)}{|x-y|^{\mu(x,y)}}dxdy.
	\end{align*}}
\end{lemma}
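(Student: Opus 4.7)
I plan to write $J = M\circ\sigma - \Psi$ and handle the Kirchhoff and Choquard pieces separately. Since $m$ is continuous on $\RR_0^+$ by $(M1)$, the primitive $M$ is $C^1$, so by the chain rule it suffices to prove that $\sigma:E\to\RR$ is $C^1$ with Gateaux derivative equal to the expected fractional $p(\cdot,\cdot)$-Laplacian plus weighted $L^{\overline p(\cdot)}$ pairing, and that $\Psi:E\to\RR$ is $C^1$ with the expected Choquard derivative. Then $J'(u)$ is obtained as $m(\sigma(u))\,\sigma'(u) - \Psi'(u)$, matching the formula in the statement.

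First I split $\sigma = \Phi_1 + \Phi_2$ where
$\Phi_1(u) = \int_{\RR^N}\int_{\RR^N} \frac{1}{p(x,y)} \frac{|u(x)-u(y)|^{p(x,y)}}{|x-y|^{N+s(x,y)p(x,y)}}\,dx\,dy$
and $\Phi_2(u) = \int_\Om V(x)\frac{|u(x)|^{\overline p(x)}}{\overline p(x)}\,dx$.
For $\Phi_1$, I fix $u,w\in E$ and apply the mean value theorem to $g(t) = |t|^{p(x,y)}/p(x,y)$, whose derivative is $|t|^{p(x,y)-2}t$, to the integrand of the difference quotient $\frac{\Phi_1(u+tw)-\Phi_1(u)}{t}$. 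The pointwise limit as $t\to 0$ yields the integrand of the claimed derivative. For the dominating function, I use the elementary inequality $||a+b|^{p(x,y)-1}|\le C(|a|^{p(x,y)-1}+|b|^{p(x,y)-1})$ combined with the generalized Hölder inequality (Proposition \ref{Holder}) applied in $L^{p(\cdot,\cdot)}$ on $\R$ with the kernel $|x-y|^{-N-s(x,y)p(x,y)}$; this shows that the dominating function is integrable whenever $u,w\in X_0$. Lebesgue dominated convergence then gives Gateaux differentiability. For $\Phi_2$, the same argument applied pointwise in $x$ and dominated via $V(x)|u|^{\overline p(x)-1}|w|$ and Hölder in $L^{\overline p(\cdot)}$ (weighted by $V$) yields the expected derivative.

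For $\Psi$, I first use $(f1)$ to get $|f(x,t)|\le \mathcal C(1+|t|^{r(x)-1})$ and hence $|F(x,t)|\le \mathcal C(|t|+|t|^{r(x)})$. Since $r(x)q^\pm < p_s^*(x)$ by $(f1)$, Theorem \ref{cpt} gives $u\in L^{r(x)q^\pm}(\Om)$, and then Lemma \ref{lemA1} implies $F(\cdot,u)\in L^{q^-}(\Om)\cap L^{q^+}(\Om)$. Thus Proposition \ref{HLS} makes $\Psi(u)$ finite. For Gateaux differentiability of $\Psi$, I write
\begin{align*}
\frac{\Psi(u+tw)-\Psi(u)}{t} = \int_\Om\int_\Om \frac{F(y,u(y)+tw(y))\,\big(F(x,u(x)+tw(x))-F(x,u(x))\big)/t}{|x-y|^{\mu(x,y)}}\,dx\,dy
\end{align*}
(plus a symmetric term, using the symmetry of $\mu$), apply the mean value theorem on the inner difference quotient to obtain the pointwise limit $F(y,u(y))f(x,u(x))w(x)$, and dominate using $(f1)$ together with HLS: the dominating integral is controlled by products of $\|F(\cdot,u+tw)\|_{L^{q^\pm}}$ and $\|f(\cdot,u+\tau w)w\|_{L^{q^\pm}}$, which stay uniformly bounded for small $t$ by Hölder and the embeddings of Theorem \ref{cpt}.

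The main obstacle is the continuity $u_n\to u$ in $E$ implies $J'(u_n)\to J'(u)$ in $E^*$. For the Kirchhoff factor, I combine continuity of $m$ with continuity of $\sigma$ (so that $m(\sigma(u_n))\to m(\sigma(u))$) and standard convexity arguments à la Br\'ezis--Lieb together with Proposition \ref{norm-convergence} to pass from modular convergence to operator-norm convergence of $\sigma'(u_n)$. For the Choquard factor, I invoke the compact embedding of Theorem \ref{cpt} to extract a subsequence with $u_n\to u$ a.e.\ and dominated by an $L^{r(x)q^\pm}$ function, then use continuity of $F,f$ and the HLS inequality to pass to the limit inside the double integral; this is where careful bookkeeping of the variable exponents $q^-,q^+,r^-,r^+$ is required, since every Hölder estimate splits into a ``small modulus'' and a ``large modulus'' piece and both must be handled simultaneously with the $\mu(x,y)$-kernel.
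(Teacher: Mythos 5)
Your proposal is essentially correct and uses the same high-level decomposition as the paper, namely $J = M\circ\sigma - \Psi$, establishing Gateaux differentiability of each piece and then the continuity of the Gateaux derivative. The difference lies in the technical route for the continuity step. The paper observes that if $u_n\to u$ in $E$ then the Nemytskii images $\mathcal U_n(x,y) = |u_n(x)-u_n(y)|^{p(x,y)-2}(u_n(x)-u_n(y))/|x-y|^{(N+s(x,y)p(x,y))/p'(x,y)}$ form a bounded sequence in $L^{p'(\cdot,\cdot)}(\R)$ converging a.e., and invokes Willem's Proposition 5.4.7 (bounded plus a.e.\ convergence implies weak convergence in $L^{p'}$) to pass to the limit in the pairing $\langle J'(u_n),w\rangle$; for the Choquard piece it defers to \cite[Section 3]{alves-choquard}. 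You instead propose to control the sequence by dominated convergence after reducing norm-convergence to modular convergence via Proposition \ref{norm-convergence}. Your route, if carried through carefully, actually yields strong $L^{p'}$-convergence of $\mathcal U_n$ and hence genuine operator-norm convergence of $\sigma'(u_n)$, which is what $C^1$ requires; so your approach is, if anything, the more robust one, and your explicit MVT-plus-domination argument for the Gateaux differentiability fills in a step the paper dismisses with ``it is easy to see.''

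One caveat: the phrase ``standard convexity arguments \`a la Br\'ezis--Lieb'' is the wrong label for what you actually need. The Br\'ezis--Lieb lemma concerns the splitting $\|u_n\|_p^p - \|u_n-u\|_p^p \to \|u\|_p^p$ for weakly and a.e.\ convergent sequences, and does not enter here. The correct tool is either Krasnoselskii-type continuity of Nemytskii operators in (variable-exponent) Lebesgue spaces, or the concrete argument you are implicitly gesturing at: from $\rho_E(u_n-u)\to 0$ extract a subsequence whose fractional difference quotients are dominated in modulus by a fixed $L^{p(\cdot,\cdot)}$ function, apply dominated convergence to obtain $\mathcal U_{n_k}\to\mathcal U$ strongly in $L^{p'(\cdot,\cdot)}(\R)$, and conclude for the whole sequence by the subsequence principle. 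With that correction the argument closes. Also note that for the Choquard term the compactness of the embedding in Theorem \ref{cpt} is not the operative point, since you already have strong $E$-convergence; the continuous embedding together with $(f1)$, a.e.\ convergence, and the HLS inequality suffice.
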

\begin{proof}
	Clearly, $J$ is well defined. Also it is easy to see that $M(\sigma(\cdot))$ is Gateaux-differentiable in $E$ and the derivative function at $u\in E$ is given as {\begin{align*}
	&m(\sigma(u))\Big[\int_{\RR^N}\int_{\RR^N}\frac{|
			u(x)-u(y)|^{p(x,y)-2}(u(x)-u(y))(w(x)-w(y))}{|
			x-y|^{N+s(x,y)p(x,y)}}dxdy+\int_{\Om}V|u|^{\p-2}uwdx\Big]
		\end{align*}}
	for all $w\in E.$ Let $\{u_n\}$ be a sequence in $E$ such that $u_n\to u$ strongly in $E$ as $\gt.$ Thus  $u_n(x)\to u(x) $ a.e. in $\RR^N.$ Let $p'$ and $\ol p'$ denote the conjugate of $p$ and $\ol p,$ respectively. Then the sequences {$\Big\{\frac{|
		u_n(x)-u_n(y)|^{p(x,y)-2}(u_n(x)-u_n(y))}{|
		x-y|^{(N+s(x,y)p(x,y))/{p'(x,y)}}} \Big\}$ and $\left\{[V(x)]^{1/{\ol{p}'(x)}}|u_n(x)|^{\p-2}u_n(x)\right\}$} are bounded in $L^{p'(\cdot,\cdot)}(\R)$ and in $L^{\ol{p}'(\cdot)}(\Om)$, respectively and as $\gt$ 
	{\begin{align*}\mathcal{U}_n(x,y)&:=\frac{|
			u_n(x)-u_n(y)|^{p(x,y)-2}(u_n(x)-u_n(y))}{|
			x-y|^{(N+s(x,y)p(x,y))/{p'(x,y)}}}\n&~~~~\to\mathcal{U}(x,y):=\frac{|
			u(x)-u(y)|^{p(x,y)-2}(u(x)-u(y))}{|
		x-y|^{(N+s(x,y)p(x,y))/{p'(x,y)}}} \text{~ for a.e. } x,y\in\RR^N,\end{align*} }
and	{$$V(x)^{1/{\ol{p}'(x)}}|u_n(x)|^{\p-2}u_n(x) \to V(x)^{1/{\ol{p}'(x)}}|u(x)|^{\p-2}u(x) \text{ for a.e. }  x\in\Om.$$}
	Thus by { \cite[Proposition 5.4.7]{willem1}},
	we get as $\gt$ {$$\mathcal{U}_n\rightharpoonup\mathcal{U}\text{~~weakly in $L^{p'(\cdot,\cdot)}(\R)$}$$ } and
	{$$V(\cdot)^{1/{\ol{p}'(\cdot)}}|u_n(\cdot)|^{\ol p(\cdot)-2}u_n(\cdot) \rightharpoonup V(\cdot)^{1/{\ol{p}'(\cdot)}}|u(\cdot)|^{\ol p(\cdot)-2}u(\cdot)\text{ ~~weakly in $L^{\ol{p}'(\cdot)}(\Om)$}.$$}  Hence for any $w\in E,$ by Theorem \ref{cpt} and  definition of weak convergence, 
	{\begin{align}\label{01}
		&\DD\lim_{n\ra \infty}\int_{\RR^N}\int_{\RR^N}\frac{|
			u_n(x)-u_n(y)|^{p(x,y)-2}(u_n(x)-u_n(y))(w(x)-w(y))}{|
			x-y|^{N+s(x,y)p(x,y)}}dxdy\n&~~~~~=\int_{\RR^N}\int_{\RR^N}\frac{|
			u(x)-u(y)|^{p(x,y)-2}(u(x)-u(y))(w(x)-w(y))}{|
			x-y|^{N+s(x,y)p(x,y)}}dxdy,\n
&	\DD\lim_{n\ra \infty}\int_{\Om}V(x)|u_n(x)|^{\p-2}u_n(x)w(x)dx=\int_{\Om}V(x)|u(x)|^{\p-2}u(x)w(x)dx.
	\end{align}}
	Next, using Proposition \ref{HLS} and arguing similarly as in \cite[Section 3]{alves-choquard}, one can see  that $\Psi$ is of class $C^1$ such that Gateaux-derivative of $\Psi$ is given
	as {$$\Psi'(u)w=\int_{\Om}\int_{\Om}\frac{F(y,u)}{|x-y|^{\mu(x,y)}}f(x,u)w(x)dxdy$$} for all $w\in E.$  Moreover
	{\begin{align}\label{03}
		\int_{\Om}\int_{\Om}\frac{F(y,u_n)}{|x-y|^{\mu(x,y)}}f(x,u_n)w(x)dxdy\to \int_{\Om}\int_{\Om}\frac{F(y,u)}{|x-y|^{\mu(x,y)}}f(x,u)w(x)dxdy \text{~~as~}\gt.
		\end{align}}
	Finally combining \eqref{01}-\eqref{03}, we obtain as $\gt$ 
	{$$\|J'(u_n)-J'(u)\|_{E^*}=\DD\sup_{w\in E,\|w\|_E=1} |\langle J'(u_n)-J'(u). w\rangle|\to 0.$$} This completes the proof. 
\end{proof}
\section{Proof of the main results}
\noi Here we give the proofs of  the main theorems in this article.
{From now on wards  $C$ is treated as a generic positive constant which may vary from line to line.} The notation $o_n(1)$ implies $\lim_{n\to+\infty} o_n(1)=0.$
\subsection{Proof of Theorem \ref{mainthm}} 

\noi To prove the Theorem \ref{mainthm} we need some Lemmas and results presented below.
\begin{lemma}[Mountain Pass Geometry 1]\label{mp1}
	Assume that $(S1), (P1),(\mu1),$ $(V1),$  $(M1)$ and $(f1)-(f4)$ hold. Then there exist $R>0$ and $\delta>0$ such that $J(u)>R$ for all $u\in E$ with $\|u\|_E=\delta.$
\end{lemma}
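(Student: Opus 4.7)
The plan is to combine the modular--to--norm estimates in $E$ with a variable-exponent Hardy--Littlewood--Sobolev bound on $\Psi(u)$, and then exploit the gap between the exponent $\theta p^+$ coming from the Kirchhoff contribution and the strictly larger exponent $2r^-$ coming from the Choquard nonlinearity.

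First I would establish a lower bound for $M(\sigma(u))$. Since $1/p^+\le 1/p(x,y)\le 1/p^-$, we have $\sigma(u)\ge \rho_E(u)/p^+$. By Proposition \ref{norm-modular}(iv), if $\|u\|_E\le 1$ then $\rho_E(u)\ge \|u\|_E^{p^+}$, and by choosing $\|u\|_E$ small enough we may guarantee $\sigma(u)\le 1$. Hence Remark \ref{main} (for $a=0$) or Remark \ref{rem-M} (for $a>0$) yields
\[
M(\sigma(u))\;\ge\;M(1)\,\sigma(u)^{\theta}\;\ge\;M(1)\,(p^+)^{-\theta}\,\|u\|_E^{\theta p^+}.
\]

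Second, I would bound $\Psi(u)$ from above. Combining $(f1)$ and $(f2)$, for every $\varepsilon>0$ there exists $C_\varepsilon>0$ such that
\[
|F(x,t)|\;\le\;\varepsilon\,|t|^{\theta p^+/2}+C_\varepsilon\,|t|^{r(x)} \quad\text{for all }(x,t)\in\Omega\times\mathbb R.
\]
Proposition \ref{HLS} applied with $h=g=F(\cdot,u)$ gives $\Psi(u)\le C\bigl(\|F(\cdot,u)\|_{L^{q^+}(\Omega)}^2+\|F(\cdot,u)\|_{L^{q^-}(\Omega)}^2\bigr)$. For each of the two norms I would use the triangle inequality followed by Lemma \ref{lemA1} (with $\vartheta_1\equiv \theta p^+/2$ in one term and $\vartheta_1=r(\cdot)$ in the other), and then the compact embedding $E\hookrightarrow L^{\gamma(x)}(\Omega)$ of Theorem \ref{cpt}; the embedding applies because $(f1)$ supplies $\theta p^+q^\pm/2<r^-q^\pm\le r(x)q^\pm<p_s^*(x)$. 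Squaring and expanding produces
\[
\Psi(u)\;\le\;C_1\,\varepsilon^{2}\,\|u\|_E^{\theta p^+}+C_\varepsilon\bigl(\|u\|_E^{2r^-}+\|u\|_E^{2r^+}+\|u\|_E^{\theta p^+/2+r^-}+\|u\|_E^{\theta p^+/2+r^+}\bigr),
\]
where the cross-terms are handled by Young's inequality. Crucially, the assumption $r^->\theta p^+/2$ in $(f1)$ forces every exponent on the right-hand side except $\theta p^+$ to be strictly larger than $\theta p^+$.

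Combining the two estimates, for $\|u\|_E\le 1$ we obtain
\[
J(u)\;\ge\;\bigl(M(1)(p^+)^{-\theta}-C_1\varepsilon^2\bigr)\|u\|_E^{\theta p^+}-C_\varepsilon\sum_{\alpha>\theta p^+}\|u\|_E^{\alpha}.
\]
Fixing $\varepsilon$ so that the bracket is positive and then choosing $\delta>0$ small enough that the higher-order remainder is dominated by half the main term gives $J(u)\ge R>0$ on $\{\|u\|_E=\delta\}$.

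The main obstacle is the second step: one must manage simultaneously the variable exponents $q^\pm$, $p(\cdot)$, $\ol p(\cdot)$ and $r(\cdot)$, coordinate the outputs of Proposition \ref{HLS}, Lemma \ref{lemA1} and Theorem \ref{cpt}, and verify that every resulting power of $\|u\|_E$ either matches $\theta p^+$ (so that it can be absorbed by smallness of $\varepsilon$) or strictly exceeds it (so that it can be absorbed by smallness of $\delta$). The hypothesis $r^->\theta p^+/2$ is precisely the separation-of-scales that makes this possible.
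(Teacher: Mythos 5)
Your proposal follows the same route as the paper's proof: bound $\Psi(u)$ via the Hardy--Littlewood--Sobolev inequality (Proposition~\ref{HLS}) together with the $\varepsilon$-splitting $|F(x,t)|\le\varepsilon|t|^{\theta p^+/2}+C_\varepsilon|t|^{r(x)}$ from $(f1)$--$(f2)$, pass to $E$-norms through Lemma~\ref{lemA1} and Theorem~\ref{cpt}, compare against $M(\sigma(u))\ge M(1)(p^+)^{-\theta}\|u\|_E^{\theta p^+}$, and exploit $r^->\theta p^+/2$ to make the remainder subdominant for small $\delta$. The only cosmetic difference is that the paper absorbs the squared bound directly via $(a+b+c)^2\le 3(a^2+b^2+c^2)$ rather than tracking the cross-terms (which in any case have exponent $\theta p^+/2+r^\pm>\theta p^+$ and need no Young-type treatment), so there is no genuine divergence in the argument.
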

\begin{proof}
	First  we estimate $\Psi(u).$ For that, using $(f1)$ and Theorem \ref{cpt}, one can easily check that $F(\cdot,u)\in L^{q^-}(\Om)\cap L^{q^+}(\Om).$ 
	Hence by {Proposition \ref{HLS}}, we get
	{\begin{align}\label{2}
	\Psi(u)=\frac{1}{2}\F\leq C(N,q,\mu)\Big[\|F(\cdot,u)\|_{L^{q^+}(\Om)}^2+\|F(\cdot,u)\|_{L^{q^-}(\Om)}^2\Big].
	\end{align}}
	From $(f1)$ and $(f2)$ we deduce that for any $\epsilon>0,$ there exist some constants $\ol{C}{(\epsilon)},~C(\epsilon)$ $>0$ such that 
	$|f(x,t)t|\leq \epsilon\gr |t|^{\frac{\theta p^+}{2}-1}+\ol{C}(\epsilon) |t|^{r(x)-1}$ for a.e. $x \in\Om$ and for all $t\br$ 
and
	{\begin{align}\label{3}
	|F(x,t)|\leq \epsilon |t|^{\frac{\theta p^+}{2}}+C(\epsilon) |t|^{r(x)} \text{~~ for a.e. $x \in\Om$ and for all $t\br$}.
	\end{align} }
	Using \eqref{3} and { Lemma \ref{lemA1} }, we have
	{\begin{align}\label{4}
		\|F(\cdot,u)\|_{L^{q^+}(\Om)}&\leq\Big[\DD\int_{\Om}\Big(\epsilon |u(x)|^{\frac{\theta p^+}{2}}+C(\epsilon) |u(x)|^{r(x)}\Big)^{q^+}dx\Big]^{1/{q^+}}\n
		&\leq 2\Big[\epsilon\Big(\int_{\Om} |u(x)|^{\frac{\theta p^+}{2}q^+}dx\Big)^{1/{q^+}}+C(\epsilon) \|~|u|^{r(\cdot)}~\|_{L^{q^+}(\Om)}\Big]\n
		&\leq 2\Big[\epsilon\|u\|_{L^{\gr q^+}(\Om)}^{\frac{\theta p^+}{2}}
		+C(\epsilon) \Big\{\|u\|_{L^{r(\cdot)q^+}(\Om)}^{r^+}+\|u\|_{L^{r(\cdot)q^+}(\Om)}^{r^-}\Big\}\Big].
		\end{align}}
	Similarly, we deduce
	{\begin{align}\label{5}
	\|F(\cdot,u)\|_{L^{q^-}(\Om)}\leq 2\Big[\epsilon\|u\|_{L^{\gr q^-}(\Om)}^{\gr}
	+C(\epsilon) \Big\{\|u\|_{L^{r(\cdot)q^-}(\Om)}^{r^+}+\|u\|_{L^{r(\cdot)q^-}(\Om)}^{r^-}\Big\}\Big]
	\end{align}}
	Plugging \eqref{4} and \eqref{5} into \eqref{2}, we derive
{	\begin{align}\label{5.1.1}
	\Psi(u)&\leq C_h\Big[\Big\{\epsilon^2\|u\|_{L^{\gr q^+}(\Om)}^{\theta p^+}
	+C(\epsilon)^2 \Big(\|u\|_{L^{r(\cdot)q^+}(\Om)}^{2r^+}+~\|u\|_{L^{r(\cdot)q^+}(\Om)}^{2r^-}\Big)\Big\}\n
	&~~~~~~~~~~~~~+\Big\{\epsilon^2\|u\|_{L^{\gr q^-}(\Om)}^{\theta p^+}
	+C(\epsilon)^2 \Big(\|u\|_{L^{r(\cdot)q^-}(\Om)}^{2r^+}+\|u\|_{L^{r(\cdot)q^-}(\Om)}^{2r^-}\Big)\Big\}\Big],
	\end{align}}
where $C_h>0$ is a constant. Now  by applying Theorem \ref{cpt} in \eqref{5.1.1}, we have
{	\begin{align}\label{5.1}
	\Psi(u)\leq C\left[\epsilon^2\|u\|_{E}^{\theta p^+}
	+C(\epsilon)^2 \left\{\|u\|_{E}^{2r^+}+~\|u\|_{E}^{2r^-}\right\}\right].
	\end{align}}
Let $u\in E,~\|u\|_E<1.$	Therefore using {\eqref{5.1}, Remark \ref{main}{(or Remark \ref{rem-M} ) } and Proposition \ref{norm-modular}}, we get
	{\begin{align}\label{1}
	J(u)
	&\geq M(1)\{\sigma(u)\}^\theta-C\left[\epsilon^2\|u\|_{E}^{\theta p^+}
	+C(\epsilon)^2 \left\{\|u\|_{E}^{2r^+}+~\|u\|_{E}^{2r^-}\right\}\right]\n
	&\geq \frac{M(1)}{(p^+)^{\theta}}\{\rho_E(u)\}^\theta-C\epsilon^2\|u\|_{E}^{\theta p^+}
	-2CC(\epsilon)^2 \|u\|_{E}^{2r^-}\n
	&\geq\frac {M(1)}{(p^{+})^{\theta}}\|u\|_E^{\theta p^+}-C\epsilon^2\|u\|_{E}^{\theta p^+}
	-2CC(\epsilon)^2 \|u\|_{E}^{2r^-}
	\end{align}}
	By taking $0<\DD\epsilon<\left[ {M(1)}/{(2C(p^{+}){\theta})}\right]^{1/2}$ in \eqref{1},
	since $\|u\|_E<1$ and $\gr<r^-$ we can choose $0<\delta<1$ sufficiently small such that   \eqref{1} infers that there exists some $R>0$ such that $J(u)>R>0$ for $\|u\|_E=\delta.$ 
\end{proof} 

\begin{lemma}[Mountain Pass Geometry 2]\label{mp2}
	Assume that $(S1),(P1),(\mu1),$ $(V1),$  $(M1)$ and $(f1)-(f4)$ hold. Then there exists $e\in X_{0}$ with $\| e \|_{X_0}>\delta$ such that $J(e)<0,$ where $\delta$ is given by Lemma \ref{mp1}.
\end{lemma}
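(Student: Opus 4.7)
The plan is to exhibit a ray in $E$ along which $J$ tends to $-\infty$. I would fix a nontrivial, nonnegative $u_0\in C_c^\infty(\Om)$; this lies in $E\subset X_0$ and, by continuity, there exist $\epsilon_0>0$ and a measurable set $A\subset\Om$ of positive Lebesgue measure with $u_0\geq\epsilon_0$ on $A$. The goal would be to prove $J(tu_0)\to-\infty$ as $t\to+\infty$ and then set $e:=t^*u_0$ for $t^*$ large enough that $\|e\|_{X_0}>\delta$; since the Luxemburg-type norm on $X_0$ is positively homogeneous, the condition $\|t^*u_0\|_{X_0}>\delta$ is automatic as soon as $t^*$ is sufficiently large.

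For the Kirchhoff term, whenever $t\geq 1$ one has $t^{p(x,y)}\leq t^{p^+}$ and $t^{\p}\leq t^{p^+}$, so $\sigma(tu_0)\leq t^{p^+}\sigma(u_0)$. Remark \ref{main} in the degenerate case and Remark \ref{rem-M} in the non-degenerate case then give
$$M(\sigma(tu_0))\leq M(1)\bigl(1+t^{\theta p^+}\sigma(u_0)^\theta\bigr).$$
For the Choquard term, hypothesis $(f3)$ supplies, for every $K>0$, a threshold $T_K$ such that $F(x,s)\geq K|s|^{\theta p^+/2}$ whenever $|s|\geq T_K$, uniformly in $x\in\Om$. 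Taking $t\geq T_K/\epsilon_0$ yields $F(x,tu_0(x))\geq K\epsilon_0^{\theta p^+/2}t^{\theta p^+/2}$ pointwise on $A$. Since $\mu^+<N$ and $A$ is bounded, the constant $C_A:=\int_A\int_A|x-y|^{-\mu(x,y)}\,dx\,dy$ is finite and strictly positive; using $F\geq 0$ on $\Om$ (Remark \ref{f5}) to discard the complementary integrals,
$$\Psi(tu_0)\geq\frac{K^2 \epsilon_0^{\theta p^+} C_A}{2}\,t^{\theta p^+}.$$

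Combining the two estimates,
$$J(tu_0)\leq M(1)+\Bigl(M(1)\sigma(u_0)^\theta-\tfrac{1}{2}K^2\epsilon_0^{\theta p^+}C_A\Bigr)t^{\theta p^+}.$$
Since $K$ may be chosen arbitrarily large (at the cost only of enlarging the lower threshold $T_K/\epsilon_0$ on $t$), I would fix $K$ so that the bracketed coefficient is strictly negative. Then $J(tu_0)\to-\infty$ as $t\to+\infty$, and any sufficiently large $t^*$ delivers the required $e=t^*u_0$.

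The main obstacle will be the exact matching of the two growth rates: both $M(\sigma(tu_0))$ and any naive lower bound on $\Psi(tu_0)$ come with the same exponent $t^{\theta p^+}$, so mere superlinearity of $F$ beyond $|t|^{\theta p^+/2}$ is not enough — one truly needs the strict divergence $F(x,t)/|t|^{\theta p^+/2}\to+\infty$ encoded in $(f3)$ in order to inflate the prefactor on the Choquard side past the fixed coefficient $M(1)\sigma(u_0)^\theta$ produced by the Kirchhoff side. This is precisely the gap that the above choice of $K$ exploits.
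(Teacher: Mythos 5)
Your argument is correct and follows essentially the same route as the paper: scale a fixed test function by $t\to\infty$, bound $M(\sigma(tu_0))$ above by $O(t^{\theta p^+})$, bound $\Psi(tu_0)$ below by $K^2\cdot O(t^{\theta p^+})$ via $(f3)$, and pick $K$ large to make the leading coefficient negative. The one real difference is that you localize the application of $(f3)$ to a positive-measure set $A$ where $u_0\geq\epsilon_0>0$, whereas the paper asserts $|t_*u(x)|>C_l$ for a.e.\ $x\in\Om$ for a general $u\in E$ with $u>0$ (which cannot be guaranteed since such $u$ need not be bounded away from zero); your version actually repairs that slight imprecision.
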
 
\begin{proof}
	Choose $u\in E, u>0$ such that $\|u\|_E=1$ and $\int_{\Om}\int_{\Om}\frac {|u(x)|^{\gr}|u(y)|^{\gr}}{|x-y|^{\mu(x,y)}}dxdy>0.$ Now for $t>1$ large, using  Remark \ref{main} {(or Remark \ref{rem-M} ) } and {Proposition \ref{norm-modular}}, we get
	{\begin{align}\label{6}
	J(tu)
	&\leq \frac{M(1)}{(p^-)^\theta}\left(\rho_E(tu)\right)^\theta-\Psi(tu)\n
	&\leq \frac{M(1)}{(p^-)^\theta}t^{\theta p^+}\left(\rho_E(u)\right)^\theta-\Psi(tu)=\frac{M(1)}{(p^-)^\theta}t^{\theta p^+}-\Psi(tu).
	\end{align}}
	It follows from $(f3)$ that for any $l>0$ there exists $C_l>0$ such that$$F(x,tu(x))>l |tu(x)|^{\gr},$$ whenever $|tu(x)|>C_l$ for a.e. $x\in \Om.$	
	Therefore, using the above inequality in \eqref{6}, we  deduce that
{	\begin{align}\label{7}
	J(tu)\leq \frac{M(1)}{(p^-)^\theta}t^{\theta p^+}-\frac{l^2}{2}t^{\theta p^+}\Big(\int_{\Om}\int_{\Om}\frac {|u(x)|^{\gr}|u(y)|^{\gr}}{|x-y|^{\mu(x,y)}}dxdy\Big).
	\end{align}} After taking $ 0<l<\big[4~\frac{M(1)}{(p^-)^\theta}\big(\int_{\Om}\int_{\Om}\frac {|u(x)|^{\gr}|u(y)|^{\gr}}{|x-y|^{\mu(x,y)}}dxdy\big)^{-1}\big]^{1/2}$ in \eqref{7} we can choose $t_*>0$ large enough so that $|t_*u(x)|>C_l$ for a.e. $x\in \Om$ with $\|t_*u\|_E>\delta $ such that $J(t_*u)<0.$  Thus  by fixing $e=t_*u,$ the result follows.
\end{proof}
\begin{definition}[Cerami condition]\label{cerami}
	$J$ is said to be satisfying Cerami condition $(C)_c$ for any $c\in\RR,$ if for any sequence $\{u_n\}$ in $E$ \begin{align*}
	(C)_c~~~~~~~~~~~~~~~~~~~~~~~~J(u_n)\to c \text{~~~~~ and ~~~~~} (1+\|u_n\|_E)\|J'(u_n)\|_{E^*}\to 0, \text { as } n\to\infty
	\end{align*}
	then $u_n\to u$ strongly in $E$ as $\gt.$
\end{definition}
\begin{lemma}\label{bounded}
	Let $(S1),(P1),(\mu1),$ $(V1),$  $(M1)$ and $(f1)-(f4)$ hold.  Then  the functional $J$ satisfies the Cerami condition $(C)_c$ for any $c\in\RR.$
\end{lemma}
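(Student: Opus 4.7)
The plan is to prove that any Cerami sequence $\{u_n\}$ is bounded in $E$, extract a weakly convergent subsequence, and then upgrade to strong convergence via an $(S_+)$-type property of the principal operator together with compactness of the Choquard nonlinearity. The central algebraic identity used throughout is
\begin{equation*}
\theta p^+ J(u) - \langle J'(u), u\rangle = \bigl[\theta p^+ M(\sigma(u)) - m(\sigma(u))\rho_E(u)\bigr] + \tfrac{1}{2}\!\int_{\Omega}\!\!\int_{\Omega}\! \frac{F(y,u(y))\,\mathcal{F}(x,u(x))}{|x-y|^{\mu(x,y)}}\,dxdy,
\end{equation*}
where the Choquard bracket follows from the direct computation $\langle \Psi'(u), u\rangle - \theta p^+ \Psi(u) = \frac{1}{2}\int\!\int F(y,u)\mathcal{F}(x,u)/|x-y|^{\mu(x,y)}\,dxdy$, and both brackets are nonnegative: the first by Remarks 1.1--1.2 combined with the elementary inequality $p^+\sigma(u) \geq \rho_E(u)$, and the second by Remarks 1.4--1.6. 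The Cerami hypothesis forces the left-hand side to equal $\theta p^+ c + o(1)$, so each bracket evaluated at $u_n$ is uniformly bounded.

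For boundedness I would argue by contradiction. Suppose $\|u_n\|_E \to \infty$ and set $v_n = u_n/\|u_n\|_E$; up to a subsequence $v_n \rightharpoonup v$ in $E$ and $v_n \to v$ strongly in $L^{\gamma(\cdot)}(\Omega)$ for admissible $\gamma$ by Theorem 2.10. If $v \not\equiv 0$, on the positive-measure set $\{v \neq 0\}$ we have $|u_n| \to \infty$ a.e., so $(f3)$ and Fatou's lemma give $\Psi(u_n)/\|u_n\|_E^{\theta p^+} \to +\infty$; together with $M(\sigma(u_n)) \leq C(1+\|u_n\|_E^{\theta p^+})$ from $(M1)$, this contradicts $J(u_n) \to c$. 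If $v \equiv 0$, apply the Jeanjean maximiser $t_n \in [0,1]$ with $J(t_n u_n) = \max_{t \in [0,1]} J(tu_n)$. For each fixed $k$, a suitable $s_k$ makes $M(\sigma(s_k v_n)) \gtrsim s_k^{\theta p^-} \geq 2k$, while the strong $L^{\gamma(\cdot)}$-convergence $v_n \to 0$ combined with the Choquard estimates already used in Lemma 3.1 forces $\Psi(s_k v_n) \to 0$. Hence $J(t_n u_n) \geq J(s_k v_n) \geq k$ for $n$ large, so $t_n \in (0,1)$ (as $J(0)=0$ and $J(u_n) \to c$), and consequently $\langle J'(t_n u_n), t_n u_n\rangle = 0$. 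Re-using the central identity at $w = t_n u_n$, the Choquard bracket at $t_n u_n$ is dominated by $\vartheta$ times the Choquard bracket at $u_n$ via $(f4)$ and the monotonicity $F(y, t_n u_n) \leq F(y,u_n)$ from Remark 1.6, and the Kirchhoff bracket at $t_n u_n$ is dominated by the Kirchhoff bracket at $u_n$ via the scaling monotonicities $\sigma(tu) \leq \sigma(u)$ and $p^+\sigma(tu) - \rho_E(tu) \leq p^+\sigma(u) - \rho_E(u)$ for $t \in [0,1]$ together with the explicit polynomial form $m(t) = a + bt^{\theta-1}$. This bounds $J(t_n u_n)$, contradicting $J(t_n u_n) \to +\infty$.

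With $\{u_n\}$ bounded, extract $u_n \rightharpoonup u$ in $E$ and $u_n \to u$ strongly in $L^{\gamma(\cdot)}(\Omega)$. The HLS inequality (Proposition 2.11) together with growth $(f1)$ and the compact embedding yields $\langle \Psi'(u_n), u_n - u\rangle \to 0$ by a routine continuity argument. In the degenerate case either $\sigma(u_n) \to 0$ along a subsequence (in which case $u_n \to 0$ strongly in $E$ and we are done) or $\sigma(u_n)$ stays bounded below and Remark 1.1 gives $m(\sigma(u_n)) \geq m_0 > 0$; in the non-degenerate case $m(\sigma(u_n)) \geq a > 0$ automatically. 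Combined with $\langle J'(u_n), u_n - u\rangle \to 0$ from Cerami, this produces $\langle \Phi'(u_n) - \Phi'(u), u_n - u\rangle \to 0$, where $\Phi$ denotes the principal part of the energy (the modular of $(-\Delta)^{s(\cdot)}_{p(\cdot)}$ plus the $V$-weighted piece); the standard $(S_+)$ property for the variable-order variable-exponent fractional $p(\cdot)$-Laplacian then promotes weak to strong convergence in $E$.

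The hardest step will be the Kirchhoff control at $t_n u_n$ inside Jeanjean's argument: the variable exponents $p(\cdot,\cdot)$ and $\bar p(\cdot)$ obstruct the clean cancellations available in the constant-exponent setting, and the non-homogeneous structure of $M(\sigma(\cdot))$ leaves a residual $m(\sigma)(p^+\sigma - \rho_E)$ that must be tamed using only the scaling monotonicity in $t \in [0,1]$ combined with the explicit polynomial form of $m$.
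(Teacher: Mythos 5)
Your proposal follows essentially the same route as the paper's proof: the argument by contradiction with the normalized sequence, the dichotomy on whether the weak limit vanishes, the Jeanjean maximizer $t_n\in[0,1]$ with $\langle J'(t_nu_n),t_nu_n\rangle=0$, the central identity $\theta p^+J-\langle J',\cdot\rangle$ split into a Kirchhoff bracket (controlled via the scaling monotonicities $\sigma(tu)\le\sigma(u)$, $p^+\sigma(tu)-\rho_E(tu)\le p^+\sigma(u)-\rho_E(u)$ together with the explicit polynomial form of $m$) and a Choquard bracket (controlled via $(f4)$ and the monotonicity of $F$ in Remark~\ref{f6}), and finally the passage from weak to strong convergence using the HLS inequality to kill $\langle\Psi'(u_n),u_n-u\rangle$ and an $(S_+)$-type argument for the principal part. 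The only cosmetic difference is that you invoke the $(S_+)$ property of the operator abstractly, while the paper proves it inline via Simon's inequalities (splitting $p<2$ and $p\ge 2$ for both the fractional seminorm and the $V$-weighted term); and your degenerate-case dichotomy on $\sigma(u_n)$ is the same as the paper's dichotomy on $\inf_n\|u_n\|_E$.
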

\begin{proof}
	Let $\{u_n\}\subset E$ be a Cerami sequence for $J$ at level $c\in\RR.$ Then by Definition \ref{cerami}, 
	\begin{align}\label{c1}
	J(u_n)\to c \text{~~~~~ and ~~~~~} (1+\|u_n\|_E)\|J'(u_n)\|_{E^*}\to 0 \text { as } n\to+\infty,
	\end{align}
which implies that
	\begin{align}\label{c2}
	\langle J'(u_n),u_n\rangle\to 0 \text{~~ as } n\to +\infty,
	\end{align}
	where $\langle\cdot,\cdot\rangle$ denotes the duality pairing between $E$ and its dual $E^*.$\\
	{First we discuss  the degenerate case, i.e., $a=0.$} Hence we divide the proof into two parts.\\
	{{\bf{Case}:$\DD\inf_{n\in\mathbb N} \|u_n\|_E=d_*>0.$}} First we prove that the sequence $\{u_n\}$ is bounded in $E.$ Indeed, arguing
	by contradiction, we assume that $\{u_n\}$ is unbounded in $E,$ that is, 
	\begin{align}\label{c3}
	\|u_n\|_E\to+\infty \text{~~ as ~} n\to+\infty.
	\end{align} 
	Without loss of generality, we assume $\|u_n\|_E>1$ and  
	$w_n\rightharpoonup w$ weakly in   $E$ and $w_n(x)\to w(x)$ a.e. $x \in \RR^N$ 
	and thus by applying { Theorem \ref {cpt},} it follows that
	\begin{align}\label{c5}
	w_n\to  w \text{  strongly in  } L^{\gamma(\cdot)}(\Om) \text{~ for any~ } 1< \gamma(x)<p_s^*(x) \text{ as } n\to +\infty.
	\end{align}
	Let $\Om_0:=\{x\in\Om: w(x)\not=0\}.$ Thus we have 
	\begin{align}\label{c6}
	|u_n(x)|\to+\infty \text{ ~a.e. }  x \in\Om_0 \text{~~as } n\to+\infty.
	\end{align}
	When $x\in\Om_0,$  we have $|w_n(x)|>0$ for large $n.$ Therefore using this fact together with $(f3)$ and \eqref{c6},  for each $x\in\Om_0$ and sufficiently large $n,$ we get 
	\begin{align}\label{c7}
	\DD\lim_{|u_n(x)|\to\infty}\frac{F(x,u_n(x))}{|u_n(x)|^{\gr}}|w_n(x)|^{\gr}=+\infty.
	\end{align}
	{Now using Remark \ref{f5}, \eqref{c7} and Fatou's lemma, we derive
		{\begin{align}\label{c8}
		\DD\liminf_{n\to\infty}\int_{\Om}\frac{F(y,u_n(y))|w_n(y)|^\gr}{|x-y|^{\mu(x,y)}|u_n(y)|^\gr}dy
		&\geq\DD\liminf_{n\to\infty}\int_{\Om_0}
		\frac{F(y,u_n(y))|w_n(y)|^\gr}{|x-y|^{\mu(x,y)}|u_n(y)|^\gr}dy\n
		&\geq\int_{\Om_0}\DD\liminf_{n\to\infty}
		\frac{F(y,u_n(y))|w_n(y)|^\gr}{|x-y|^{\mu(x,y)}|u_n(y)|^\gr}dy \n&=+\infty.
		\end{align}}
		Combining together \eqref{c7} and \eqref{c8} for each $x\in \Om_0,$ we obtain 
	{\begin{align*}
		\Big(\int_{\Om}\frac{F(y,u_n(y))|w_n(y)|^\gr}{|x-y|^{\mu(x,y)}|u_n(y)|^\gr}dy\Big
		)\frac{F(x,u_n(x))}{|u_n(x)|^{\gr}}|w_n(x)|^{\gr}\to+\infty 
		\text{ as } \gt,
		\end{align*} }
		that is, 
		{ \begin{align}\label{c9}
		\frac{1}{\|u_n\|_E^{\theta p^+}}\Big(\int_{\Om}\frac{F(y,u_n(y))}{|x-y|^{\mu(x,y)}}dy\Big
		){F(x,u_n(x))}\to+\infty
		\text{ as } \gt.
		\end{align} }
		
		\noi We claim that $meas(\Om_0)=0.$ Indeed, if not, then for large $n$ taking $\|u_n\|_E>1$ and using\eqref{c1}, \eqref{c3}, \eqref{c9} and  Remark \ref{main}, Remark\ref{f5} with Fatou's lemma, we get
		{\begin{align*}
		\frac{1}{(p^-)^\theta}&\geq\DD\liminf_{n\to\infty}\frac{1}{(p^-)^\theta} \frac{[\rho_E(u_n)]^\theta}{\|u_n\|_E^{\theta p^+}}\n
		&\geq\DD\liminf_{n\to\infty} \frac{[\sigma(u_n)]^\theta}{\|u_n\|_E^{\theta p^+}}\n
		&=\DD\liminf_{n\to\infty}\frac{1}{\|u_n\|_E^{\theta p^+}} \frac{\left[J(u_n)+\Psi(u_n)\right]}{M(1)}\n
		&\geq \DD\liminf_{n\to\infty}\frac{\Psi(u_n)}{M(1)\|u_n\|_E^{\theta p^+}}-1\n
		&\geq\frac{1}{2M(1)}\DD\liminf_{n\to\infty}\int_{\Om_0}\frac{1}{\|u_n\|_E^{\theta p^+}}{\left(\DD\int_{\Om}\frac{F(y,u_n(y))}{|x-y|^{\mu(x,y)}}dy\right
			){F(x,u_n(x))}}dx -1\n
		&\geq\frac{1}{2M(1)}\int_{\Om_0}\DD\liminf_{n\to\infty}\frac{1}{\|u_n\|_E^{\theta p^+}}{\left(\DD\int_{\Om}\frac{F(y,u_n(y))}{|x-y|^{\mu(x,y)}}dy\right
			){F(x,u_n(x))}}dx -1=+\infty,
		\end{align*}}}
	which is a contradiction and hence $meas (\Om_0)=0.$ Therefore
	\begin{align}\label{c13}
	w(x)=0
	\text{~ a.e. } x\in\Om.\end{align}
	Given any real number $\kappa>1,$ by $(f1),$ it follows that $$F(x,\kappa t)\leq C\left(|\kappa t|+|\kappa t|^{r(x)}\right) \text{ for any } x\in\Om\text{ and for all } t\in\RR,$$
	which together with Theorem \ref{cpt} yields that $$|F(x,\kappa w_n(x)|^{q^+}\leq\overline{h}(x) ,~~~ |F(x,\kappa w_n(x)|^{q^-}\leq\underline{h}(x) \text {~~a.e.~}   x\in\Om\text {  for some ~} \overline{h},~\underline{h}\in L^1(\Om).$$
	Note that from \eqref{c13} we have $w_n\to 0$ strongly in $L^{\gamma(\cdot)}(\Om),$ for all $1<\gamma(x)<p_s^*(x)$ and $w_n(x)\to 0$ a.e. in $\Om,$ hence using the continuity of $F,$ we deduce
	{$$\DD\lim_{n\to+\infty}F(x,\kappa w_n(x))=F(x,0)=0 \text{~ a.e.~} x\in\Om .$$}
	Therefore by Lebesgue dominated convergence theorem, we have 
	{\begin{align}\label{c14}
	\DD\lim_{n\to+\infty}\|F(\cdot,\kappa w_n(\cdot))\|_{L^{q^+}(\Om)}=0 \text{ and }\DD\lim_{n\to+\infty}\|F(\cdot,\kappa w_n(\cdot))\|_{L^{q^-}(\Om)}=0.
	\end{align}}
	Using {{Proposition \ref{HLS}}} and \eqref{c14},  we get as $\gt$
{	\begin{align}\label{c15}
	\Psi(\kappa w_n)\leq C(N,q,\mu)	\Big[\|F(\cdot,\kappa w_n(\cdot))\|_{L^{q^+}(\Om)}^2+\|F(\cdot,\kappa w_n(\cdot))\|_{L^{q^-}(\Om)}^2\Big]\to 0 .
	\end{align}}
	{As $J(tu_n)$ is continuous in $t\in[0,1],$ for each $n\in\mathbb N,$ there exists $t_n\in [0,1]$ such that }
	{\begin{align}\label{c16}
	J(t_nu_n)=\DD\max_{t\in[0,1]}J(tu_n).
	\end{align}}
	We claim that 
	{\begin{align}\label{c17}
	J(t_nu_n)\to+\infty \text{ as } n\to+\infty.
	\end{align} }
	For any real number $C>1,$ choose $\kappa =[{C}/{(\min\big\{1, \frac{m_0}{\theta p^+} \big\})}]^{1/{p^-}}.$ Using \eqref{c3},  we have $\frac{\kappa }{\|u_n\|_E}\in(0,1)$ for $n$  sufficiently large.
	Thus by $(M1),$ Remark \ref{main},\eqref{c15}, \eqref{c16} and Proposition \ref{norm-modular}, we get
{	\begin{align*}
	J(t_nu_n)\geq J\left(\frac{\kappa }{\|u_n\|_E}u_n\right)=J(\kappa w_n)
	&=M(\sigma(\kappa  w_n))-\Psi(\kappa  w_n)\n
	&= \frac{1}{\theta}m(\sigma(\kappa  w_n))\sigma(\kappa  w_n)+o_n(1)\n
	&\geq \frac{m_0}{\theta p^+}  \rho_E(\kappa  w_n)+o_n(1)\n
	&\geq \frac{m_0}{\theta p^+}  (\kappa ) ^{p^-}+o_n(1)\geq C +o_n(1).
	\end{align*}}
\noi Hence \eqref{c17} is proved.
	Because $J(0)=0$ and $J(u_n)\to c$ as $\gt,$ we can see that 
	{\begin{align}\label{c18}
	t_n\in (0,1) \text{ ~and~}\langle J'(t_nu_n), t_nu_n\rangle=t_n ~\frac{d}{dt}\Big|_{t=t_n}J(tu_n)=0.
	\end{align}}
	Combining  Remark \ref{main}, \ref{f5}, \ref{f6}  with  $(f4)$, \eqref{f6}, \eqref{c1}, \eqref{c2}  and \eqref{c18}, we obtain
	{\begin{align*}
	&\frac{1}{\vartheta}	J(t_nu_n)+o_n(1)\n&=\frac{1}{\vartheta}\Big[J(t_nu_n)-\frac{1}{\theta p^+}\langle J'(t_nu_n), t_nu_n\rangle\Big]\n
	&=\frac{1}{\vartheta}\Big[M(\sigma(t_nu_n))-\frac{1}{\theta p^+}m(\sigma(t_nu_n))\rho_E(t_nu_n)\Big]+\frac{1}{2\theta p^+}
	{\int_{\Om}\Big(\int_{\Om}\frac{F(y,t_nu_n)}{|x-y|^{\mu(x,y)}}dy\Big)\frac{\mathcal{F}(x,t_nu_n)}{\vartheta}dx}\n
	&\leq\Big[M(\sigma(t_nu_n))-\frac{1}{\theta p^+}m(\sigma(t_nu_n))\rho_E(t_nu_n)\Big]+\frac{1}{2\theta p^+}
	{\int_{\Om}\int_{\Om}\frac{F(y,u_n(y))}{|x-y|^{\mu(x,y)}}\mathcal{F}(x,u_n(x))dxdy}\n
	&=\Big[M(1)\Big(\int_{\RR^N}\int_{\RR^N}\frac{1}{p(x,y)}\frac{|t_nu_n(x)-t_nu_n(y)|^{p(x,y)}}{|x-y|^{N+sp(x,y)}}dxdy\Big)^{\theta-1}\n &~~~~~~~~~~\times\int_{\RR^N}\int_{\RR^N}\Big(\frac{1}{p(x,y)}-\frac{1}{p^+}\Big)\frac{|t_nu_n(x)-t_nu_n(y)|^{p(x,y)}}{|x-y|^{N+sp(x,y)}}dxdy \Big]-\Psi(u_n)+\frac{1}{\theta p^+} I(u_n)\n
	&\leq\Big[M(1)\Big(\int_{\RR^N}\int_{\RR^N}\frac{1}{p(x,y)}\frac{|u_n(x)-u_n(y)|^{p(x,y)}}{|x-y|^{N+sp(x,y)}}dxdy\Big)^{\theta-1}\n &~~~~~~~~~~\times\int_{\RR^N}\int_{\RR^N}\left(\frac{1}{p(x,y)}-\frac{1}{p^+}\right)\frac{|u_n(x)-u_n(y)|^{p(x,y)}}{|x-y|^{N+sp(x,y)}}dxdy \Big]-\Psi(u_n)+\frac{1}{\theta p^+} I(u_n)\n
	&=J(u_n)-\frac{1}{\theta p^+}\langle J'(u_n), u_n\rangle=c+o_n(1), 
	\end{align*}}
	which contradicts \eqref{c17}. Hence $\{u_n\}$ is bounded in $E$. Therefore 
	passing to the limit $\gt$, if necessary, to a subsequence, thanks to {Theorem \ref{cpt}}, we have $u_n\rightharpoonup u$ weakly in $E, $ $u_n(x)\to u(x)$ a.e. in $\Om$ and  $u_n\to u$ strongly in $L^{\gamma(\cdot)}(\Om)$  for all $\gamma\in C_+(\ol\Om)$ with  $1<\gamma(x)<p_s^*(x).$ Let us denote $v_n:=u_n-u.$ Then clearly
	\begin{align}\label{c19}
	v_n\to 0 \text{ ~~~strongly in~} L^{\gamma(\cdot)}(\Om). 
	\end{align}
	In order to prove $\{u_n\}$  converges strongly to $u$ in $E$ as $\gt,$ we define the following functional. Let $\phi\in E$ be fixed and let $\mathcal{B}_\phi$ denote the linear functional on $E$ defined by
	{\begin{align*}\mathcal{B}_\phi(v)&=\int_{\RR^N}\int_{\RR^N}\frac{|
		\phi(x)-\phi(y)|^{p(x,y)-2}(\phi(x)-\phi(y))(v(x)-v(y))}{|
		x-y|^{N+s(x,y)p(x,y)}}dxdy+\int_{\Om}V|\phi|^{\p-2}\phi vdx\end{align*} }for all $v\in E.$
	For $(x,y)\in\R,$ let us denote {$$\Upsilon(x,y):=\frac{|\phi(x)-\phi(y)|}{|x-y|^{\frac{N}{p(x,y)}+s(x,y)}} \text {~~~and ~~} U(x,y):=\frac{|v(x)-v(y)|}{|x-y|^{\frac{N}{p(x,y)}+s(x,y)}}.$$} Then from H\"{o}lder inequality { and Lemma \ref{lemA1}, } it follows that
	{\begin{align}
		|\mathcal{B}_\phi(v)|&\leq C\Big[\||\Upsilon|^{p(\cdot,\cdot)-1}\big\|_{ {L}^{p'(\cdot,\cdot)}(\R)}
		\big\|U\big\|_{ L^{{p(\cdot,\cdot)}}(\R)}\n &~~~~~~~~~~~+
		\big\| |V^{\frac{1}{\overline{p}(\cdot)}}\phi|^{\overline{p}(\cdot)-1}\big\|_{ L^{\overline{p}'(\cdot)}(\Om)}\big\||V|^{\frac{1}{\overline{p}(\cdot)}} v\big\|_{ L^{\overline{p}(\cdot)}(\Om)}\Big]\n
		&\leq C\left[\left(\|\phi\|_{X_0}^{p^--1}+\|\phi\|_{X_0}^{p^+-1}\right)\|v\|_{X_0}+\left([\phi]_{V}^{p^--1}+[\phi]_{V}^{p^+-1}\right)[v]_{V}\right]\n&\leq C_1\left[\|\phi\|_E^{p^--1}+\|\phi\|_E^{p^+-1}\right]\|v\|_E\nonumber,
		\end{align}} where $C_1>0$ is a constant. 
	Thus for each $\phi\in E$ the linear functional $\mathcal{B}_\phi$ is continuous on $E$. Hence $u_n\rightharpoonup u$  weakly in $E$ implies that \begin{align}\label{c20}
	\DD\lim_{n\to+\infty}\mathcal{B}_{u}(v_n)=0.
	\end{align}
	Since $\{m(\sigma(u_n))-m(\sigma(u))\}$ is bounded a sequence in $\RR$, from \eqref{c20} we get
	\begin{align}\label{c21}
	\DD\lim_{n\to+\infty}	[m(\sigma(u_n))-m(\sigma(u))]\mathcal{B}_{u}(v_n)=0.
	\end{align}
	 Sine $\{u_n\}$ is bounded  in $E,$ using $(f1),$ Theorem \ref{cpt}    for each $n\in\mathbb N,$ we obtain $f(\cdot,u_n(\cdot))v_n(\cdot)\in {L^{q^+}(\Om)}$ and furthermore H\"older's inequality and \eqref{c19} infer
	{\begin{align}\label{c22}
		\|f(\cdot,u_n(\cdot))v_n(\cdot)\|_{ L^{q^+}(\Om)}
		&\leq 2\mathcal{C}\Big[\left(\int_{\Om} |v_n(x)|^{q^+}dx\right)^{\frac{1}{q^+}} +\left(\int_{\Om}|u_n(x)|^{(r(x)-1)q^+}~|v_n(x)|^{q^+}dx\right)^{\frac{1}{q^+}}\Big]\n
		&\leq  2 \mathcal{C}  \Big[\|v_n\|_{ L^{ q^+}(\Om)}+\|~|u_n|^{(r(\cdot)-1)q^+}\|_{ L^{\frac{r(\cdot)}{r(\cdot)-1}}(\Om)}^{\frac{1}{q^+}}\|v_n\|_{ L^{r(\cdot)q^+}(\Om)}\Big]\nonumber\\
		&\leq 2 \mathcal{C}\Big[\|v_n\|_{ L^{ q^+}(\Om)}
		+ \left(\|u_n\|^{(r^+-1)}_{ L^{r(\cdot)q^+}(\Om)}+\|u_n\|^{(r^--1)}_{ L^{r(\cdot)q^+}(\Om)}\right) 
		\|v_n\|_{ L^{r(\cdot)q^+}(\Om)}\Big]\nonumber\\
		&\leq C\Big[ \|v_n\|_{ L^{q^+}(\Om)}+\Big(\|u_n\|^{(r^+-1)}_{E}+\|u_n\|^{(r^--1)}_{E}\Big) 
		\|v_n\|_{ L^{r(\cdot)q^+}(\Om)}\Big]\n&=o_n(1).
		\end{align}}
	Similarly we can deduce that $f(\cdot,u_n(\cdot))v_n(\cdot)\in {L^{q^-}(\Om)}$ and
	\begin{align}\label{c23}
	\|f(\cdot,u_n(\cdot))v_n(\cdot)\|  _{ L^{q^-}(\Om)}=o_n(1).
	\end{align}
	Therefore using { Proposition \ref{HLS}, Theorem \ref{cpt}}, boundedness of $\{u_n\}$ in $E$ and $(f1)$  with  \eqref{c22} and \eqref{c23}, we get
	{\begin{align}\label{c24}
		&\int_{\Om}\int_{\Om}\frac{F(y,u_n(y))f(x,u_n(x))v_n(x)}{|x-y|^{\mu(x,y)}}dxdy\n
		&\leq C(N,q,\mu)\Big[\Big(\int_\Om|F(x,u_n(x))|^{q^+}dx\Big)^{1/{q^+}}\|f(\cdot,u_n(\cdot))v_n(\cdot)\|_{ L^{q^+}(\Om)}\nonumber\\
		&~~~~~~~~~~~~~~~~~+\Big(\int_\Om|F(x,u_n(x))|^{q^-}dx\Big)^{1/{q^-}}\|f(\cdot,u_n(\cdot))v_n(\cdot)\|_{ L^{q^-}(\Om)}\Big]\nonumber\\
		&\leq C\left(\|u_n\|_{ L^{ q^+}(\Om)}+\|~|u_n|^{r(\cdot)}~\|_{ L^{q^+}(\Om)}\right)\|f(\cdot,u_n(\cdot))v_n(\cdot)\|_{ L^{q^+}(\Om)}\nonumber\\
		&~~~~~~~+ C\left(\|u_n\|_{ L^{ q^-}(\Om)}+\|~|u_n|^{r(\cdot)}~\|_{ L^{q^-}(\Om)}\right)\|f(\cdot,u_n(\cdot))v_n(\cdot)\|_{ L^{q^-}(\Om)}\nonumber\\
		&\leq C\left[\|u_n\|_{ L^{ q^+}(\Om)}+\left\{\|u_n\|_{ L^{r(\cdot)q^+}(\Om)}^{r^+}+\|u_n\|_{ L^{r(\cdot)q^+}(\Om)}^{r^-}\right\}\right]\|f(\cdot,u_n(\cdot))v_n(\cdot)\|_{ L^{q^+}(\Om)}\nonumber\\
		&~~~~+ C\left[\|u_n\|_{ L^{ q^-}(\Om)}+\left\{\|u_n\|_{ L^{r(\cdot)q^-}(\Om)}^{r^+}+\|u_n\|_{ L^{r(\cdot)q^-}(\Om)}^{r^-}\right\}\right]\|f(\cdot,u_n(\cdot))v_n(\cdot)\|_{ L^{q^-}(\Om)}\nonumber\\
		&\leq C\left[\|u_n\|_{E}+\left\{\|u_n\|_{E}^{r^+}+\|u_n\|_{E}^{r^-}\right\}\right]\times\Big[\|f(\cdot,u_n(\cdot))v_n(\cdot)\|_{ L^{q^+}(\Om)}+\|f(\cdot,u_n(\cdot))v_n(\cdot)\|_{ L^{q^-}(\Om)}\Big]\n&=o_n(1).
		\end{align}}
	Now again from $(f1),$ it follows that $f(\cdot,u(\cdot))v_n(\cdot)\in L^{q^+}(\Om)\cap L^{q^-}(\Om)$  and hence by arguing similarly as above, we obtain 
	{\begin{align}\label{c25}
	& \int_{\Om}\int_{\Om}\frac{F(y,u(y))f(x,u(x))v_n(x)}{|x-y|^{\mu(x,y)}}dxdy\nonumber\\
	&\leq C\Big[\left\{\|u\|_{E}+\left(\|u\|_{E}^{r^+}+\|u\|_{E}^{r^-}\right)\right\}\Big\{\|f(\cdot,u(\cdot))v_n(\cdot)\|_{ L^{q^+}(\Om)}+\|f(\cdot,u(\cdot))v_n(\cdot)\|_{ L^{q^-}(\Om)}\Big\}\Big]\n&=o_n(1).
	\end{align}}
	Since $\{u_n\}$ is bounded,  combining \eqref{c19}, \eqref{c21},\eqref{c24} and \eqref{c25} we get
	{\begin{align}\label{c26}
		o_n(1)&=\langle J'(u_n)-J'(u), v_n\rangle\n
		&=m(\sigma(u_n))\mathcal{B}_{u_n}(v_n)-m(\sigma(u_n))\mathcal{B}_{u}(v_n)+\left[m(\sigma(u_n))-m(\sigma(u))\right]\mathcal{B}_{u}(v_n)\n&+\int_{\Om}\int_{\Om}\frac{F(y,u_n(y))f(x,u_n(x))v_n(x)}{|x-y|^{\mu(x,y)}}dxdy-\int_{\Om}\int_{\Om}\frac{F(y,u(y))f(x,u(x))v_n(x)}{|x-y|^{\mu(x,y)}}dxdy\n
		&=m(\sigma(u_n))[\mathcal{B}_{u_n}(v_n)-\mathcal{B}_{u}(v_n)]+o_n(1),
		\end{align}}
	that is, {\begin{align}\label{cnv}
	\lim_{n\to+\infty}\left[m(\sigma(u_n))\left(\mathcal{B}_{u_n}(v_n)-\mathcal{B}_{u}(v_n)\right)\right]=0.
	\end{align}}
	By using Remark \ref{main}, we have in particular
	{\begin{align}\label{c27}
	\lim_{n\to+\infty}[\mathcal{B}_{u_n}(v_n)-\mathcal{B}_{u}(v_n)]=0.
	\end{align}}
	Let us now recall well-known Simon inequalities,
	for all $\zeta,\xi\in \RR^N$, the followings hold:
	{ \begin{equation}\label{simon}
		\left\{ \begin{array}{rl}
		|\zeta-\xi|^p &\leq \frac{1}{p-1}\Big[ \left(|\zeta|^{p-2}\zeta-|\xi|^{p-2}\xi\right).\left(\zeta-\xi\right)\Big]^{\frac{p}{2}}
		\left(|\zeta|^p+|\xi|^p\right)^{\frac{2-p}{2}} ,1<p<2,\\
		|\zeta-\xi|^p	&\leq {2^p}\Big(|\zeta|^{p-2}\zeta-|\xi|^{p-2}\xi\Big).\left(\zeta-\xi\right)~,~~~~~~~~ p\geq 2.
		\end{array}
		\right.
		\end{equation}} Let us set 
	{\begin{align*} g_n^{(1)}(x,y)&:= \bigg[\frac{|
			u_n(x)-u_n(y)|^{p(x,y)-2}(u_n(x)-u_n(y))(v_n(x)-v_n(y))}{|
			x-y|^{N+s(x,y)p(x,y)}}\\
		&~~~~~~~~~~~~-\frac{|
			u(x)-u(y)|^{p(x,y)-2}(u(x)-u(y))(v_n(x)-v_n(y))}{|
			x-y|^{N+s(x,y)p(x,y)}}
		\bigg];\\
		 g_n^{(2)}(x,y)&:= \frac{|
			u_n(x)-u_n(y)|^{p(x,y)}} {|
			x-y|^{N+s(x,y)p(x,y)}};~
		g_n^{(3)}(x,y):=  \frac{|
			u(x)-u(y)|^{p(x,y)}} {|
			x-y|^{N+s(x,y)p(x,y)}};\\
		g_n^{(4)}(x)&:=V(x)\Big(|u_n|^{\p-2}u_n-|u|^{\p-2}u\Big) v_n(x).
		\end{align*}}
	For all $n\in\mathbb N,$ using  \eqref{simon}     we get $\DD\int_{\RR^N}\int_{\RR^N}g_n^{(1)}(x,y)dxdy\geq 0$ and $\DD\int_{\RR^N}\int_{\RR^N}g_n^{(4)}(x,y)dxdy$ $\geq 0$
	which together with \eqref{c27} and Remark \ref{main} infer that
	{\begin{align}\label{c27.1}
	\DD\lim_{n\to+\infty}\int_{\RR^N}\int_{\RR^N}g_n^{(1)}(x,y)dxdy=0
	\end{align}}
	and
	{\begin{align}\label{c27.2}
	\DD\lim_{n\to+\infty}\int_{\Om}g_n^{(4)}(x)dx= 0.
	\end{align}}
	In order to prove strong convergence of $\{u_n\}$ in $E$ by applying Simon's inequality, we divide our discussion into four cases. 
	Let us set
	{\begin{align*}
	&\Delta_1:=\{(x,y)\in \R:1<p(x,y)<2\} ;~~~~\Delta_2:=\{(x,y)\in \R:p(x,y)\geq2\};\n
	&\tilde{\Delta_1}:=\{x\in \Om:1<\p<2\} ;~~~~~~~~~~~~~~~~~~~~~~\tilde{\Delta_2}:=\{x\in \Om:\p\geq2\}.\n
	\end{align*}}
	
	\noi{$\rm(i)$ }Case $(x,y)\in\Delta_1.$ Sine $\{u_n\}$ is bounded in $E,$ using \eqref{simon}, \eqref{c27.1}, { generalized H\"{o}lder inequality and Lemma \ref{lemA1},}  we have
	{ \begin{align}\label{c28}
		&I_1:=\int_{\Delta_1}\frac{|
			v_n(x)-v_n(y)|^{p(x,y)}}{|
			x-y|^{N+s(x,y)p(x,y)}}dxdy\n
		&\leq \frac{1}{(p^{-}-1)}\int_{\Delta_1}\Big( g_n^{(1)}(x,y)\Big)^{\frac{p(x,y)}{2}}\Big( g_n^{(2)}(x,y)+g_n^{(3)}(x,y)\Big)^{\frac{2-p(x,y)}{2}}dxdy\n
		&\leq C \int_{\RR^N}\int_{\RR^N}\Big[\Big(( g_n^{(1)})^{\frac{p(x,y)}{2}}\cdot (g_n^{(2)})^{\frac{2-p(x,y)}{2}}\Big)+\Big( (g_n^{(1)})^{\frac{p(x,y)}{2}}\cdot (g_n^{(3)})^{\frac{2-p(x,y)}{2}}\Big)\Big] {dxdy}\nonumber\\
		&\leq C \Big[ \|( g_n^{(1)})^{\frac{p(\cdot,\cdot)}{2}}\|_{ L^{\frac{2}{p(\cdot,\cdot)}}(\RR^N\times\RR^N)}  
		\Big(\| (g_n^{(2)})^{\frac{2-p(\cdot,\cdot)}{2}}\|_{ L^{\frac{2}{2-p(\cdot,\cdot)}}(\RR^N\times\RR^N)}+\|(g_n^{(3)})^{\frac{2-p(\cdot,\cdot)}{2}}\|_{ L^{\frac{2}{2-p(\cdot,\cdot)}}(\RR^N\times\RR^N)}\Big)\Big]\nonumber \\
		&\leq C\Big[\Big( \| g_n^{(1)}\|_{ L^1(\RR^N\times\RR^N)}^{\frac{p^+}{2}} + \| g_n^{(1)}\|_{ L^1(\RR^N\times\RR^N)}^{\frac{p^-}{2}}\Big)\nonumber\\
		&\times \Big( \| g_n^{(2)}\|_{ L^1(\RR^N\times\RR^N)}^{\frac{2-p^+}{2}} + \| g_n^{(2)}\|_{ L^1(\RR^N\times\RR^N)}^{\frac{2-p^-}{2}}
		+\|g_n^{(3)}\|_{ L^1(\RR^N\times\RR^N)}^{\frac{2-p^+}{2}} + \| g_n^{(3)}\|_{ L^1(\RR^N\times\RR^N)}^{\frac{2-p^-}{2}} \Big) 
		\Big]=o_n(1).
		\end{align}}
	\noi {\rm(ii)} Case $(x,y)\in \tilde{\Delta_1}.$ Since $\{u_n\}$ is bounded, \eqref{simon} \eqref{c27.2}, {H\"{o}lder inequality and Lemma \ref{lemA1}} 
	imply
	{ \begin{align}\label{c29}
		&I_2:= \int_{\tilde{\Delta_1}}V(x)|v_n(x)|^{\p}dx\n&\leq \frac{1}{(p^{-}-1)}\int_{\tilde{\Delta_1}}(g_n^{(4)}(x))^{\p/2}\times\left\{V(x)
		(|u_n(x)|^{\p}+|u(x)|^{\p})\right\}^{\frac{2-\p}{2}}dx\n
		&\leq C\Big[\int_{\Om}(g_n^{(4)}(x))^{\frac{\p}{2}}(V(x)|u_n(x)|^{\p})^{\frac{2-\p}{2}}dx+\int_{\Om}(g_n^{(4)}(x))^{\frac{\p}{2}}(V(x)|u(x)|^{\p})^{\frac{2-\p}{2}}dx\Big]\n
		&\leq C\|(g_n^{(4)})^{\frac{\ol p(\cdot)}{2}}\|_{ L^{\frac{2}{\ol p(\cdot)}}(\Om)}
		\Big[~\|(V |u_n|^{\ol p(\cdot)})^{^{\frac{2-\ol p(\cdot)}{2}}}\|_{ L^{\frac{2}{2-\ol p(\cdot)}}(\Om)}
		+\|(V|u|^{\ol p(\cdot)})^{^{\frac{2-\ol p(\cdot)}{2}}}\|_{ L^{\frac{2}{2-\ol p(\cdot)}}(\Om)}\Big]\n
		&\leq C\Big[\|g_n^{(4)}\|_{ L^{1}(\Om)}^{\frac{ p^-}{2}}+\|g_n^{(4)}\|_{ L^{1}(\Om)}^{\frac{ p^+}{2}}\Big]\n &~~~~~~\times\Big[\|V |u_n|^{\ol p(\cdot)}\|_{ L^{1}(\Om)}^{\frac{2- p^-}{2}}+\|V|u_n|^{\ol p(\cdot)}\|_{ L^{1}(\Om)}^{\frac{2- p^+}{2}}+\|V |u|^{\ol p(\cdot)}\|_{ L^{1}(\Om)}^{\frac{2- p^-}{2}}+\|V|u|^{\ol p(\cdot)}\|_{ L^{1}(\Om)}^{\frac{2- p^+}{2}}\Big]
		=o_n(1).
\end{align} 
}
	{\rm(iii)} Case $(x,y)\in \Delta_2.$ Using \eqref{simon},\eqref{c27.1} and {generalized H\"{o}lder inequality}, we obtain
	{\begin{align}\label{c30}
		I_3:=\int_{\Delta_2}\frac{|v_n(x)-v_n(y)|^{p(x,y)}}{|x-y|^{N+s(x,y)p(x,y)}}dxdy\leq 2^{p^+}\int_{\RR^N}\int_{\RR^N}g_n^{(1)}(x,y)dxdy=o_n(1).
		\end{align}}
	{\rm (iv)} Case $(x,y)\in \tilde{\Delta_2}.$  Using \eqref{simon}, \eqref{c27.2} and {H\"{o}lder inequality}, we deduce
	{\begin{align}\label{c31}
	I_4:=\int_{\tilde{\Delta_2}}V(x)|v_n(x)|^{\p}dx\leq 2^{p^+}\int_{\Om}g_n^{(4)}(x)dx=o_n(1).
	\end{align}}
	Now taking into account \eqref{c27} and \eqref{c28}-\eqref{c31}, we get
	{ \begin{align*}
		{\rho}_E(v_n)&=\int_{\RR^N}\int_{\RR^N} \frac{|v_n(x)-v_n(y)|^{p(x,y)}} {|x-y|^{N+s(x,y)p(x,y)}}dxdy
		+\int_{\Om}V(x)|v_n(x)|^{\p} dx\n
		&=I_1+I_2+I_3+I_4=o_n(1).
		\end{align*}}
	Hence finally { Proposition \ref{norm-convergence}} yields that
	$$u_n\to u \text{ ~in ~} E \text{ ~ strongly~ as ~} \gt.$$ 
	{{\bf {Case}}}:$\DD\inf_{n\in \mathbb N }\|u_n\|_E= 0.$ If $0$ is an isolated point for the  sequence $\{\|u_n\|_E\}$ then there exists a subsequence $\{u_{n_k}\}$ of $\{u_n\}$ such that
	$$\DD\inf_{n\in \mathbb N }\|u_{n_k}\|_E= d_*>0$$ and therefore we can proceed as before. Otherwise, $0$ is an accumulation point of the sequence $\{\|u_{n_k}\|_E\}.$ Hence there
	is a subsequence $\{u_{n_k}\}$ of $\{u_n\}$ such that $u_{n_k}\to 0$ in $E$ strongly.\\
	{\noi Next we consider the non-degenerate case, i.e.,  $a>0.$ Hence the above proof reduces to Case $1$. Then making use of Remark \ref{rem-M} and  \eqref{m4} in place of Remark \ref{main} in the above and arguing in a similar fashion with some minute modifications,  the result follows.}
\end{proof}
\noi Now we state the following version of Mountain pass theorem.
{\begin{theorem}[Mountain pass theorem]\label{mp}
		Let $E$ be a real Banach space and suppose that $J\in C^1(E,\RR)$ satisfies the condition 
		$$\max\{J(0),J(u_1)\}\leq i<j\leq \inf_{\|u\|_E=\varrho_0}J(u)$$ for some $i<j, ~\varrho_0>0$ and $u_1\in E$ with $\|u_1\|_E=\varrho_0.$ Let $c\geq j$ be characterized by $$c_*=\DD\inf_{\nu\in\Gamma}\max_{0\leq t\leq1}J(\nu(t)),$$ where $\Gamma=\{\nu\in C\left([0,1],E\right), \nu(0)=0,\nu(1)=u_1\}$ is the set of continuous paths joining $0$ and $u_1.$ Then there exists a Cerami sequence $\{u_n\}\subset E$ such that 
		$$J(u_n)\to c\geq j \text{~ and~} (1+\|u_n\|_E)\|J'(u_n)\|_{E^*}\to 0 \text{ ~as~}\gt.$$
\end{theorem}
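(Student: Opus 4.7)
The plan is to prove this as a variant of the classical Mountain Pass Theorem refined to yield Cerami (weighted Palais--Smale) sequences. The argument combines Ekeland's variational principle applied in the space of paths $\Gamma$ with a weighted pseudo-gradient deformation. Note that this result is due to Cerami/Ekeland/Schechter in various forms, and the proof mirrors the classical min-max argument with the critical modification of weighting the pseudo-gradient vector field by $(1+\|u\|_E)^{-1}$ to produce the $(1+\|u_n\|_E)$ factor in the conclusion.

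First I would equip $\Gamma$ with the complete metric $d(\nu_1,\nu_2):=\max_{t\in[0,1]}\|\nu_1(t)-\nu_2(t)\|_E$ and set $\phi:\Gamma\to\RR$, $\phi(\nu):=\max_{t\in[0,1]}J(\nu(t))$. Since $J\in C^1(E,\RR)$, $\phi$ is continuous on $(\Gamma,d)$, and by the geometry hypothesis $\phi(\nu)\geq j$ for every $\nu\in\Gamma$, so $\inf_\Gamma\phi=c_*\geq j$. Choose a minimizing sequence $\{\nu_n\}\subset\Gamma$ with $\phi(\nu_n)\leq c_*+1/n^2$. Ekeland's variational principle produces $\tilde\nu_n\in\Gamma$ with $\phi(\tilde\nu_n)\leq\phi(\nu_n)$, $d(\tilde\nu_n,\nu_n)\leq 1/n$, and the optimality property
\begin{equation*}
\phi(\tilde\nu_n)\leq \phi(\nu)+\tfrac{1}{n}\,d(\nu,\tilde\nu_n)\quad\text{for all }\nu\in\Gamma.
\end{equation*}

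Next, introduce $K_n:=\{t\in[0,1]:J(\tilde\nu_n(t))=\phi(\tilde\nu_n)\}$, which is nonempty and compact. I claim there exists $t_n\in K_n$ such that $u_n:=\tilde\nu_n(t_n)$ obeys $(1+\|u_n\|_E)\|J'(u_n)\|_{E^*}\leq 3/n$. I argue this by contradiction: suppose $(1+\|\tilde\nu_n(t)\|_E)\|J'(\tilde\nu_n(t))\|_{E^*}>3/n$ for every $t\in K_n$. By continuity of $J'$ this inequality persists on an open neighborhood $\mathcal{O}$ of $K_n$ in $[0,1]$. Because $\max\{J(0),J(u_1)\}\leq i<j\leq c_*$ and $\phi(\tilde\nu_n)$ is close to $c_*$, no point of $K_n$ can occur at $t=0$ or $t=1$, so $\mathcal{O}$ stays away from the endpoints. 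Construct a locally Lipschitz weighted pseudo-gradient $W:E\setminus\{J'=0\}\to E$ satisfying $\|W(u)\|_E\leq 2/((1+\|u\|_E)\|J'(u)\|_{E^*})$ and $\langle J'(u),W(u)\rangle\geq 1/(1+\|u\|_E)$. Using a cutoff $\chi\in C_c(\mathcal{O},[0,1])$ with $\chi\equiv 1$ on $K_n$, define the deformed path $\bar\nu_n(t):=\tilde\nu_n(t)-\tau\,\chi(t)\,W(\tilde\nu_n(t))$ for a small parameter $\tau>0$. Since $\chi$ vanishes near $t=0,1$, $\bar\nu_n\in\Gamma$. A standard computation along the flow gives
\begin{equation*}
J(\bar\nu_n(t))\leq J(\tilde\nu_n(t))-\tfrac{\tau\chi(t)}{2(1+\|\tilde\nu_n(t)\|_E)}\cdot\tfrac{3/n}{1+\|\tilde\nu_n(t)\|_E}+o(\tau)\quad\text{for }t\in\mathcal{O},
\end{equation*}
while $J$ is unchanged off $\mathcal{O}$. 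This yields $\phi(\bar\nu_n)\leq\phi(\tilde\nu_n)-\alpha_n\tau$ for some $\alpha_n>2/n\cdot\sup_t\|W(\tilde\nu_n(t))\|_E$, contradicting the Ekeland optimality. Hence the claim holds.

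Finally, since $J(u_n)=\phi(\tilde\nu_n)\in[c_*,c_*+1/n^2]$, we get $J(u_n)\to c_*\geq j$ and $(1+\|u_n\|_E)\|J'(u_n)\|_{E^*}\to 0$, which is precisely the desired Cerami sequence. The main obstacle is \textbf{Step 3}: constructing the weighted pseudo-gradient deformation so that it (i) respects the endpoint constraints in $\Gamma$, (ii) decreases $\phi$ faster than $1/n$ per unit $d$-distance, and (iii) delivers the sharp weighting factor $(1+\|u\|_E)$ rather than the standard Palais--Smale factor. The cutoff away from $\{0,1\}$ is possible precisely because the strict separation $J(0),J(u_1)\leq i<j\leq c_*$ forces the maximum set $K_n$ into the open interval $(0,1)$; weaker geometry would break this argument.
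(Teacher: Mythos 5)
The paper does not prove Theorem~\ref{mp} at all; it is stated as a known refinement of the mountain pass theorem (usually attributed to Cerami; see also Ekeland/Ghoussoub and Schechter) and is invoked as a black box in the proof of Theorem~\ref{mainthm}. So any proof you supply is extra content, and the only question is whether your sketch is correct. It is not: the argument in your ``Step 3'' has a genuine dimensional gap that cannot be repaired by fiddling with constants.

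Concretely, you equip $\Gamma$ with the unweighted sup-metric $d(\nu_1,\nu_2)=\max_t\|\nu_1(t)-\nu_2(t)\|_E$, run Ekeland to get an almost-minimizer $\tilde\nu_n$ with slope $\leq 1/n$ in this metric, and then try to contradict the slope bound by descending along a weighted pseudo-gradient $W$ with $\|W(u)\|_E\leq 2/\big((1+\|u\|_E)\|J'(u)\|_{E^*}\big)$ and $\langle J'(u),W(u)\rangle\geq 1/(1+\|u\|_E)$. Plugging $\bar\nu_n=\tilde\nu_n-\tau\chi\,W\circ\tilde\nu_n$ into the Ekeland inequality $\phi(\tilde\nu_n)-\phi(\bar\nu_n)\leq\frac1n d(\tilde\nu_n,\bar\nu_n)$, dividing by $\tau$, and letting $\tau\to 0$ gives, \emph{at best},
\begin{equation*}
\min_{t\in K_n}\frac{1}{1+\|\tilde\nu_n(t)\|_E}
\;\leq\;\frac{2}{n}\cdot\frac{1}{\displaystyle\min_{t\in\mathcal{O}}\big(1+\|\tilde\nu_n(t)\|_E\big)\|J'(\tilde\nu_n(t))\|_{E^*}},
\end{equation*}
that is,
\begin{equation*}
\min_{t\in\mathcal{O}}\big(1+\|\tilde\nu_n(t)\|_E\big)\|J'(\tilde\nu_n(t))\|_{E^*}
\;\leq\;\frac{2}{n}\,\max_{t\in K_n}\big(1+\|\tilde\nu_n(t)\|_E\big).
\end{equation*}
The factor $\max_{t\in K_n}(1+\|\tilde\nu_n(t)\|_E)$ on the right is \emph{a priori} unbounded in $n$ (indeed, if $\max_{K_n}\|\tilde\nu_n(t)\|_E$ were bounded, you would already have a bounded Palais--Smale sequence and would not need a Cerami refinement), so this does not produce a Cerami sequence. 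Your own displayed descent estimate and the claim ``$\alpha_n>\frac{2}{n}\sup_t\|W(\tilde\nu_n(t))\|_E$'' amount to asserting $\max_{t\in K_n}(1+\|\tilde\nu_n(t)\|_E)<3/4$, which is impossible since $1+\|\cdot\|_E\geq1$.

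The missing idea is that the Cerami weighting must enter through the metric on $\Gamma$, not through the pseudo-gradient alone. One standard fix is to replace the norm metric on $E$ by the Finsler metric $\delta(u,v)=\inf_\gamma\int_0^1\|\gamma'(s)\|_E\big(1+\|\gamma(s)\|_E\big)^{-1}ds$ (infimum over $C^1$ paths from $u$ to $v$), equip $\Gamma$ with $d_\delta(\nu_1,\nu_2)=\max_t\delta(\nu_1(t),\nu_2(t))$, verify completeness, and run Ekeland there. In the $\delta$-metric the local slope of $J$ at $u$ is exactly $(1+\|u\|_E)\|J'(u)\|_{E^*}$, so the $1/n$ slope bound coming from Ekeland translates, after an \emph{unweighted} pseudo-gradient deformation, directly into the Cerami estimate without the spurious factor. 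Your observations about $K_n$ sitting in the open interval $(0,1)$ thanks to the strict gap $\max\{J(0),J(u_1)\}\leq i<j\leq c_*$, and about cutting off near the endpoints, are correct and are needed in that argument as well; only the choice of metric and the corresponding deformation have to be changed.
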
}
\begin{proof}[Proof of Theorem \ref{mainthm}:] Since $J$ satisfies Lemma \ref{mp1} and Lemma \ref{mp2}, by Mountain Pass theorem {(Theorem \ref{mp}),} there exists a 
	Cerami sequence $\{u_n\}$ for $J$ in $E$ such that 
	\begin{align*}
	J(u_n)\to c_* \text{~~~~~ and ~~~~~} (1+\|u_n\|_E)\|J'(u_n)\|_{E^*}\to 0, \text { as } n\to\infty,
	\end{align*}
	where $c_*>0$ is the mountain pass level defined by $$c_*:=\DD\inf_{{\nu}\in\Gamma}\sup_{t\in[0,1]}J\left(\nu(t)\right).$$
	Now  Lemma \ref{bounded} implies $u_n\to u_*$ strongly in $E$ and hence $J'(u_*)=0.$ This infers that $u_*$is a critical point of $J$ and 
	therefore a weak solution to \ref{mainprob}. Also $J(u_*)=c_*>0$ and since
	$J(0)=0,$ finally we conclude that $u_*\not\equiv0.$ 
\end{proof}
\subsection{Proof of Theorem \ref{ground-state 2}}
\begin{proof}[Proof of Theorem \ref{ground-state 2}:]
	First we note that
	the condition $(f4)$ is the consequence of  $(f4)'.$ Indeed, for $t_2\geq t_1>0,$ $(f4)'$ implies
	{\begin{align*}
		&\mathcal{F}(x,t_2)-\mathcal{F}(x,t_1)\n&={\theta p^+}\left[\frac{2}{\theta p^+}\left(f(x,t_2)t_2-f(x,t_1)t_1\right)-\left(F(x,t_2)-F(x,t_1)\right)\right]\n
		&={\theta p^+}\Big[\int_0^{t_2} \frac{f(x,t_2)}{t_2^{\gr-1}} \tau ^{\gr-1}d\tau-\int_0^{t_1} \frac{f(x,t_1)}{t_1^{\gr-1}} \tau ^{\gr-1}d\tau-\int_{t_1}^{t_2} \frac{f(x,\tau)}{\tau^{\gr-1}} \tau ^{\gr-1}d\tau\Big]\n
		&={\theta p^+}\Big[\int_{t_1}^{t_2} \Big(\frac{f(x,t_2)}{t_2^{\gr-1}}-\frac{f(x,\tau)}{\tau^{\gr-1}}\Big) \tau ^{\gr-1}d\tau+\int_0^{t_1}\Big(\frac{f(x,t_2)}{t_2^{\gr-1}}- \frac{f(x,t_1)}{t_1^{\gr-1}}\Big) \tau ^{\gr-1}d\tau\Big]\geq 0.
		\end{align*}}
	Similarly for $0>t_1\geq t_2,$ we can deduce $\mathcal{F}(x,t_2)-\mathcal{F}(x,t_1)\geq 0,$ 
	that is $\mathcal{F}(\cdot,t)$ is increasing for $t\geq0$ and decreasing for $t\leq0.$ Hence $(f4)$ follows.
	Therefore there exists a weak  solution $v_*\not=0$ of \eqref{mainprob}, thanks to Theorem \ref{mainthm}, with
	$J'(v_*)=0$ and $J(v_*)=b_*,$ where $b_*$ is given as 
	$$b_*:=\DD\inf_{\nu\in\Gamma}\max_{0\leq t\leq1} J(\nu(t)),$$  where $\Gamma=\{u\in C([0,1],E):\nu(0)=0,J(\nu(1))<0\}$.
	We claim that $v_*$ is a ground state solution to \eqref{mainprob}.
	The Nehari manifold associated with the functional $J$ is defined as,
	$$\mathcal{N}:=\{u\in E\setminus\{0\}: \langle J'(u),u\rangle=0\}.$$ 
	Since $v_*$ is a critical point of $J,$ we have $v_*\in \mathcal{N}$. Let $$\al_*=\DD\inf_{u\in\mathscr{N}} J(u).$$
	Hence $\al_*\leq b_*.$ Therefore it is left to show  $b_*\leq \al_*.$
	For  $u\in\mathcal{N}$ define the fibering map $\mathcal{H}:[0,+\infty)\to\RR$ by $\mathcal{H}(t)=J(tu).$ Then $\mathcal{H}$ is differentiable with respect to $t$ and 
	{ \begin{align}\label{g1}
	\mathcal{H}'(t)=\langle J'(tu),u\rangle&=m(\sigma(tu))\Big[\int_{\RR^N}\int_{\RR^N}t^{p(x,y)-1}\frac{|u(x)-u(y)|^{p(x,y)}}{|x-y|^{N+s(x,y)p(x,y)}}dx dy\n&~~+
	\int_{\Om}t^{\p-1}V(x)|u|^{\p}dx\Big]
	-\int_{\Om}\int_{\Om}\frac{F(y,tu)f(x,tu)u(x)}{|x-y|^{\mu(x,y)}}dxdy.
	\end{align}}
	Also we have $\langle J'(u),u\rangle=0,$ that is, {\begin{align}\label{g2}
	m(\sigma(u))\rho_E(u)=I(u):=\int_{\Om}\int_{\Om}\frac{F(y,u)f(x,u)u(x)}{|x-y|^{\mu(x,y)}}dxdy.
	\end{align}}	
		 \underline {Case I}: $a=0.$ From $(M1)$, Remark \ref{main}, \eqref{g1} and \eqref{g2} for $t>1,$ it follows that 
	{\begin{align}\label{g2.0}
	\mathcal{H}'(t)&=m(\sigma(tu))\Big[\int_{\RR^N}\int_{\RR^N}t^{p(x,y)-1}\frac{|u(x)-u(y)|^{p(x,y)}}{|x-y|^{N+s(x,y)p(x,y)}}dx dy+
	\int_{\Om}t^{\p-1}V(x)|u|^{\p}dx\Big]\n&-\int_{\Om}\int_{\Om}\frac{F(y,tu)f(x,tu)u(x)}{|x-y|^{\mu(x,y)}}dxdy
	-t^{\theta p^+-1}m(\sigma(u))\rho_E(u)+t^{\theta p^+-1}I(u)\n
	&\leq b[\sigma(tu)]^{\theta-1}t^{p^+-1}\rho_E(u)-t^{\theta p^+-1}b[\sigma(u)]^{\theta-1}\rho_E(u)+\mathcal{G}(t)\n
	&\leq b t^{p^+(\theta-1)}[\sigma(u)]^{\theta-1}t^{p^+-1}\rho_E(u)-t^{\theta p^+-1}b[\sigma(u)]^{\theta-1}\rho_E(u)+\mathcal{G}(t)=\mathcal{G}(t),
	\end{align}}
	where {$$\mathcal{G}(t)=t^{\theta p^+-1}\int_{\Om}\int_{\Om}\frac{F(y,u)f(x,u)u(x)}{|x-y|^{\mu(x,y)}}dxdy-\int_{\Om}\int_{\Om}\frac{F(y,tu)f(x,tu)u(x)}{|x-y|^{\mu(x,y)}}dxdy.$$ }
	Since $\mathcal{F}(x,\tau)=2\tau f(x,\tau)-\theta p^+ F(x,\tau)\geq 0$ for all $x\in\RR^N, \tau\in\RR,$ it follows that {$$\frac{d}{dt}\frac{F(x,tu)}{t^{\gr}}=\frac{t^\gr f(x,tu)u(x)-\gr t^{\gr-1}F(x,tu)}{t^{\theta p^+}}=\frac{ f(x,tu)tu(x)-\gr F(x,tu)}{t^{\gr-1}}\geq 0.$$}
	Thus  we obtain 
	{\begin{align}\label{g3}
	\frac{F(x,tu)}{t^{\gr}} \text{~~ is increasing function in  $t> 0$ for all $u\in E$}.
	\end{align}}
	Now for $t>1,$ using $(f4)',$ \eqref{g3} and Remark \ref{f5}, we deduce  
{	\begin{align}\label{g4}
	\mathcal{G}(t)&= t^{\theta p^+-1}\Big[\int_{\Om}\int_{\Om}\frac{F(y,u)f(x,u)|u|^\gr}{|x-y|^{\mu(x,y)} |u|^{\gr-2}u}dxdy-\int_{\Om}\int_{\Om}\frac{F(y,tu)f(x,tu)|u|^{\gr}}{|x-y|^{\mu(x,y)}t^{\gr}|tu|^{\gr-2}tu}dxdy\Big]\n
	&\leq t^{\theta p^+-1}\Big[\int_{\Om}\int_{\Om}\frac{F(y,u)f(x,u)|u|^\gr}{|x-y|^{\mu(x,y)} |u|^{\gr-2}u}dxdy-\int_{\Om}\int_{\Om}\frac{F(y,tu)f(x,u)|u|^{\gr}}{|x-y|^{\mu(x,y)}t^{\gr}|u|^{\gr-2}~u}dxdy\Big]\n
	&= t^{\theta p^+-1}\Big[\int_{\Om}\Big(\int_{\Om}\frac{F(y,u)-\frac{F(y,tu)}{t^\gr}}{|x-y|^{\mu(x,y)}}dy\bigg){f(x,u)}u~ dx\Big]\leq 0.
	\end{align}}
	Combining \eqref{g2.0} and \eqref{g4}, we get
	$\mathcal{H}'(t)\leq 0 $ for $t>1.$ Arguing similarly as above we can deduce $\mathcal{H}'(t)\geq 0$ for $t\leq1.$ Therefore $1$ is the maximum point for $\mathcal{H},$ that is $J(u)=\DD\max_{t\geq 0}J(tu).$ Next we define the map $\nu:[0,1]\to E$ as $\nu(t)=(t_0u)t,$ where $t_0>1$ satisfies $J(t_0u)<0.$ This map is well-defined due to Lemma \ref{mp2}. So ${\nu}\in\Gamma.$ Hence 
	$$b_*\leq \DD \max_{0\leq t\leq1} J(\nu(t))\leq \DD \max_{0\leq t\leq1} J(tu)=J(u).$$ Since $u\in \mathcal{N}$ is arbitrary, we get $b_*\leq\al_*.$
	Therefore $$\DD\inf_{u\in\mathcal{N}}J(u)=\al_*=b_*=J(v_*).$$ 
	{ \underline {Case II}: $a>0.$ By replacing Remark \ref{main} with Remark \ref{rem-M} in \eqref{g2.0} and arguing in a similar way as in Case I  we can conclude the proof.} 
\end{proof}
\subsection{Proof of Theorem \ref{infinite-sol}}
To prove the Theorem \ref{infinite-sol} we need the Fountain theorem of Bartsch \cite[Theorem 2.5]{bartsch}; (see also \cite[Theorem 3.6]{willem}). 
The next lemma is due to \cite{fabian}.
\begin{lemma}\label{ftlem}
	Let $X$ be a reflexive and separable Banach space. Then there are $\{e_n\}\subset X$ and $\{f_n^*\}\subset X^*$ such that{$$X=\overline{span\{e_n:n=1,2,3..\}}, ~~X^*=\overline{span\{f_n^*:n=1,2,3..\}},$$} and { \begin{equation*}
		\langle f_i^*,e_j\rangle=
		\left\{ \begin{array}{rl}
		& 1  \text{~~~~if~~~} i=j\\
		& 0 \text{~~~~if~~~} i\not=j.
		\end{array}
		\right.
		\end{equation*}}
\end{lemma}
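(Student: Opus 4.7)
The lemma is the classical Markushevich basis theorem for a separable reflexive Banach space, and the plan is to construct the biorthogonal pair $\{(e_n, f_n^*)\}$ by a zig-zag induction combining Hahn--Banach with Gram--Schmidt-type corrections that preserve the biorthogonality $\langle f_i^*, e_j\rangle = \delta_{ij}$ at every step.

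First I would use separability and reflexivity: since $X$ is reflexive and separable, $X^{**}=X$ is separable, which forces $X^*$ to be separable as well (by the standard fact that $Y^{**}$ separable implies $Y^*$ separable, applied to $Y = X$). So I fix countable dense sequences $\{u_k\}_{k\geq 1}\subset X\setminus\{0\}$ and $\{\phi_k\}_{k\geq 1}\subset X^*\setminus\{0\}$. I would then construct $(e_n,f_n^*)$ so that $u_k$ is captured by $\mathrm{span}\{e_1,\dots,e_{2k-1}\}$ at odd steps and $\phi_k$ by $\mathrm{span}\{f_1^*,\dots,f_{2k}^*\}$ at even steps. At the odd step $n=2k-1$, assuming $u_k$ is not already in $Y_{n-1}:=\mathrm{span}\{e_1,\dots,e_{n-1}\}$ (otherwise the step is trivialized), I set
$$\tilde e_n := u_k - \sum_{j=1}^{n-1}\langle f_j^*, u_k\rangle\, e_j,$$
so that $\langle f_j^*,\tilde e_n\rangle = 0$ for all $j<n$. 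After rescaling I obtain $e_n$, and Hahn--Banach (first prescribing a linear functional on $\mathrm{span}\{e_1,\dots,e_n\}$ with the right values, then extending) yields $f_n^*\in X^*$ with $\langle f_n^*,e_n\rangle = 1$ and $\langle f_n^*,e_j\rangle = 0$ for $j<n$. At the even step $n=2k$, I perform the dual construction:
$$\tilde f_n^* := \phi_k - \sum_{j=1}^{n-1}\langle\phi_k, e_j\rangle\, f_j^*,$$
and use Hahn--Banach on the finite-codimensional closed subspace $\bigcap_{j<n}\ker f_j^*\subset X$ to find $e_n$ with $\langle\tilde f_n^*,e_n\rangle = 1$ and $\langle f_j^*,e_n\rangle = 0$ for $j<n$, then set $f_n^*:=\tilde f_n^*$ after rescaling.

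Biorthogonality is enforced by the Gram--Schmidt subtractions, and the density statements follow automatically: each $u_k$ lies in $\mathrm{span}\{e_1,\dots,e_{2k-1}\}$ so $\overline{\mathrm{span}\{e_n\}}\supseteq\overline{\{u_k\}}=X$, and symmetrically $\overline{\mathrm{span}\{f_n^*\}}=X^*$. The main technical obstacle is the even step: one must verify that $\tilde f_n^*$ is not identically zero on $\bigcap_{j<n}\ker f_j^*$, for only then does the required $e_n$ exist. This is handled by the standard finite-codimension duality: if $\tilde f_n^*$ vanishes on that intersection, it necessarily lies in $\mathrm{span}\{f_1^*,\dots,f_{n-1}^*\}$, meaning $\phi_k$ has already been captured and the step may be skipped. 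Reflexivity enters the argument precisely through its role in guaranteeing separability of $X^*$; without it one could only produce a total family of $f_n^*$'s rather than a norm-dense one.
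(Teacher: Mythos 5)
The paper does not supply a proof of Lemma \ref{ftlem}; it is quoted as a known result from the reference \cite{fabian} and then used as a black box when setting up the Fountain and Dual Fountain theorem machinery. Your argument is the standard biorthogonal (Markushevich) system construction found in that reference, and it is essentially correct. The two load-bearing points are both handled properly: separability of $X^*$ indeed follows from reflexivity plus separability of $X$ (apply the fact that separability of a dual space passes down to the predual, with $X^{**}\cong X$), and the alternating Gram--Schmidt-type corrections preserve $\langle f_i^*,e_j\rangle=\delta_{ij}$ at every step, with Hahn--Banach supplying the new functional at odd steps and the new vector inside $\bigcap_{j<n}\ker f_j^*$ at even steps. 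The degenerate-even-step issue you flag is the right one, and the resolution is correct: a linear functional vanishing on the common kernel of finitely many functionals is a linear combination of them (finite-codimension duality, valid in any vector space, no boundedness needed), so in that case $\phi_k$ already lies in $\mathrm{span}\{f_1^*,\dots,f_{n-1}^*\}$ and the step may be skipped. The only thing to tighten is bookkeeping when steps are skipped (reindex so the sequences stay infinite unless one of the spaces is finite-dimensional, in which case the statement is trivial), and to state explicitly that density follows because each $u_k$, respectively $\phi_k$, is absorbed into $\mathrm{span}\{e_1,\dots,e_{2k-1}\}$, respectively $\mathrm{span}\{f_1^*,\dots,f_{2k}^*\}$.
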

Let us denote {\begin{align}\label{not} 
X_n=span\{e_n\},~~~ Y_k=\bigoplus_{n=1}^k X_n \text{ ~~~and } Z_k=\overline{\bigoplus_{n=k}^\infty X_n}.\end{align} }
Now we recall the following Fountain theorem from  \cite{alves}.
\begin{theorem}[Fountain theorem]\label{ft}
	Assume that $J\in C^1(E,\RR)$ satisfies the Cerami condition $(C)_c$ and $J(-u)=J(u).$ If for each sufficiently large $k\in\mathbb N,$ there exists $\varrho_k>\delta_k>0$ such that
	\begin{itemize}
		\item[${\rm(B1)}$] $b_k:=\DD\inf\{J(u):u\in Z_k,~\|u\|_E=\delta_k\}\to+\infty,$ as $k\to+\infty,$
		\item[$\rm(B2)$] $a_k:=\max\{J(u):u\in Y_k,~\|u\|_E=\varrho_k\}\leq0.$
	\end{itemize}
	then $J$ has a sequence of critical points $\{u_k\}$ such that $J(u_k)\to+\infty.$
\end{theorem}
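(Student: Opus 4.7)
The plan is to prove the abstract Fountain theorem by constructing, for each sufficiently large $k$, a minimax value $c_k \geq b_k$ that is forced to be a critical value of $J$ via a Cerami-type deformation lemma combined with the $\mathbb{Z}_2$-equivariant structure induced by the evenness of $J$.

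First, I would set up the minimax framework. For each large $k$, define
\[
B_k := \{u \in Y_k : \|u\|_E \leq \varrho_k\}, \qquad S_k := \{u \in Z_k : \|u\|_E = \delta_k\},
\]
and the class of admissible maps
\[
\Gamma_k := \{\gamma \in C(B_k, E) : \gamma \text{ is odd and } \gamma|_{\partial B_k} = \mathrm{id}\}.
\]
Set $c_k := \inf_{\gamma \in \Gamma_k} \max_{u \in B_k} J(\gamma(u))$. The key geometric ingredient (a Borsuk--Ulam / genus-type intersection lemma) is that for every $\gamma \in \Gamma_k$ one has $\gamma(B_k) \cap S_k \neq \emptyset$; this follows because the odd map $u \mapsto P_{k-1}\gamma(u)$ (with $P_{k-1}$ the projection onto $Y_{k-1}$) must vanish somewhere on $B_k$ by Borsuk--Ulam, and at such a point the $\|\cdot\|_E$-level $\delta_k$ is attained by a continuity/intermediate-value argument using that $\gamma = \mathrm{id}$ on $\partial B_k$ and $\varrho_k > \delta_k$. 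Hypothesis (B1) then gives $c_k \geq b_k \to +\infty$.

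Second, I would verify that each $c_k$ is finite and in fact a critical value. The upper bound $c_k \leq \max_{u \in B_k} J(u)$ is obtained by taking $\gamma = \mathrm{id}$, so finiteness reduces to the continuity of $J$ on the finite-dimensional compact set $B_k$; together with (B2), this also shows $c_k \geq b_k > 0 > a_k = \max_{\partial B_k} J$ for large $k$, so any optimal $\gamma$ pushes the maximum strictly into the interior. Suppose now that $c_k$ is not a critical value. Because $J$ is even and satisfies the Cerami condition $(C)_{c_k}$, one invokes the equivariant quantitative deformation lemma (see e.g.\ Willem's monograph): for any sufficiently small $\bar\varepsilon > 0$, there exist $\varepsilon \in (0, \bar\varepsilon)$ and an odd homeomorphism $\eta \in C(E, E)$ with $\eta = \mathrm{id}$ outside $J^{-1}([c_k - \bar\varepsilon, c_k + \bar\varepsilon])$ and $\eta(J^{c_k + \varepsilon}) \subset J^{c_k - \varepsilon}$. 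Choosing $\bar\varepsilon < c_k - a_k$ guarantees $\eta = \mathrm{id}$ on $\partial B_k$.

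Third, I would derive a contradiction. Pick $\gamma \in \Gamma_k$ with $\max_{B_k} J\circ \gamma < c_k + \varepsilon$, and set $\tilde\gamma := \eta \circ \gamma$. Then $\tilde\gamma$ is continuous, odd (composition of odd maps), and equals $\gamma = \mathrm{id}$ on $\partial B_k$, so $\tilde\gamma \in \Gamma_k$; moreover $\max_{B_k} J\circ \tilde\gamma \leq c_k - \varepsilon$, contradicting the definition of $c_k$. Hence $c_k$ is a critical value, yielding a critical point $u_k$ with $J(u_k) = c_k \to +\infty$. The main obstacle, and the step deserving the most care, is the intersection property $\gamma(B_k) \cap S_k \neq \emptyset$ for every admissible odd map, since everything hinges on a correct linking argument between the finite-dimensional odd sphere $\partial B_k \subset Y_k$ and the infinite-codimensional surface $S_k \subset Z_k$; this is where the Lemma~\ref{ftlem} biorthogonal decomposition $E = Y_{k-1} \oplus Z_k$ and the Borsuk--Ulam theorem enter decisively.
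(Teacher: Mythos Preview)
The paper does not actually prove this statement: Theorem~\ref{ft} is merely \emph{recalled} as a known abstract tool, with the sentence ``Now we recall the following Fountain theorem from \cite{alves}'' and references to Bartsch \cite[Theorem~2.5]{bartsch} and Willem \cite[Theorem~3.6]{willem}. There is no proof in the paper to compare against.

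That said, your sketch is essentially the classical argument one finds in Willem's monograph: define the minimax value $c_k$ over odd maps $\gamma\in\Gamma_k$, use the intersection property $\gamma(B_k)\cap S_k\neq\emptyset$ to get $c_k\geq b_k\to+\infty$, and then argue by contradiction via an equivariant deformation lemma under the Cerami condition. This is exactly the route of the cited references, so in that sense your approach matches the intended one. One small caution on the linking step: your appeal to ``Borsuk--Ulam'' for the vanishing of $P_{k-1}\gamma$ on $B_k$ is slightly loose, since Borsuk--Ulam concerns maps on spheres; the clean statement is that the odd map $P_{k-1}\gamma:\partial B_k\to Y_{k-1}$ has odd (hence nonzero) Brouwer degree, so its continuous odd extension to $B_k$ must hit zero, and then one combines this with an intermediate-value argument along a path from that zero to $\partial B_k$ to find a point landing in $Z_k$ with norm exactly $\delta_k$. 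With that clarification, your outline is sound.
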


\begin{proof}[Proof of Theorem \ref{infinite-sol}:] For the reflexive, separable Banach space $E,$ define $Y_k$ and $Z_k$ as in \eqref{not}. Now $J$ satisfies Cerami condition $(C)_c$  for all $c\in \RR,$ thanks to Lemma \ref{bounded}, and $J$ is even. So to prove the Theorem \ref{infinite-sol}, it remains to verify conditions $\rm(B1)-\rm(B2).$\\ 
	\textit{Verification of $\rm(B1):$}
	For $k$ large enough let us denote
	\begin{align}\label{al}
	\al_k=\sup_{u\in Z_k,~\|u\|_{E}=1}\|u\|_{L^{\gamma(\cdot)}(\Om)},\end{align} where ${\gamma}\in C_+(\ol\Om)$ such that $1< \gamma(x)<p_s^*(x)$ for all $x\in\ol\Om.$ Then we have
	\begin{align}\label{ft1}
	\DD\lim_{k\to+\infty}\al_k= 0.
	\end{align} If not, supposing to the contrary, there exist $\epsilon_0>0,k_0\geq0$ and a sequence $\{u_k\}$ in $Z_k$ such that $$\|u_k\|_E=1\text{ and}~~
	\|u_k\|_{L^{\gamma(\cdot)}(\Om)}\geq \epsilon_0 $$
	for all $k\geq k_0.$
	Since $\{u_k\}$ is bounded in $E,$ there exists $u_0\in E$ such that up to a subsequence, still denoted by $\{u_k\},$ we have $u_k\rightharpoonup u_0$ weakly in $E$ as $k\to+\infty$ and
	$$\langle f_j^*,u_0\rangle=\DD\lim_{k\to+\infty}\langle f_j^*,u_k\rangle=0$$  for $j=1,2,3,\cdots.$ Thus we have $u=0.$ Furthermore, using { Theorem \ref{cpt}} we obtain
	$$\epsilon_0\leq\DD\lim_{k\to+\infty}\|u_k\|_{L^{\gamma(\cdot)}(\Om)}=\|u_0\|_{L^{\gamma(\cdot)}(\Om)}=0,$$ which is a contradiction to the fact $\epsilon_0>0.$ Hence \eqref{ft1} holds true. Let $u\in Z_k$ with $\|u\|_E>1.$ By  using Remark \ref{main} {(or Remark \ref{rem-M})} and { Proposition \ref{norm-modular}}, we have
	{\begin{align}\label{ft2}
	J(u)
	\geq \frac{M(1)}{(p^+)^{\theta}}\{\rho(u)\}^{\theta}-\Psi(u)
	\geq\frac{M(1)}{(p^+)^{\theta}}\|u\|_E^{  \theta p^-}-\Psi(u).
	\end{align}} 
	Now \eqref{ft1} infers that $\al_k<1$ for large $k.$ Therefore using $(f1),$ Lemma \ref{lemA1}, {Proposition \ref{HLS}} and  \eqref{al} for sufficiently large $k,$  we get 
	{\begin{align}\label{ft3}
		\Psi(u)&
		\leq C\Big[\Big\{\|u\|_{ L^{q^-}(\Om)}^2+\Big(\|u\|_{ L^{r(\cdot)q^-}(\Om)}^{2r^-}+\|u\|_{ L^{r(\cdot)q^-}(\Om)}^{2r^+}\Big)\Big\}\n&~~~~~~~~~~~~~~~+\Big\{\|u\|_{ L^{q^+}(\Om)}^2+\Big(\|u\|_{ L^{r(\cdot)q^+}(\Om)}^{2r^-}+\|u\|_{ L^{r(\cdot)q^+}(\Om)}^{2r^+}\Big)\Big\}\Big]\n
		&\leq 2C\left\{\|u\|_{E}^2 \al_k^2+\left(\|u\|_{E}^{2r^-}\al_k^{2r^-}+\|u\|_{E}^{2r^+}\al_k^{2r^+}\right)\right\}\leq {\tilde{ C }}\al_k \|u\|_{E}^{2r^+},
		\end{align}}
	where ${\tilde{ C }}$ is a positive constant.
	Thus \eqref{ft2} and \eqref{ft3} give us 
	{\begin{align}\label{ft4}
		J(u)\geq \frac{M(1)}{ (p^+)^\theta}\|u\|_E^{\theta  p^-}-{\tilde{ C }} \al_k \|u\|_{E}^{2r^+}.
		\end{align}}
	Consider the real function $G:\RR\to \RR,$ 
	{$$G(t)= \frac{M(1)}{ (p^+)^\theta}t^{ \theta p^-}-{\tilde{ C }} \al_k t^{2r^+}.$$ }Then from elementary calculus it follows that $G$ attains its maximum at {$$\delta_k={\left(\frac{M(1)\theta p^-}{2r^+(p^+)^\theta{\tilde{ C }}\al_k  }\right)^{1/(2r^+-\theta p^-)}.}$$} Therefore the maximum value of $G$ is given by 
	{\begin{align}
		G(\delta_k)&=\frac{M(1)}{ (p^+)^\theta}\left(\frac{M(1)\theta p^-}{2r^+(p^+)^\theta{\tilde{ C }}\al_k  }\right)^{\theta p^-/(2r^+-\theta p^-)}-{\tilde{ C }}\al_k\left(\frac{M(1)\theta p^-}{2r^+(p^+)^\theta{\tilde{ C }}\al_k  }\right)^{2r^+/(2r^+-\theta p^-)}\n
		&=\left(\frac{M(1)}{ (p^+)^\theta}\right)^{2r^+/(2r^+-\theta p^-)}\left(\frac{1}{{\tilde{ C }} \al_k}\right)^{\theta p^-/(2r^+-\theta p^-)}\left(\frac{\theta p^-}{2r^+}\right)^{\theta p^-/(2r^+-\theta p^-)}\bigg(1-\frac{\theta p^-}{2r^+}\bigg)\nonumber.
		\end{align}}
	Since $\theta p^-<2r^+$ and $\al_k \to 0$ as $k \to +\infty,$ we have 
	{\begin{align}\label{ft5}
	G(\delta_k)\to +\infty \text{~~as~} k\to +\infty.
	\end{align}}
	Again  using \eqref{ft1}, we get $\delta_k\to+\infty$ as $k\to +\infty.$ Thus for $u\in Z_k$ with $\|u\|_E=\delta_k,$  combining \eqref{ft4} and \eqref{ft5}, it readily follows that as $k\to +\infty$
	$$b_k=\DD\inf_{u\in Z_k,\|u\|_E=\delta_k} J(u)\to +\infty.$$
	\textit{Verification of $\rm(B2):$} Due to the presence of Choquard type nonlinearity here we will use an indirect argument. Suppose assertion $(B2)$ of Fountain theorem does not hold true for some given $k.$ Then there exists a sequence $\{u_n\}\subset Y_k$ such that 
	\begin{align}\label{ft6}
	\|u_n\|_E\to +\infty, ~~~~~J(u_n)\geq 0.
	\end{align} 
	Let us take $w_n:=\frac{u_n}{\|u_n\|_E},$ then $w_n\in E$ and $\|w_n\|_E=1.$ Since $Y_k$ is of finite dimension, there exists $w \in Y_k\setminus\{0\}$ such that up to a subsequence, still denoted by $\{w_n\},$ $w_n\to w$  strongly and $w_n(x)\to w(x)$
	a.e. $x\in\RR^N$ as $n \to +\infty. $
	If $w(x)\not=0$ then $|u_n(x)|\to+\infty$ as $n \to +\infty.$
	{Similar to \eqref{c9}, it follows that for each $x\in\Om$
		\begin{align}\label{ft6.0}
		\bigg(\int_{\Om}\frac{F(y,u_n(y))|w_n(y)|^{\gr}}{|x-y|^{\mu(x,y)}|u_n(y)|^{\gr}}dy\bigg
		)\frac{F(x,u_n(x))}{|u_n(x)|^{\gr}}|w_n(x)|^{\gr}\to+\infty .
		\end{align}
		Thus using \eqref{ft6}, \eqref{ft6.0} with Remark \ref{f5} and applying Fatou's lemma, as $\gt$ 
		{\begin{align}\label{ft7}
		\frac{\Psi(u_n)}{\|u_n\|_E^{\theta p^+}}&=\frac{1}{2}\int_{\Om}\Big(\int_{\Om}\frac{F(y,u_n)|u_n(y)|^{\gr}}{|x-y|^{\mu(x,y)}|u_n(y)|^{\gr}}dy\Big
		)\frac{F(x,u_n)}{|u_n(x)|^{\gr}}|w_n(x)|^{\gr}dx\to+\infty.
		\end{align}}}
	Since  $\|u_n\|_E>1$ for large $n$, using  Remark \ref{main} {(or Remark \ref{rem-M})},,  Proposition \ref{norm-modular} and \eqref{ft7}, we deduce as $n\to+\infty$
{	\begin{align*}
	J(u_n)
	\leq  \frac{M(1)}{(p^-)^\theta}{\|u_n\|}^{\theta p^+}-\Psi(u_n)=\bigg(\frac{M(1)}{(p^-)^\theta}-\frac{1}{\|u_n\|_E^{\theta p^+}}\Psi(u_n)\bigg)\|u_n\|_E^{\theta p^+}\to-\infty,
	\end{align*}}
	\noi which contradicts \eqref{ft6}. Thus for sufficiently large $k$  we can have $\varrho_k>\delta_k>0$  such that for $u\in Y_k$ with $\|u\|_E=\varrho_k$ the assertion  $\rm(B2)$ follows.\\
\end{proof}
\subsection{Proof of Theorem \ref{dual-fount-sol}}To prove the Theorem \ref{infinite-sol} we need the  Bartsch-Willem Dual fountain theorem (see \cite[Theorem 3.18]{willem} ).
Since $E$ is reflexive separable Banach space, using Lemma \ref{ftlem} we can define $Y_k$ and $Z_k$ appropriately.
\begin{definition}  We say that $J$ satisfies the $(C)_{c}^{*}$ condition (with respect to $Y_{k}$) if any sequence $\{u_k\}$ in $E$ with $u_{k}\in Y_{k}$ such that
	$$J(u_{k})\to c \text {~~and~~} \|J'_{|_{Y_{k}}}(u_{k})\|_{E^*}(1+\|u_{k}\|_{E})\to 0, \text{~~as~~~} k\to+\infty$$ contains a subsequence converging to a critical point of $J,$ where $E^*$ is the dual of $E.$
\end{definition} 
\begin{theorem}[Dual fountain Theorem]\label{dual}
	Let  $J\in C^1(E,\RR)$  satisfy $J(-u)=J(u).$ If for each $k\geq k_0$ there exist $\varrho_{k}>\delta_{k}>0$ such that
	\begin{itemize}
		\item[$(A_{1})$] $a_{k}=\inf\{J(u):u\in Z_{k},\,\,\|u\|_{E}=\varrho_{k}\}\geq 0;$
		\item[$(A_{2})$] $b_{k}=\sup\{J(u):u\in Y_{k},\,\,\|u\|_{E}=\delta_{k}\}< 0;$
		\item[$(A_{3})$] $d_{k}=\inf\{J(u):u\in Z_{k},\,\,\|u\|_{E}\leq \varrho_{k}\}\to 0$ as $k\to+\infty;$
		\item[$(A_{4})$] $J$ satisfies the $(C)_{c}^*$ condition for every $c\in [d_{k_0},0[.$
	\end{itemize}
	Then $J$ has a sequence of negative critical values converging to $0$.
\end{theorem}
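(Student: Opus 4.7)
The plan is to follow the standard proof of the Bartsch--Willem Dual Fountain Theorem (as in Willem's monograph \cite{willem}), combining Krasnoselskii's genus theory with an equivariant deformation argument tailored to the $(C)_c^*$ condition. For each $k\geq k_0$ I would introduce the min-max family
\[
\Sigma_k := \{A\subset B_{\varrho_k} : A=-A,\ A\ \text{closed in}\ E,\ 0\notin A,\ \gamma(A)\geq k\},
\]
where $B_{\varrho_k}:=\{u\in E:\|u\|_E\leq \varrho_k\}$ and $\gamma$ denotes the Krasnoselskii genus, and define the candidate critical values $c_k := \inf_{A\in\Sigma_k}\ \sup_{u\in A} J(u).$

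Next I would sandwich these levels as $d_k\leq c_k\leq b_k<0$. For the upper bound I would test with $A:=\{u\in Y_k:\|u\|_E=\delta_k\}$, a closed symmetric set of genus exactly $k$ (it is homeomorphic to $S^{k-1}$ since $\dim Y_k=k$) that lies in $B_{\varrho_k}$ because $\delta_k<\varrho_k$; then $(A_2)$ gives $c_k\leq\sup_A J\leq b_k<0$. For the lower bound I would invoke the classical intersection property of genus: since $\mathrm{codim}(Z_k)=k-1<k$, every $A\in\Sigma_k$ meets $Z_k$, and choosing $u\in A\cap Z_k\subset Z_k\cap B_{\varrho_k}$, condition $(A_3)$ yields $\sup_A J\geq J(u)\geq d_k$. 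Together with $(A_3)$, these sandwich bounds immediately force $c_k\to 0$ from below as $k\to\infty$, which already delivers the ``convergence to zero'' part of the conclusion.

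The heart of the proof is to verify that each $c_k$ is actually a critical value of $J$. I would argue by contradiction: assuming $c_k$ is regular, a quantitative equivariant deformation lemma (built from a pseudo-gradient flow, using $(A_4)$ to control compactness of would-be contradicting sequences) produces $\varepsilon\in(0,-b_k)$ and a continuous odd map $\eta:E\to E$ with $\eta(J^{c_k+\varepsilon})\subset J^{c_k-\varepsilon}$ and $\eta=\mathrm{id}$ outside a small neighborhood of the $c_k$-level set. Because $(A_1)$ forces $J\geq 0$ on $\partial B_{\varrho_k}\cap Z_k$ while the flow only disturbs points near the negative level $c_k$, one can arrange $\eta(B_{\varrho_k})\subset B_{\varrho_k}$. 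Picking an almost optimal $A\in\Sigma_k$ with $\sup_A J\leq c_k+\varepsilon$ and noting that $\eta(A)\in\Sigma_k$ by genus invariance under odd continuous maps, one reaches the contradiction $c_k\leq \sup_{\eta(A)} J\leq c_k-\varepsilon$.

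The main obstacle, as expected, is the nonstandard nature of $(A_4)$: the $(C)_c^*$ condition only controls sequences $\{u_n\}$ with $u_n\in Y_n$, not arbitrary Cerami sequences in $E$. Consequently the pseudo-gradient has to be constructed first on each finite-dimensional $Y_m$ for $m\geq k$ (where smoothness and local compactness of $J|_{Y_m}$ come for free), then glued coherently across $m$, and finally $(C)_c^*$ is invoked to pass to the limit and identify accumulation points of the approximating finite-dimensional critical points as honest critical points of the full functional $J$ on $E$. This limiting-and-gluing step, together with the equivariance requirement on the deformation, is the delicate feature that distinguishes the dual version from the ordinary Fountain Theorem. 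Assembling everything, one obtains for each large $k$ a critical point $u_k$ of $J$ with $J(u_k)=c_k\in[d_k,b_k]$; since $d_k\to 0$ by $(A_3)$ and $c_k\leq b_k<0$, the sequence $\{J(u_k)\}$ consists of negative critical values converging to zero, which is the desired conclusion.
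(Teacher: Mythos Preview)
The paper does not actually prove this theorem; it simply cites \cite[Theorem~3.18]{willem} and appends a one-line remark that the deformation lemma underlying Willem's argument remains valid when the $(PS)_c^*$ condition is replaced by the Cerami-type $(C)_c^*$ condition. So there is no proof in the paper to compare your sketch against beyond that citation.

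Your sketch, however, departs from Willem's route and contains a genuine gap. Willem does not run a Krasnoselskii-genus min-max over global families $\Sigma_k\subset B_{\varrho_k}$; he works instead with classes of odd continuous maps defined on the finite-dimensional ball in $Y_k$, coupled with a Borsuk--Ulam type intersection lemma guaranteeing that every admissible image meets the sphere $\{u\in Z_k:\|u\|_E=\varrho_k\}$. That structure is precisely what makes hypothesis $(A_1)$ usable, since $(A_1)$ is only assumed on the sphere \emph{in $Z_k$}, not on all of $\partial B_{\varrho_k}$. In your argument the step ``one can arrange $\eta(B_{\varrho_k})\subset B_{\varrho_k}$'' fails exactly here: $(A_1)$ says nothing about $J$ on $\partial B_{\varrho_k}\setminus Z_k$, so points there may sit at negative energies near $c_k$, be displaced by the flow, and leave the ball. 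Once $\eta(A)\not\subset B_{\varrho_k}$ you lose $\eta(A)\in\Sigma_k$ and the contradiction collapses; you also lose the lower bound $c_k\geq d_k$, since $d_k$ is the infimum only over $Z_k\cap B_{\varrho_k}$. A pure genus scheme would need a barrier on the full sphere $\partial B_{\varrho_k}$, which the Dual Fountain hypotheses do not provide; Willem's map-based min-max is designed to sidestep this obstacle.
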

\begin{remark} Here we would like to mention that  in { \cite{willem}}, assuming that $J$ satisfies $(PS)_c^*$  condition the Dual fountain theorem is proved using Deformation theorem which is still valid under Cerami condition. Therefore we see that like many critical point theorems the Dual fountain theorem is true under $(C)_c^* $ condition.
\end{remark}
\begin{lemma}\label{Cc'} 
	Suppose that the hypotheses in Theorem \ref{dual-fount-sol} hold, then $J$ satisfies the $(C)_{c}^{*}$ condition.
\end{lemma}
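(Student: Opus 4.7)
The plan is to adapt the Cerami-condition argument of Lemma \ref{bounded} to the restricted setting, exploiting the Schauder-type basis structure provided by Lemma \ref{ftlem}. First I would establish that any sequence $\{u_k\}$ with $u_k\in Y_k$, $J(u_k)\to c$ and $(1+\|u_k\|_E)\|J'|_{Y_k}(u_k)\|_{E^*}\to 0$ is bounded in $E$. The key observation is that both $u_k$ and $t_k u_k$ (for $t_k\in[0,1]$) are themselves legitimate test vectors in $Y_k$, so the derivative condition yields $\langle J'(u_k),u_k\rangle = o(1)$ and also $\langle J'(t_k u_k),t_k u_k\rangle = 0$ at the internal maximum $t_k$. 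Hence the computation that led to \eqref{c17}, driven by $(f3)$, $(f4)$, Remark \ref{f5} and Remark \ref{main} (or Remark \ref{rem-M} for $a>0$), transfers verbatim and produces the desired contradiction with the assumption $\|u_k\|_E\to\infty$. The dichotomy $\inf_k\|u_k\|_E=0$ vs.\ $>0$ is handled exactly as in Lemma \ref{bounded}.

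Next I extract a subsequence with $u_k\rightharpoonup u$ weakly in $E$, pointwise a.e., and strongly in $L^{\gamma(\cdot)}(\Om)$ for all $1<\gamma(x)<p_s^*(x)$, via Theorem \ref{cpt}. To show $J'(u)=0$ I would invoke density: for each fixed $m$ and each $v\in Y_m$, one has $v\in Y_k$ for all $k\geq m$, hence $\langle J'(u_k),v\rangle = \langle J'|_{Y_k}(u_k),v\rangle\to 0$. The weak-convergence statements \eqref{01} and \eqref{03} established in the smoothness proof then pass to the limit and yield $\langle J'(u),v\rangle=0$ for every $v\in\bigcup_m Y_m$. Since this union is dense in $E$ by Lemma \ref{ftlem} and $J'(u)\in E^*$ is continuous, we conclude $J'(u)=0$.

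For strong convergence, let $P_k u := \sum_{n=1}^k \langle f_n^*,u\rangle e_n\in Y_k$ denote the canonical projection associated with the biorthogonal system of Lemma \ref{ftlem}, so that $P_k u\to u$ in $E$. Setting $w_k := u_k - P_k u\in Y_k$, the derivative condition applied to this admissible test gives $\langle J'(u_k),w_k\rangle = o(1)$, and combined with $\langle J'(u),u_k-u\rangle=0$ and the $E^*$-boundedness of $\{J'(u_k)\}$ (which follows from boundedness of $\{u_k\}$ in $E$ and the $C^1$ estimates of the smoothness lemma), I would deduce $\langle J'(u_k)-J'(u),u_k-u\rangle=o(1)$. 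From here the proof mirrors Lemma \ref{bounded}: the Hardy--Littlewood--Sobolev estimates of \eqref{c24} and \eqref{c25} kill the Choquard contributions, Remark \ref{main} (or Remark \ref{rem-M}) peels off the Kirchhoff factor $m(\sigma(u_k))$, and the four-region Simon-inequality splitting over $\Delta_1,\tilde\Delta_1,\Delta_2,\tilde\Delta_2$ yields $\rho_E(u_k-u)\to 0$, whence $u_k\to u$ in $E$ by Proposition \ref{norm-convergence}. The main obstacle I foresee lies precisely in this final step: passing from the \emph{restricted} derivative condition on $Y_k$ to a full estimate on $u_k-u$ (which is not in $Y_k$) requires the projection correction $P_k u - u\to 0$ to be absorbed without losing control of the Kirchhoff and Choquard nonlinearities, and this bookkeeping must be done carefully to ensure the Simon-inequality machinery of Lemma \ref{bounded} remains applicable.
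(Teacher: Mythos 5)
Your first half runs parallel to the paper: boundedness of $\{u_k\}$ via the restricted Cerami condition (the observation that $u_k\in Y_k$ and $t_ku_k\in Y_k$ lets the Lemma~\ref{bounded} argument go through), weak convergence to some $u\in E$, and a strong-convergence step based on the Simon inequalities. The genuine gap is in your second step: you attempt to establish $J'(u)=0$ \emph{before} proving strong convergence, by testing against $v\in Y_m\subset Y_k$ and then passing to the limit using the weak-limit facts \eqref{01} and \eqref{03}. Those facts do control $\mathcal{B}_{u_k}(v)\to\mathcal{B}_u(v)$ and the Choquard term under mere weak convergence, but the Kirchhoff multiplier $m(\sigma(u_k))$ does not pass to $m(\sigma(u))$ without $\sigma(u_k)\to\sigma(u)$, i.e.\ without norm convergence of the modular. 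Under weak convergence alone one only gets (along a subsequence) $m(\sigma(u_k))\to m(L)$ for some $L$ possibly different from $\sigma(u)$, so your limit computation yields $m(L)\mathcal{B}_u(v)=$ Choquard term at $u$, not $\langle J'(u),v\rangle=0$. This is precisely why the paper proves strong convergence first, then reads off $J'(u_k)\to J'(u)$ from the $C^1$ regularity of $J$, and only then passes to the limit in $\langle J'|_{Y_k}(u_k),\omega_j\rangle\to 0$. Moreover, as written your strong-convergence step invokes $\langle J'(u),u_k-u\rangle=0$, which presupposes $J'(u)=0$; so the two steps depend on one another. The fix is a pure reordering: note that $\langle J'(u),u_k-u\rangle\to 0$ follows from weak convergence alone, independently of whether $J'(u)=0$; run strong convergence first; conclude $J'(u)=0$ by density at the very end.

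A secondary subtlety: the projection $P_ku=\sum_{n\leq k}\langle f_n^*,u\rangle e_n$ need not converge to $u$ in $E$. Lemma~\ref{ftlem} (Fabian et al.) only guarantees a biorthogonal system with dense spans; convergence of the canonical partial sums would need $\{e_n\}$ to be a Schauder basis, which is a strictly stronger property not asserted here. The paper avoids this entirely by choosing \emph{any} sequence $v_k\in Y_k$ with $v_k\to u$ strongly (using only density of $\cup_kY_k$) and writing $u_k-u=(u_k-v_k)+(v_k-u)$; the first piece lies in $Y_k$, the second tends to zero strongly, and both are handled by the restricted Cerami control together with boundedness of $\{J'(u_k)\}$ in $E^*$. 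With that replacement, and with the reordering above, your proposal becomes essentially the paper's proof.
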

\begin{proof}
	Let $c\in \mathbb{R}$ and the sequence $\{u_k\}$ in $ E$ be such that  $u_{k}\in Y_{k}$ for all $k\in \mathbb{N},$ $J(u_{k})\to c$ and
	$\big\|J'{_{|_{Y_{k}}}}(u_{k})\big\|_{E^*}(1+\|u_{k}\|_{E})  \to 0,$ as 
	$k\to +\infty.$
	Therefore, we have
{	$$c=J(u_{k})+o_{k}(1) \mbox{ and }\langle J'(u_{k}),u_{k} \rangle=o_{k}(1).$$}
	Analogously to the proof of  Lemma \ref{bounded}, we can show that
	$\{u_k\}$ is bounded in $E$. Hence there exists a subsequence, still denoted by  $\{u_k\},$ and $u \in E $ such that $u_{k}\rightharpoonup u$ weakly  in $E$ as $k \to +\infty$. On the other hand, Lemma \ref{ft} implies $E=\overline{\cup_{k}Y_{k}}=\overline{span\{e_{k}:k\geqslant 1\}}$ and thus we can choose $v_{k}\in Y_{k}$ such that $v_{k}\to u$ strongly  in $E$ as $k \to +\infty$.
	Therefore, using the facts $J'{_{|_{Y_{k}}}}(u_k)\to 0$ and  $u_k-v_k\rightharpoonup 0$ in $Y_{k},$ (see {\cite[Proposition 3.5]{brezis}}),  we achieve 
{	\begin{align*}
	\lim_{k\to\infty}	\langle J'(u_k),u_k-u\rangle=\lim_{k\to\infty}\langle J'(u_k),u_k-v_k\rangle+ \lim_{k\to\infty}\langle J'(u_k),v_k-u\rangle=0.
	\end{align*}}Again recalling the proof of Lemma \ref{bounded}, we can deduce $u_k\to u$ strongly in $E$  as $k\to\infty.$	
	Then, we conclude that $J$ satisfies the $(C)_{c}^{*}$ condition. Thus, we obtain that $J'(u_k)\to  J'(u)$ as $k\to +\infty.$ Let us prove $J'(u)=0$. Indeed, taking $\omega_{j}\in Y_{l}$,  for $k\geq j,$ we have
{	\begin{align*}
		\langle J'(u),\omega_{j}\rangle&=	\lim_{k\to+\infty}[\langle J'(u)-J'(u_k),\omega_{j}\rangle+\langle J'(u_k),\omega_{j}\rangle]\n
		&=	\lim_{k\to+\infty}[\langle J'(u)-J'(u_k),\omega_{j}\rangle+\langle J'{_{|_{Y_{k}}}}(u_k),\omega_{j} \rangle]=0.
	\end{align*}}
	Therefore, $J'(u)=0$ in $E^*$  and hence $J$ satisfies the $(C)_{c}^{*}$ condition for every $c\in \mathbb{R}.$
\end{proof}
\begin{proof}[Proof of Theorem \ref{dual-fount-sol}]
	For the reflexive, separable Banach space $E,$ define $Y_k$ and $Z_k$ as in \eqref{not}. $J$ is even and Lemma \ref{Cc'} ensures that $J$ satisfies Cerami condition $(C)_c^*$  for all $c\in \RR.$ So to prove Theorem \ref{dual-fount-sol} it is enough to verify conditions $\rm(A1)-\rm(A3).$\\
	\textit{Verification of $\rm(A1)$:} For all $u\in Z_k$ with $\|u\|_E<1,$ arguing in a similar fashion as \eqref{ft3} we can derive
	{\begin{align}\label{df0}
	\Psi(u)\leq C_4\al_k\|u\|_E
	\end{align}} which together with Remark \ref{main} {(or Remark \ref{rem-M})} and Proposition \ref{norm-modular} imply
	{\begin{align}\label{df1}
		J(u)\geq \frac{M(1)}{(p^+)^\theta}\|u\|^{\theta p^+}_E-C_4\al_k\|u\|_E,
		\end{align}} where $C_4>0$ is some constant. Let us choose $\varrho_k=[(p^+)^\theta C_4 \al_k/{M(1)}]^{1/{(\theta p^+-1)}}.$ Since $\theta p^+>1,$  \eqref{al} yields that \begin{align}\label{df2}
	\varrho_k\to0 \text {~~as~} k\to+\infty.\end{align}
	Thus for $u\in Z_k$ with $\|u\|_E=\varrho_k$ and for sufficiently large $k,$ from \eqref{df1} we get $J(u)\geq0.$ \\
	\textit{Verification of $\rm(A2)$:}  
	{Suppose assertion $(A2)$ of Dual fountain theorem does not hold true for some given $k.$ Then there exists a sequence $\{u_n\}\subset Y_k$ such that 
		\begin{align}\label{df6}
		\|u_n\|_E\to +\infty, ~~~~~J(u_n)\geq 0.
		\end{align} }
	Now arguing similarly as in the proof of assertion $(B2)$ of Theorem \ref{ft}, we get \eqref{ft6.0} and \eqref{ft7} which together with Remark \ref{main}{(or Remark \ref{rem-M})} and  Proposition \ref{norm-modular}  imply that as $\gt$
	{\begin{align*}
	J(u_n)
	&\leq  \frac{M(1)}{(p^-)^\theta}{\|u_n\|}^{\theta p^+}-\Psi(u_n)
	=\bigg(\frac{M(1)}{(p^-)^\theta}-\frac{1}{\|u_n\|_E^{\theta p^+}}\Psi(u_n)\bigg)\|u_n\|_E^{\theta p^+}\to-\infty.
	\end{align*}}
	Hence we get a contradiction to \eqref{df6}.
	Thus there exists $k_0\in\mathbb N$ such that for all $k\geq k_0$  we have $1>\varrho_k>\delta_k>0$  such that for $u\in Y_k$ with $\|u\|_E=\delta_k$ the assertion  $\rm(A2)$ follows.\\
	\textit{Verification of $\rm(A3)$:} Since $Y_k \cap Z_k\not=\emptyset,$ we get 
	$d_k\leq b_k<0.$ Now for $u\in Z_k,$ $\|u\|_E\leq \varrho_k$ using \eqref{df0}, we have
	{\begin{align*}
	J(u)\geq -C_4\al_k\|u\|_E\geq -C_4\al_k\varrho_k.
	\end{align*} }Therefore using \eqref{al} and \eqref{df2}, we obtain 	
	{\begin{align*}
	d_k\geq -C_4\al_k\varrho_k\to 0 \text{~~ as~} k\to\infty.
	\end{align*}} Since $d_k<0,$ we  finally conclude $\DD\lim_{k\to\infty} d_k=0.$ 
	Thus the proof of the theorem is complete.
\end{proof}
\section{Acknowledgement}
\noi Sweta Tiwari is supported by MATRICS grant MTR/2018/000010 funded by 
SERB, India and Reshmi Biswas is supported by research
grant from IIT Guwahati. 
The authors wish to acknowledge Prof. V. D. R\u{a}dulescu for useful comments and valuable suggestions which have helped to improve the presentation.

\end{document}